\newtheorem{theorem}{Theorem}[section]
\newtheorem{lemma}[theorem]{Lemma}
\newtheorem{definition}[theorem]{Definition}
\newtheorem*{question}{Question}
\newtheorem{corollary}[theorem]{Corollary}
\newcommand{\B}{\mathcal{B}}
\newcommand{\PP}{\mathcal{P}}
\newcommand{\C}{\mathcal{C}}
\newcommand{\Sp}{\mbox{Sp}}
\begin{document}

\begin{frontmatter}[classification=text]


\author[kasper]{Kasper Szabo Lyngsie}
\author[martin]{Martin Merker\thanks{Supported by the Danish Council for Independent Research, Natural Sciences, grant DFF-8021-00249, AlgoGraph.}}

\begin{abstract}
We prove that for all natural numbers $m$ and $k$ where $k$ is odd, there exists a natural number $N(k)$ such that any 3-connected cubic graph with at least $N(k)$ vertices contains a cycle of length $m$ modulo $k$. We also construct a family of graphs showing that this is not true for 2-connected cubic graphs if $m$ and $k$ are divisible by 3 and $k\geq 12$.
\end{abstract}
\end{frontmatter}

\section{Introduction}

\subsection{Cycle lengths modulo $k$}

Let $G$ be a graph and let $A$ be a set of natural numbers. Which properties of $G$ and $A$ guarantee the existence of a cycle in $G$ whose length is in $A$? Erd\H{o}s asked many years ago whether there exists a set $A \subset \mathbb{N}$ of density zero and two constants $c_A,n_A$ such that every graph with at least $n_A$ vertices and average degree at least $c_A$ contains a cycle whose length is in $A$, see~\cite{chung}. Verstraëte~\cite{verte} answered this question in the affirmative by showing that any graph with average degree at least 10 contains a cycle whose length is in a prescribed set $A \subset \mathbb{N}$ satisfying $|A \cap \{1,\ldots,n\}|=O(n^{0.99})$. 

Of particular interest in the literature is the case where $A$ is a residue class modulo some natural number $k$. There are many results concerning sufficient conditions for the existence of a cycle of length $m$ modulo $k$ where $m\in\{0,1,2\}$ and $k\in \{3,4\}$, see for example~\cite{deanlesniak, chensaito, mei, saito}.
In 1976, Erd\H{o}s and Burr~\cite{erdos} conjectured that for all natural numbers~$m$ and $k$ where $k$ is odd, there exists a constant $c_k(m)$ such that every graph with average degree at least $c_k(m)$ has a cycle of length $m$ modulo $k$. Note that the restriction to odd natural numbers $k$ is necessary since bipartite graphs contain no cycles of odd length.
Bollobás~\cite{bollobas} showed that $c_k(m)=\frac{2}{k}((k+1)^k-1)$ suffices, proving the conjecture by Erd\H{o}s and Burr. Sudakov and Verstraëte~\cite{sudakov} showed that $c_k(m) = O(mk^{\frac{2}{m}})$.

In 1983, Thomassen~\cite{thomassenpath} conjectured that for all natural numbers $m$ and $k$, every graph of minimum degree at least $k+1$ contains a cycle of length $2m$ modulo $k$. Thomassen showed that minimum degree $4m(k+1)$ suffices. Cai and Shreve~\cite{caishreve} showed that claw-free graphs of minimum degree $k+1$ have cycles of all lengths modulo $k$. Diwan~\cite{diwan} proved that graphs of minimum degree $2k-1$ have cycles of all even lengths modulo $k$ and that Thomassen's conjecture holds for $m=2$. The currently best known result is by Liu and Ma~\cite{liuma} who verified Thomassen's conjecture if $k$ is even and showed that minimum degree $k+4$ suffices if $k$ is odd. 

A commonly used method to show that a graph has cycles of every even length modulo~$k$ is to construct a sequence of cycles whose lengths form an arithmetic progression with difference 2. Bondy and Vince~\cite{bondy} answered a question of Erd\H{o}s by showing that a graph with minimum degree 3 contains two cycles whose lengths differ by 1 or 2. Fan~\cite{fan} showed that graphs of minimum degree $3k-2$ contain $k$ cycles whose lengths form an arithmetic progression with difference 2. A similar result was proved by Verstraëte~\cite{verstarit} who showed that graphs of average degree $8k$ and even girth $g$ contain $(\frac{g}{2}-1)k$ cycles of consecutive even lengths.
Sudakov and Verstraëte~\cite{sudakov2} showed that a graph with average degree $d$ and girth~$g$ has $\Omega(d^{\lfloor \frac{g-1}{2} \rfloor})$ cycles of consecutive even lengths, proving a conjecture by Erd\H{o}s about the minimum number of distinct cycle lengths. Ma~\cite{ma} proved an analogous result about cycles with consecutive odd lengths in non-bipartite 2-connected graphs. Liu and Ma~\cite{liuma} showed that every graph with minimum degree $k+1$ contains $\lfloor \frac{k}{2}\rfloor$ cycles of consecutive even lengths.

Most sufficient conditions for the existence of cycles of length $m$ modulo $k$ require a minimum degree which grows linearly in $k$. The reason for that is that graphs where every block is a clique on at most $k+1$ vertices contain no cycles of length 2 modulo $k$. In this paper we focus on cycles in cubic graphs. Thomassen~\cite{girth} proved that a graph with minimum degree at least 3 and girth at least $2(k^2+1)(3 \cdot2^{k^2+1}+(k^2+1)^2-1)$ contains cycles of all even lengths modulo $k$, which is the strongest known result for the class of cubic graphs. We prove a similar statement for cubic graphs under the mild assumption that the graph is sufficiently large and 3-connected.

\begin{theorem}\label{thm:main}
For every odd natural number $k$, there exists a natural number $N(k)$ such that every 3-connected cubic graph with at least $N(k)$ vertices contains a cycle of length $m$ modulo $k$ for every natural number $m$.
\end{theorem}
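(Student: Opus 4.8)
The plan is to exploit the structure of large 3-connected cubic graphs to find a flexible "gadget" inside the graph that lets us adjust cycle lengths by controlled amounts. The key observation is that in a cubic graph, a long induced path or a long "chain" of subdivided edges gives us cycles whose lengths we can modify one step at a time. So the strategy divides into two cases depending on the presence of long induced paths (equivalently, long chains of degree-$2$ vertices in a topological minor sense). If the graph contains a sufficiently long geodesic or induced path $P$, then together with the rest of the $3$-connected graph we can route cycles through $P$ and through "shortcuts" near its endpoints, producing cycles whose lengths hit every residue modulo $k$. If no long induced path exists, then the graph has bounded "linkage diameter" in a suitable sense, and being large it must contain many disjoint copies of some small configuration; we then use a counting/pigeonhole argument on these configurations.

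\textbf{Key steps.} First I would make precise the reduction: since $k$ is fixed and odd, it suffices to find, for each target residue $m$, a single cycle of that length modulo $k$. Second, I would invoke the fact that a large $3$-connected cubic graph either (a) contains a long induced path, or (b) contains a large "dense" substructure; more usefully, I would try to find a subdivision of a small $3$-connected cubic multigraph $H$ in which at least one branch path is long — this follows because a $3$-connected cubic graph on many vertices, if it has no long induced path, still has unbounded size and hence (by a Ramsey-type or minor-theoretic argument) contains such a subdivision with one long branch. Third, given such a long branch path $B$ of length $\ell$ with endpoints $u,v$, I would use $3$-connectedness to find three internally disjoint $u$–$v$ paths $Q_1, Q_2, Q_3$ in $G - \mathrm{int}(B)$; combining $B$ with each $Q_i$ gives cycles, and more importantly, by rerouting a prefix of $B$ through a short detour provided by a nearby vertex of degree $3$ on $B$ (every internal vertex of $B$ has its third edge leading out), we can change the cycle length by a bounded amount $t$ with $1 \le t \le$ const. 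Fourth, I would show that along the path $B$ these "detour lengths" generate, modulo $k$, a set that contains a nonzero element coprime to... — actually, since $k$ is odd and we can also flip parity by using the non-bipartiteness forced by $3$-connected cubic structure, the attainable differences generate all of $\mathbb{Z}/k\mathbb{Z}$. Concatenating enough detours along the long path $B$ then realizes every residue.

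\textbf{Main obstacle.} The hard part will be the case analysis and, specifically, guaranteeing that the "detour lengths" we extract are not all congruent to $0$ modulo $k$ — a priori every detour could have length a multiple of $k$, which is exactly the obstruction exploited by the $2$-connected counterexamples in the abstract (where blocks force all cycle lengths into a fixed residue class). Overcoming this requires genuinely using $3$-connectivity: I expect to need a lemma saying that in a large $3$-connected cubic graph one cannot have all relevant short detours of length $\equiv 0 \pmod k$, perhaps by finding two detours $D_1, D_2$ attached close together whose lengths differ by less than $k$ but are not both multiples of $k$, or by a local recoloring/rerouting argument near a vertex of degree $3$. Handling this cleanly, together with controlling how large $N(k)$ must be (it should be roughly exponential or tower-type in $k$, comparable to Thomassen's girth bound), will be the technical core of the proof. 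The parity issue — ensuring we can reach both even and odd residues — is handled for free since $k$ is odd, so $2$ is invertible modulo $k$ and any single nonzero generator suffices once we can also add $1$ or $2$ via the Bondy–Vince-type step.
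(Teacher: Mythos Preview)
Your outline has the right large-scale shape---pick a $\theta$-like skeleton with one long leg and try to perturb cycles through it---but two concrete steps fail as written. First, in a cubic graph the endpoints $u,v$ of your branch path $B$ have only two remaining incident edges each once you delete $\mathrm{int}(B)$, so you cannot find \emph{three} internally disjoint $u$--$v$ paths in $G-\mathrm{int}(B)$; at best you recover a $\theta$-graph with $B$ as one of its three legs, which is exactly what the paper starts from. Second, and more seriously, your claim that a ``short detour'' off an internal vertex $w$ of $B$ changes the cycle length by a bounded amount $t$ is unjustified: the third edge at $w$ may lead arbitrarily deep into $G-B$, and nothing forces it to return to $B$ nearby. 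You correctly identify that the whole difficulty is ruling out the situation where every available perturbation is $\equiv 0 \pmod{k}$, but the vague ``local recoloring/rerouting'' you propose is precisely what the 2-connected cross-ladder examples defeat, and you give no argument that 3-connectivity blocks this.

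The paper resolves this obstacle not by bounding arbitrary detours but by manufacturing perturbations whose lengths are forced to be $1$ or $2$. It does this in two ways. One is a $\theta(k,i)$-necklace with $i\in\{1,2\}$: a cyclic chain of little $\theta$-graphs each having a leg of length exactly $1$ or $2$, so that the algebra in Lemma~2.3 runs on $\gcd(k,2i)=1$ rather than on any uncontrolled detour length. The other is a $k$-wiggly necklace: a cyclic chain of 2-connected blocks in each of which one can find two $x$--$y$ paths whose lengths differ by $1$ or $2$. The existence of such paths is the real technical engine (Section~4, extending Fan's theorem via Tutte's non-separating-cycle theorem), and it is exactly the lemma your sketch is missing. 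The elaborate case analysis on endblocks of $G-\Theta^+$ (Section~5) is then what guarantees that one of these two necklace types can always be assembled when $G$ is large enough; this replaces your undeveloped ``dense substructure'' branch.
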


Theorem~\ref{thm:main} does not hold if $k$ is even since bipartite graphs have no cycles of odd length. If $k$ is even and not divisible by 4, then our proofs imply that the set of cycle lengths of every sufficiently large cubic 3-connected graph contains all odd residues or all even residues modulo $k$. It looks plausible that it will always contain all even residues, but our methods are not sufficient to prove it. Similarly, if $k$ is divisible by 4, then we can show that the set of cycle lengths will contain at least a quarter of all residues modulo $k$. To be more precise, for some $i\in \{0,1,2,3\}$, it will contain all residues that are congruent to $i$ modulo 4.\\ 
Theorem~\ref{thm:main} is not true for 3-connected graphs of minimum degree $d$ where $d\geq 3$ and $d<\frac{k}{2}$. To see this, note that for $n\geq d$ the complete bipartite graph $K_{d,n}$ is 3-connected and contains no cycle of length divisible by $k$. However, it might be true for $d$-regular graphs. The methods in this paper have been tailored to cubic graphs, but the general ideas extend to regular graphs. In particular, we believe that the structures we find in large cubic graphs also exist in large $d$-regular graphs. An extension of the proof to $d$-regular graphs might be possible but there appear to be various technical obstacles.\\
We also show that the connectivity condition in Theorem~\ref{thm:main} cannot be lowered to 2-connectivity in general. Given natural numbers $m$, $k$, and $N$ such that $k \geq 12$ and $m$ and $k$ are divisible by 3, we construct a 2-connected cubic graph on at least $N$ vertices which has no cycles of length $m$ modulo~$k$.

\subsection{Proof overview and structure of the paper}

In the proof of Theorem~\ref{thm:main}, we show that in large 3-connected cubic graphs there exists a subgraph of a certain structure which has cycles of every length modulo $k$. The structure we use is necklace-like: it consists of a collection of 2-connected subgraphs which are joined by paths in a cyclic order. The general idea is that in such a structure we can take a cycle and modify it locally inside one of the 2-connected subgraphs to obtain a new cycle which ideally has a different length modulo $k$. If there are sufficiently many 2-connected subgraphs, then we have enough freedom to find all cycle lengths modulo $k$. Of course we need additional assumptions to ensure that there is a local modification which changes the length modulo $k$. We distinguish two types of such necklaces which are defined in Section~2.

The first type of necklace is a so-called $\theta$-necklace. In this case there are several 2-connected subgraphs, each consisting of a cycle and a path of a given length which attaches to the cycle. In this case we cannot bound the difference of the cycle lengths if we change the cycle locally. However, if the length of the path is a power of 2, then we can obtain all residues modulo $k$ for odd $k$ by combining local modifications. We usually show the existence of a $\theta$-necklace by finding a path and a cycle which are joined by many disjoint paths of the same length. We call such a cycle and path \emph{close} since we are mainly interested in the case where the paths have length at most 2. To find a cycle and a path which are close, we can pick a cycle $C$ in $G$ and then try to find a path which contains many neighbours of $C$, or vertices at distance 2. In Section 3 we show that we can find such a path provided $G-C$ has a block containing sufficiently many neighbours of $C$.

In the second type of necklace we require many 2-connected parts where we can change the length of the cycle by 1 or 2. We call such a necklace \emph{wiggly}. In wiggly necklaces we can find a sequence of cycles such that their lengths form an arithmetic progression with difference 1 or 2. For our purposes it is only important that the differences are a power of 2 to make sure that we can obtain all residues modulo $k$ for odd $k$. To show the existence of wiggly necklaces, we need two paths between two given vertices $x$ and $y$ whose lengths differ by 1 or 2. A result by Fan~\cite{fan} implies that such paths exist in a subcubic 2-connected graph if $x$ and $y$ are the only vertices of degree~2. However, in the main proof we need to find these paths when there is also a third vertex of degree 2 (and in some cases even a fourth such vertex). Thus, we need to prove extensions of Fan's results in the case of subcubic graphs, which we do in Section~4.

Section~5 contains the main part of the proof of Theorem~\ref{thm:main}. We start by choosing a subgraph $\Theta$ in $G$ which consists of three internally disjoint paths between two vertices. There are several cases depending on the number of 2-connected endblocks in $G-\Theta$ and how these endblocks attach to $\Theta$. If there are many 2-connected endblocks which have many neighbours on $\Theta$, then we find a $\theta$-necklace. If there are many 2-connected endblocks with only few neighbours on $\Theta$, then we find a wiggly necklace. Finally, if there are only few 2-connected endblocks, then we also find a $\theta$-necklace. For technical reasons the case distinction in Section 5 is slightly different. We distinguish between $\Theta$-isolated endblocks which are endblocks with neighbours on only one of the three paths in $\Theta$ and $\Theta$-connecting endblocks which have neighbours on at least two different paths. 

In Section 6, we show that Theorem~\ref{thm:main} does not hold in general for 2-connected cubic graphs. For $k\geq 12$ and $k$ divisible by 3, we construct an infinite family of 2-connected graphs which do not contain cycles of every length modulo $k$.

\subsection{Notation and preliminaries}

All graphs in this paper are simple and finite unless stated otherwise. We denote the vertex set and edge set of a graph $G$ by $V(G)$ and $E(G)$, respectively. If $H$ is a subgraph of $G$ or $H\subseteq V(G)$, we write $G-H$ or $G-V(H)$ to denote the graph obtained from $G$ by removing all vertices in $H$. We write $G[A]$ to denote the subgraph of $G$ induced by the vertices in $A$. If $v\in V(G)$ and $e\in E(G)$, then $G-v$ and $G-e$ are the graphs obtained from $G$ by removing $v$ and $e$, respectively. We write $d(v)$ and $N(v)$ for the degree and the neighbourhood of $v$, respectively. If $H$ is a subgraph of $G$, then $N_{H}(u)= N(u) \cap V(H)$ and $d_H(u)=|N_H(u)|$. If $A\subseteq V(G)$ or $A$ is a subgraph of a graph $G$, then  $N(A)$ denotes the set of vertices in $G - A$ which have a neighbour in $A$. If $B\subseteq V(G)$ or $B$ is a subgraph of~$G$, then $N_B(A)$ denotes the set of vertices in $B$ which have a neighbour in $A$ and $E(A,B)$ denotes the set of edges having an end in both $A$ and $B$. 
If $H$ is a subgraph of a graph $G$ and $x,y \in V(H)$, then $H+xy$ is the graph obtained from $H$ by adding the edge $xy$.  
A path from a vertex $u$ to a vertex $v$ is called a $u-v$ path and, more generally, if $A$ and $B$ are vertex sets or subgraphs of a graph $G$, then an $A - B$ path is a path having one end in $A$, one end in $B$, and no internal vertices in $A\cup B$. If $x$ and $y$ are vertices on a path~$P$, then $xPy$ denotes the $x-y$ subpath of $P$. If $G$ is connected and $u,v\in V(G)$, then we write $dist(u,v)$ for the length of a shortest $u-v$ path in $G$. 

A \emph{block} in a graph $G$ is a maximal connected subgraph of $G$ without any cut-vertices. Note that a block of a graph $G$ is either a maximal 2-connected subgraph of $G$, a bridge in~$G$ or an isolated vertex. The \emph{block graph} $B(G)$ of a graph $G$ is the bipartite graph whose vertex set consists of the cut-vertices and the blocks in $G$, and whose edges are of the form $vB$ where $v$ is a cut-vertex in $G$ and $B$ is a block containing $v$. If $G$ is connected, then $B(G)$ is a tree. A block in $G$ corresponding to a vertex of degree at most 1 in $B(G)$ is called an \emph{endblock}. We call a 2-edge-cut \emph{non-trivial} if it separates $G$ into two components each containing at least two vertices.

The following lemma is well-known.

\begin{lemma}\label{lem:path_system}
Let $G$ be a connected graph and let $S \subset V(G)$. If $|S|$ is even, then there exist $\frac{|S|}{2}$ pairwise edge-disjoint paths with endvertices in $S$ such that each vertex of $S$ is an endvertex of exactly one of the paths.
\end{lemma}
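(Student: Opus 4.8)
The plan is to reduce to the case where $G$ is a tree and then induct on the number of vertices. First I would replace $G$ by a spanning tree $T$ of it: every path of $T$ is a path of $G$, and paths that are edge-disjoint in $T$ are edge-disjoint in $G$, so it suffices to prove the statement with $T$ in place of $G$. Hence assume from now on that $G=T$ is a tree.

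Next I would induct on $|V(T)|$. If $|V(T)|=1$, then $S\subseteq V(T)$ forces $|S|\le 1$, so $|S|=0$ by parity and there is nothing to prove. For the inductive step, pick a leaf $\ell$ of $T$ and let $u$ be its unique neighbour; then $T'=T-\ell$ is again a tree. If $\ell\notin S$, then $S\subseteq V(T')$ and the induction hypothesis applied to $T'$ and $S$ already gives the required $|S|/2$ paths. If $\ell\in S$, put $S'=S\,\triangle\,\{\ell,u\}$, i.e.\ remove $\ell$ from $S$ and toggle whether $u$ belongs to $S$. Then $S'\subseteq V(T')$, and $|S'|$ equals $|S|$ if $u\notin S$ and $|S|-2$ if $u\in S$; in either case $|S'|$ is even, so the induction hypothesis yields $|S'|/2$ pairwise edge-disjoint paths in $T'$ such that each vertex of $S'$ is an endvertex of exactly one of them.

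It remains to lift this family back to $T$. If $u\notin S$, then $u\in S'$, so $u$ is an endvertex of exactly one path $P$ of the family, say a $u$--$w$ path; since $P\subseteq T'=T-\ell$ we have $\ell\notin V(P)$, so prepending the edge $\ell u$ turns $P$ into an $\ell$--$w$ path, and replacing $P$ by it (keeping all the other paths) gives a valid family for $T$ and $S$. If $u\in S$, then neither $\ell$ nor $u$ lies in $S'$, and I would simply add the one-edge path $\ell u$ to the family; this edge is incident to the leaf $\ell$, which lies on none of the other paths (all contained in $T-\ell$), so edge-disjointness is preserved, and $\ell$ and $u$ each become the endvertex of exactly one path while all other $S$-vertices keep this property. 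In every case we obtain $|S|/2$ pairwise edge-disjoint paths with the required property, closing the induction.

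I do not expect a genuine obstacle here: the only slightly delicate point is the bookkeeping in the lifting step, namely that toggling $u$ keeps $|S'|$ even and that the freshly added edge $\ell u$ is not already used. An alternative route would be to take an edge subset $J\subseteq E(G)$ whose set of odd-degree vertices in $(V(G),J)$ is exactly $S$ (such a $J$ exists since $G$ is connected and $|S|$ is even), and then decompose the subgraph formed by $J$ into paths and cycles and discard the cycles; but extracting honest vertex-simple paths rather than mere trails from such a decomposition is a little fussier, so I would favour the spanning-tree induction above.
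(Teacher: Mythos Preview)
Your proof is correct. Both you and the paper first reduce to a spanning tree, but the core arguments then diverge. The paper takes an extremal approach: in a tree any pairing of the vertices of $S$ determines a unique collection of $|S|/2$ paths, and one simply picks a pairing minimising the total length; if two of the resulting paths shared an edge, swapping endpoints at that edge would strictly decrease the total length, contradicting minimality. Your argument instead builds the path system explicitly by induction on $|V(T)|$, peeling off a leaf and toggling its neighbour in $S$. The paper's route is shorter and gives a one-line proof once the minimality idea is spotted, whereas your induction is more hands-on and essentially yields a recursive algorithm for constructing the paths; both are perfectly valid for this elementary lemma.
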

\begin{proof}
We may assume that $G$ is a tree. Let $\PP$ be a collection of $\frac{|S|}{2}$ paths such that each vertex of $S$ is an endvertex of precisely one of them, and such that sum of the lengths of the paths in $\PP$ is minimal. It follows easily that the paths in $\PP$ are edge-disjoint.
\end{proof}

We will apply Lemma~\ref{lem:path_system} where $G$ is a subcubic graph. Note that in this case any two edge-disjoint paths are also internally vertex-disjoint.

The following classical result by Erd\H{o}s and Szekeres is used several times throughout the paper.

\begin{theorem}[Erd\H{o}s, Szekeres \cite{erdossz}]\label{thm:erdos-szekeres}
Every sequence of at least $(r-1)(s-1)+1$ distinct elements of an ordered set contain an increasing subsequence of length $r$ or a decreasing subsequence of length $s$. 
\end{theorem}

A cycle $C$ in a graph $G$ is called \emph{non-separating} if $G-V(C)$ is connected. In Section~4 the existence of non-separating cycles plays an important role. We use the following theorem by Tutte on non-separating cycles in 3-connected graphs. 

\begin{theorem}[Tutte \cite{tutte}] \label{thm:tutte}
Let $G$ be a graph, $st \in E(G)$, and  $r \in V(G)\setminus\{s,t\}$. If $G$ is 3-connected, then $G$ contains a non-separating induced cycle $C$ such that  $st\in E(C)$ and $r\notin V(C)$.
\end{theorem}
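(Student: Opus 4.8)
The statement is a classical theorem of Tutte, so I will sketch the standard argument, which goes through the theory of \emph{bridges} of a cycle. Recall that a bridge of a cycle $C$ in $G$ is either a chord of $C$ or a component $K$ of $G-V(C)$ together with all edges joining $K$ to $C$; the vertices a bridge meets on $C$ are its \emph{feet}. A cycle $C$ is induced exactly when it has no chord-bridge, and non-separating exactly when $G-V(C)$ has at most one component, i.e.\ when $C$ has at most one non-chord bridge. Hence it suffices to find a cycle through the edge $st$ that avoids $r$ and has at most one bridge in total; call such a cycle \emph{peripheral}. Cycles through $st$ avoiding $r$ do exist, since $G-r$ is $2$-connected (because $G$ is $3$-connected) and still contains the edge $st$, and every edge of a $2$-connected graph lies on a cycle. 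Two further consequences of $3$-connectivity will be needed: every non-chord bridge of a cycle has at least three feet (otherwise its at most two feet would separate its nonempty interior from the rest of $G$), and if a cycle has at least two bridges then some two of them \emph{overlap}, meaning either their feet interleave cyclically around $C$ or they share at least three common feet; moreover a prescribed non-chord bridge can always be taken to be one of an overlapping pair.

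The plan is to choose an extremal cycle and show it is already peripheral. Among all cycles $C$ through $st$ with $r\notin V(C)$, I would pick one for which the component $K$ of $G-V(C)$ containing $r$ has as few vertices as possible, and, subject to that, for which $C$ has as few bridges as possible. Let $D$ be the (unique, non-chord) bridge of $C$ containing $r$. If $D$ is the only bridge of $C$, then $C$ is induced and non-separating, contains $st$, and avoids $r$, so we are done. Otherwise $C$ has a second bridge, and by the overlap fact above $D$ overlaps some bridge $B$ of $C$.

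The final step is to reroute $C$ through $B$ (and possibly through part of $D$) to contradict the extremal choice. Using that $D$ and $B$ overlap, one selects a suitable arc of $C$ — chosen so that the edge $st$ survives — and replaces it by a path whose interior lies in the interior of $B$ (or through $D$), obtaining a new cycle $C'$ that still passes through $st$ and avoids $r$, but for which the component of $G-V(C')$ containing $r$ is strictly smaller than $K$, contradicting the first minimality, or has the same size while $C'$ has strictly fewer bridges, contradicting the second. I expect this rerouting step to be the main obstacle: one must engineer the extremal parameter together with the rerouting operation so that the strict decrease is guaranteed while simultaneously keeping $st$ on the cycle and $r$ off it, and this forces a case analysis according to whether $B$ and $D$ overlap by interleaving feet or by three common feet, and according to where the edge $st$ lies relative to the feet of $B$ and $D$.

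A secondary, smaller obstacle is to justify the overlap fact used above, but this is a short argument: if a non-chord bridge $D$ of a cycle that has at least two bridges overlapped no other bridge, then every other bridge would have all its feet inside a single arc of $C$ between consecutive feet of $D$, and a suitable pair of feet of $D$ (which number at least three) would then form a $2$-separator of $G$, contradicting $3$-connectivity. Combining this with the extremal argument and the rerouting completes the proof.
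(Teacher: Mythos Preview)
The paper does not prove this theorem; it is quoted as a classical result of Tutte and used as a black box in Section~4. So there is no proof in the paper to compare your proposal against.

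That said, your sketch is the standard peripheral-cycle/bridge argument going back to Tutte himself, and the overall architecture is correct: work in the $2$-connected graph $G-r$ to get an initial cycle through $st$, use $3$-connectivity to force at least three feet on every non-chord bridge and to guarantee that the bridge $D$ containing $r$ overlaps some other bridge $B$, then reroute. The one place where your sketch is genuinely incomplete---and you correctly flag it---is the extremal parameter together with the rerouting. With your choice (minimise $|V(K)|$ for the component $K$ containing $r$, then minimise the number of bridges), the danger is that the arc of $C$ you discard may attach to $K$ in $G-V(C')$ and enlarge it; one has to check that the overlap of $B$ with $D$ lets you choose the replaced arc so that its interior has no foot of $D$, which forces the discarded vertices into a bridge of $C'$ disjoint from $r$'s component. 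Most textbook expositions sidestep this by using a different extremal quantity (for instance, maximising the number of edges in the bridge containing $r$, or equivalently minimising the rest of the graph), which makes the monotonicity under rerouting more transparent. Either way the case analysis you anticipate---interleaving feet versus three common feet, and the position of $st$---is exactly what is needed to finish.
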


\section{Necklaces}

In this section we define two types of necklaces: $\theta$-necklaces and wiggly necklaces. We prove sufficient conditions for the existence of cycles of every length modulo $k$ in these necklaces. We begin by defining $\theta$-graphs, which are the building blocks of $\theta$-necklaces.

\begin{definition}[$\theta$-graph]
A \textbf{$\theta$-graph} $\Theta$ consists of two vertices $u$ and $v$ and three $u-v$ paths $P_1$, $P_2$, and $P_3$ which are pairwise internally disjoint. We call $P_1$, $P_2$, and $P_3$ the \textbf{legs} of $\Theta$ and write $\Theta =(P_1,P_2,P_3)$. We also call $\Theta$ a \textbf{$u,v$-$\theta$-graph}.
\end{definition}

Note that if $x$ and $y$ are vertices on different legs of a $\theta$-graph $\Theta$, then $\Theta$ contains four $x-y$ paths. Under the mild assumption that the length of the third path has no common divisors with $k$, this guarantees the existence of $x-y$ paths with different lengths modulo~$k$. A $\theta (k,i)$-necklace consists of $k$ such $\theta$-graphs where the third path has length $i$ and which are connected by paths in a cyclic order.
 
\begin{definition}[$\theta(k,i)$-necklace]\label{def:theta-necklace}
Let $k$ and $i$ be natural numbers. A $\theta(k,i)$-necklace is a connected subcubic graph consisting of $k$ disjoint $\theta$-graphs $\Theta_1$,\ldots ,$\Theta_k$ and $k$ disjoint paths $P_1$, \ldots ,$P_k$ such that
\begin{itemize}
    \item $\Theta_j$ has legs $L_{j,1}$, $L_{j,2}$, $L_{j,3}$ where $L_{j,3}$ is a path of length $i$ for $j\in\{1,\ldots ,k\}$,
    \item $P_j$ is an $L_{j,2}-L_{j+1,1}$ path for $j\in\{1,\ldots ,k-1\}$, $P_{k}$ is an $L_{k,2}-L_{1,1}$ path, and
    \item for every $j,j'\in\{1,\ldots ,k\}$, no interior vertex of $P_j$ is contained in $\Theta_{j'}$.
\end{itemize}
\end{definition}

See Figure~\ref{fig:two_necklaces}(a) for an example of a $\theta(4,2)$-necklace. A graph is a $\theta$-necklace if it is a $\theta (k,i)$-necklace for some natural numbers $k$ and $i$. The following lemma gives a sufficient condition for a $\theta$-necklace to contain cycles of every length modulo $k$. In the main proof we will only construct $\theta (k,i)$-necklaces where $i\in \{1,2\}$.

\begin{lemma}\label{lem:theta_neck_pan_cyc}
Let $m$, $k$, and $i$ be natural numbers and let $G$ be a $\theta(3k^4,i)$-necklace. If $\gcd (k,2i)=1$, then $G$ has a cycle of length $m$ modulo $k$.
\end{lemma}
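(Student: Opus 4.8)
The plan is to start from one distinguished necklace cycle $C_0$ running once around $G$, and then to modify it locally inside many of the $\theta$-graphs so as to shift its length modulo $k$ in a controlled way. Write $n=3k^4$ and let $\Theta_j$ have branch-vertices $u_j,v_j$ and legs $L_{j,1},L_{j,2},L_{j,3}$ with $L_{j,3}$ of length $i$. Let $a_j$ be the endvertex of $P_{j-1}$ on $L_{j,1}$ and $b_j$ the endvertex of $P_j$ on $L_{j,2}$ (indices read modulo $n$). Since $G$ is subcubic and $u_j,v_j$ already have degree $3$ within $\Theta_j$, we have $a_j,b_j\notin\{u_j,v_j\}$, so each of $a_j,b_j$ is an internal vertex of its leg; let $\alpha_j$ and $\ell_{j,1}$ be the lengths of the $u_j$--$a_j$ subpath of $L_{j,1}$ and of $L_{j,1}$ itself, and define $\beta_j,\ell_{j,2}$ analogously from $b_j$ and $L_{j,2}$. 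In $\Theta_j$ there are exactly four $a_j$--$b_j$ paths; I will call the route $a_j\to u_j\to b_j$ (along $L_{j,1}$, then along $L_{j,2}$) the $A$-route, and denote by $B$, $C$, $D$ the routes $a_j\to u_j\to v_j\to b_j$ using $L_{j,3}$ in the middle, $a_j\to v_j\to b_j$, and $a_j\to v_j\to u_j\to b_j$ using $L_{j,3}$ in the middle, respectively.

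First I would record the elementary length computations. If in $\Theta_j$ we replace the $A$-route by the $B$-, $C$-, or $D$-route, the length of any necklace cycle through $\Theta_j$ changes by $\delta^B_j=i+\ell_{j,2}-2\beta_j$, $\delta^C_j=\ell_{j,1}+\ell_{j,2}-2\alpha_j-2\beta_j$, or $\delta^D_j=i+\ell_{j,1}-2\alpha_j$, respectively, and these satisfy the identity $\delta^B_j-\delta^C_j+\delta^D_j=2i$. Modulo $k$ the three quantities depend on $j$ only through the pair $(x_j,y_j):=(\ell_{j,2}-2\beta_j,\ \ell_{j,1}-2\alpha_j)\in(\mathbb{Z}/k\mathbb{Z})^2$, since $\delta^B_j\equiv i+x_j$, $\delta^C_j\equiv x_j+y_j$ and $\delta^D_j\equiv i+y_j\pmod k$. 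There are only $k^2$ possible pairs, so by the pigeonhole principle some pair $(x,y)$ is attained by a set $J$ of at least $\lceil n/k^2\rceil=3k^2$ indices. From now on we only alter the $\theta$-graphs $\Theta_j$ with $j\in J$.

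Next comes the arithmetic heart of the argument. In $\mathbb{Z}/k\mathbb{Z}$ the subgroup generated by $\{\,i+x,\ x+y,\ i+y\,\}$ contains $(i+x)-(x+y)+(i+y)=2i$, and $\gcd(2i,k)=1$, so this subgroup is all of $\mathbb{Z}/k\mathbb{Z}$. Let $C_0$ be the necklace cycle that uses the $A$-route in every $\Theta_j$ and put $\Lambda=|C_0|$. Then we can write $m-\Lambda\equiv n_B(i+x)+n_C(x+y)+n_D(i+y)\pmod k$ for some integers $n_B,n_C,n_D$; replacing each of them by its residue in $\{0,1,\dots,k-1\}$ leaves the congruence intact (each coefficient changes by a multiple of $k$) and ensures $n_B+n_C+n_D\le 3(k-1)<3k^2\le|J|$. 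Choose pairwise disjoint subsets $J_B,J_C,J_D$ of $J$ with $|J_B|=n_B$, $|J_C|=n_C$, $|J_D|=n_D$, and let $C$ be obtained from $C_0$ by switching $\Theta_j$ to its $B$-route for $j\in J_B$, to its $C$-route for $j\in J_C$, and to its $D$-route for $j\in J_D$. Since every such switch stays inside a single $\theta$-graph, the $\Theta_j$ are pairwise disjoint, and no $P_j$ has an interior vertex in any $\Theta_{j'}$, the result $C$ is again a single cycle, and $|C|=\Lambda+\sum_{j\in J_B}\delta^B_j+\sum_{j\in J_C}\delta^C_j+\sum_{j\in J_D}\delta^D_j\equiv \Lambda+n_B(i+x)+n_C(x+y)+n_D(i+y)\equiv m\pmod k$, as required. (The degenerate case $k=1$ is trivial.)

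The conceptual steps are short, and most of the work is bookkeeping: verifying that the three substitutions really do produce $a_j$--$b_j$ paths and that performing several of them in disjoint $\theta$-graphs yields an honest cycle --- this is exactly where the necklace axioms, and subcubicity (which prevents the attachment points $a_j,b_j$ from coinciding with $u_j$ or $v_j$ and thereby keeps all four routes genuine paths), are needed --- and checking the three length formulas together with the identity $\delta^B_j-\delta^C_j+\delta^D_j=2i$. The one genuinely non-trivial ingredient, and the reason the hypothesis is stated as $\gcd(2i,k)=1$, is the short group-theoretic observation that this forces $\{i+x,x+y,i+y\}$ to generate $\mathbb{Z}/k\mathbb{Z}$; that is what converts a bounded supply of local moves into a cycle of every residue length modulo $k$. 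I expect no deep obstacle here --- the main care needed is in the combinatorial verification that the modified subgraph is indeed a cycle.
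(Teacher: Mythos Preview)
Your proof is correct and follows essentially the same strategy as the paper's: fix a base necklace cycle, pigeonhole to find many $\Theta_j$ whose three route-differences agree modulo $k$, then combine local switches so that the net change hits the desired residue via the identity yielding $2i$. The only cosmetic differences are that you pigeonhole on the pair $(x_j,y_j)\in(\mathbb{Z}/k\mathbb{Z})^2$ (noting that the three $\delta$'s are determined by two parameters), whereas the paper pigeonholes directly on the triple of differences in $(\mathbb{Z}/k\mathbb{Z})^3$; and you phrase the final step as ``$\{i+x,x+y,i+y\}$ generates $\mathbb{Z}/k\mathbb{Z}$'' while the paper makes the explicit choice $\lambda_1\equiv\lambda_2\equiv-\lambda_3$ to isolate the contribution $2i\lambda$.
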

\begin{proof}
Let $\Theta_j$, $L_{j,1}$, $L_{j,2}$, $L_{j,3}$, and $P_j$ (for $j\in \{1,\ldots ,3k^4\}$) be as in Definition~\ref{def:theta-necklace}. For $j\in$~$\{1,\ldots ,3k^4\}$, let $u_j$, $v_j$ denote the vertices in $L_{j,1}\cap L_{j,2}\cap L_{j,3}$ and let $w_{j,1}\in V(L_{j,1})$ and $w_{j,2}\in V(L_{j,2})$ denote the ends of $P_{j-1}$ and $P_{j}$, respectively. Let $a_j,b_j,c_j,d_j$  be integers such that $|E(w_{j,1}L_{j,1}u_j)|=a_j$, $|E(u_jL_{j,2}w_{j,2})|=b_j$, $|E(w_{j,1}L_{j,1}v_j)|=c_j-i$, and $|E(v_jL_{j,2}w_{j,2})|=d_j-i$. Note that $\Theta_j$ contains four $w_{j,1}-w_{j,2}$ paths whose lengths are $a_j+b_j$, $a_j+d_j$, $c_j+b_j$, and $c_j+d_j-2i$.\\ 
Let $C$ be the cycle in $G$ obtained by taking the union of all the paths $P_j$, $w_{j,1}L_{j,1}u_j$, and $u_jL_{j,2}w_{j,2}$ for $j \in \{1,\ldots,3k^4\}$. We can modify $C$ by replacing the path within $\Theta_j$ by a different $w_{j,1}-w_{j,2}$ path, which increases the length of $C$ by $d_j-b_j$, $c_j-a_j$, or $c_j+d_j-2i-a_j-b_j$. 
By the pigeonhole principle there are at least $3k$ indices $j$ for which the 3-tuples $(d_j-b_j, c_j-a_j, c_j+d_j-2i-a_j-b_j)$ are equal modulo $k$. Thus, there exists a subset $I \subset \{1,\ldots,3k^4\}$ with $|I| \geq 3k$ and integers $a,b,c,d$, such that for $j\in I$ we have
\begin{align*}
    d_j-b_j & \equiv d-b & \mod{k}\\
    c_j-a_j & \equiv c-a & \mod{k}\\
    c_j+d_j-2i-a_j-b_j & \equiv c+d-2i-a-b & \mod{k}
\end{align*}
By possibly modifying the cycle $C$ inside each $\Theta_j$ for $j \in I$ we can obtain all cycle lengths of the form $|E(C)|+\lambda_1(d-b)+\lambda_2(c-a)+\lambda_3(c+d-2i-a-b) \mod{k}$ where $\lambda_1$, $\lambda_2$, $\lambda_3$ are non-negative integers with $\lambda_1 +\lambda_2  +\lambda_3 \leq 3k$. For every integer $\lambda$, we can obtain all cycle lengths of the form $|E(C)|+2i\lambda \mod{k}$ by choosing $\lambda_1, \lambda_2, \lambda_3 \in \{0,\ldots,k-1\}$ such that $\lambda_1 \equiv \lambda_2 \equiv \lambda \equiv -\lambda_3 \mod{k}$. Since $\gcd (k,2i)=1$ this implies that $G$ contains cycles of length $m$ modulo $k$ for every integer $m$.
\end{proof}

\begin{figure}[t]
    \centering
      \begin{subfigure}[b]{0.37\textwidth}
        \includegraphics[width=\textwidth]{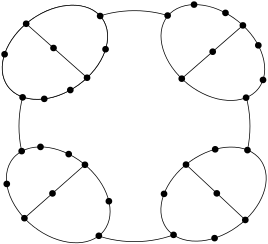}
        \caption{A $\theta(4,2)$-necklace}
        \label{fig:theta2neck}
    \end{subfigure}
    \hspace{2cm}
    \begin{subfigure}[b]{0.41\textwidth}
        \includegraphics[width=\textwidth]{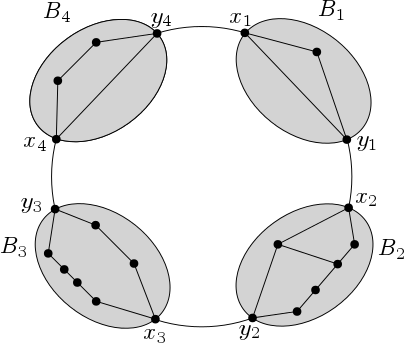}
        \caption{A 4-wiggly necklace}
        \label{fig:wigglyneck}
    \end{subfigure}
    \caption{Two types of necklaces}
    \label{fig:two_necklaces}
\end{figure}

If $k$ is even, then the proof above still shows that for a $\theta(3k^4,i)$-necklace $\Theta$, there is a natural number $c$ such that for every integer $\lambda$ there is a cycle of length $c + 2i\lambda$ modulo $k$ in $\Theta$. However, now this set will not cover all residues. If $i=1$ then the set will contain either all even or all odd residues modulo $k$. This is also the case if $i=2$ and $k$ is not divisible by 4. If $i=2$ and $k$ is divisible by 4, then the set will contain only those residues that are in the same residue class modulo 4 as $c$.\\
As can be seen below, one way to find a $\theta(k,1)$-necklace or a $\theta(k,2)$-necklace inside a cubic graph $G$ is to find a cycle and a path which are disjoint and have many disjoint paths of length at most 2 connecting them. This motivates the following definition.

\begin{definition}[$k$-close]
Two subgraphs $H_1$ and $H_2$ of a graph $G$ are \textbf{$k$-close} if $H_1$ and $H_2$ are disjoint and there exist $k$ pairwise disjoint $H_1-H_2$ paths of length at most 2.
\end{definition}

Sometimes we will slightly abuse this notation by saying that $P_1$ is $k$-close to $P_2\cup P_3$ where $P_1$, $P_2$, and $P_3$ are the legs of a $\theta$-graph. Note that in this case the subpath induced by the interior vertices of $P_1$ is $k$-close to $P_2\cup P_3$.

\begin{lemma}\label{lem:kclose}
If a graph contains a cycle and a path which are $18k^2$-close, then it contains a $\theta(k,1)$-necklace or a $\theta(k,2)$-necklace.
\end{lemma}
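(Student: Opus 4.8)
The plan is to start with a cycle $C$ and a path $Q$ that are $18k^2$-close, and to extract from the $18k^2$ connecting paths of length at most $2$ a well-structured subcollection that can be arranged into a necklace. First I would split the connecting paths into those of length $1$ and those of length $2$; by pigeonhole at least $9k^2$ of them have the same length $i\in\{1,2\}$, so from now on all connecting paths under consideration have length exactly $i$. Fixing an orientation of $Q$, each connecting path meets $Q$ in an endpoint (if $i=1$) or in an endpoint together with possibly an interior vertex — but since $G$ is subcubic and the paths are disjoint, the attachment points on $Q$ are distinct vertices, so the connecting paths inherit a linear order from $Q$. They also attach to $C$ at distinct vertices because of subcubicity and disjointness.

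Next I would use this linear order on $Q$ together with the cyclic order on $C$: reading the attachment points on $C$ in the order induced by $Q$ gives a cyclic sequence, and by the Erdős–Szekeres theorem (Theorem~\ref{thm:erdos-szekeres}) — applied after cutting the cycle $C$ at one point to linearize it — a subcollection of size roughly $\sqrt{9k^2}=3k$ of the connecting paths attaches to $C$ in a monotone (say increasing) order matching their order on $Q$. This monotonicity is exactly what prevents the pieces we cut out from crossing each other. Relabel these $3k$ connecting paths as $R_1,\dots,R_{3k}$ in their common order, with $R_j$ running from $x_j\in V(Q)$ to $y_j\in V(C)$.

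Now I would build the $\theta$-graphs: for each $j$, the arc of $Q$ between $x_j$ and $x_{j+1}$, the arc of $C$ between $y_j$ and $y_{j+1}$, and the two connecting paths $R_j$ and $R_{j+1}$ together form a $\theta$-graph once we contract suitably — more precisely, taking $u=x_j$, $v=x_{j+1}$, with $P_1$ the $Q$-arc, $P_2$ the path $R_j$ followed by the $C$-arc followed by $R_{j+1}$ (length $2i+\ell_j$ where $\ell_j$ is the $C$-arc length), and we still need the short third leg $L_{j,3}$ of length $i$. To get the third leg I instead use a slightly different decomposition: pair up $R_{2j-1}$ and $R_{2j}$ so that their endpoints on $C$ cut out a $C$-arc that, together with the short segment of $Q$ between their $Q$-endpoints, gives a cycle, and then $R_{2j-1}$ (of length $i$) becomes a chordal path attaching to that cycle — i.e. precisely a $\theta$-graph with $L_{j,3}$ of length $i$. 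Consecutive such $\theta$-graphs are joined along $Q$ (and along $C$) by the leftover arcs, which serve as the connecting paths $P_j$ of Definition~\ref{def:theta-necklace}; the monotone ordering guarantees these arcs are internally disjoint from all the $\theta$-graphs and from each other, and that the whole object closes up cyclically. Using $\lfloor 3k/2\rfloor\ge k$ (after adjusting constants, $9k^2$ gives more than enough room) pairs yields a $\theta(k,i)$-necklace with $i\in\{1,2\}$, which is what we wanted.

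The main obstacle I expect is the bookkeeping in the second and third steps: ensuring that the arcs of $C$ and of $Q$ cut out by consecutive connecting paths are genuinely internally disjoint and that nothing forces two of the $\theta$-graphs to share a vertex. Subcubicity of $G$ is doing real work here — it makes all the attachment points distinct — but one must be careful that a connecting path of length $2$ does not have its middle vertex coincide with an attachment point or an arc of another connecting path; again disjointness of the chosen paths handles this, provided the Erdős–Szekeres extraction is applied to keep the order consistent on both $C$ and $Q$ simultaneously. The constant $18k^2$ is chosen exactly so that after the two pigeonhole-type reductions (length $i$, then monotonicity via $(3k-1)(3k-1)+1\le 9k^2$) we are left with at least $3k$ usable connecting paths, hence at least $k$ $\theta$-graphs. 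Once the necklace is built, Lemma~\ref{lem:theta_neck_pan_cyc} is not needed here — we only claim the existence of the necklace — so the proof ends with the construction.
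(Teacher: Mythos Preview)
Your overall strategy --- pigeonhole on the length $i\in\{1,2\}$, then Erd\H{o}s--Szekeres to extract $3k$ connecting paths $R_1,\dots,R_{3k}$ monotone on both $Q$ and $C$ --- is exactly the paper's approach, and the constant $9k^2\ge(3k-1)^2+1$ is used for precisely the reason you state. The gap is in your construction of the individual $\theta$-graphs.

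You propose to pair $R_{2j-1}$ with $R_{2j}$ and claim that the $C$-arc and $Q$-arc between their attachment points ``gives a cycle'' with $R_{2j-1}$ as a chordal path of length $i$. But the $C$-arc (from $y_{2j-1}$ to $y_{2j}$) and the $Q$-arc (from $x_{2j-1}$ to $x_{2j}$) share no endpoints; to close them into a cycle you must use \emph{both} $R_{2j-1}$ and $R_{2j}$. Once $R_{2j-1}$ lies on that cycle it cannot also serve as the third leg $L_{j,3}$. Two connecting paths per $\theta$-graph are simply not enough: you need a third to supply the short leg of length $i$.

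The fix is to group in triples rather than pairs. Take $R_{3j-2},R_{3j-1},R_{3j}$; form the cycle $C_j$ from $R_{3j-2}$, the $Q$-arc $x_{3j-2}Qx_{3j}$, $R_{3j}$, and the $C$-arc back from $y_{3j}$ to $y_{3j-2}$. Now $R_{3j-1}$ is a genuine chord of $C_j$ of length $i$, with one end on the $Q$-arc and the other on the $C$-arc (monotonicity guarantees $x_{3j-1}$ and $y_{3j-1}$ lie on the correct arcs). This produces exactly $k$ pairwise disjoint $\theta$-graphs from the $3k$ monotone connecting paths, and the remaining arcs of $C$ link them cyclically into a $\theta(k,i)$-necklace. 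This is what the paper does, and it explains why the target after Erd\H{o}s--Szekeres is $3k$ rather than $2k$.

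A minor point: you appeal to subcubicity of $G$ to obtain distinct attachment points, but the lemma carries no such hypothesis. The pairwise disjointness of the $18k^2$ connecting paths already forces their endpoints on $C$ and on $Q$ to be distinct, so subcubicity is not needed here.
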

\begin{proof}
Let $G$ be a graph containing a path $P=p_1p_2 \ldots p_n$ and a cycle $C=c_1c_2 \ldots c_mc_1$ which are $18k^2$-close. Let $C'=C-c_mc_1$. Either $|E(P,C)|\geq 9k^2$ or $P$ and $C$ are joined by at least $9k^2$ disjoint paths of length 2. Suppose $|E(P,C)|\geq 9k^2$. Define a total ordering of $V(P)$ by $p_i \leq p_j$ if and only if $i \leq j$ and a total ordering of $V(C)$ by $c_i \leq c_j$ if and only if $i \leq j$.
Given an edge $e$ between $P$ and $C$ let $p(e)$ denote the end of $e$ on $P$ and let $c(e)$ denote the end of $e$ on $C$. Let $e_1,\ldots,e_{9k^2}$ be distinct edges between $P$ and $C$ such that $p(e_i) \leq p(e_j)$ whenever $i \leq j$. Let $S=(c(e_1),c(e_2),\ldots,c(e_{9k^2}))$. Note that $9k^2 \geq (3k-1)^2+1$ so by Theorem~\ref{thm:erdos-szekeres} $S$ has a subsequence $S'=(c(e'_1),\ldots,c(e'_{3k}))$ of length $3k$ which is increasing or decreasing.
For each $i \in \{1,\ldots,k\}$, let $C_i$ be the cycle obtained from the union of the paths $p(e'_{3i-2})Pp(e'_{3i})$ and $c(e'_{3i-2})C'c(e'_{3i})$ together with the edges $e'_{3i-2}$ and $e'_{3i}$ Note that each $C_i$ has a chord $e'_{3i-1}$. Now the union of the $\theta$-graphs $C_i+e_{3i-1}'$ and the cycle $C$ forms a $\theta(k,1)$-necklace.\\
If $P$ and $C$ are joined by at least $9k^2$ disjoint paths of length 2 the proof is analogous. The only difference is that the chords of the cycles $C_i$ are now subdivided once. Thus, we obtain a $\theta(k,2)$-necklace in this case. 
\end{proof}

The following definition is similar to that of a $\theta$-necklace but allows more general building blocks.

\begin{definition}[$\mathcal{B}$-necklace, $k$-wiggly necklace]\label{def:wigglynecklace}
Let $k$ and $\ell$ be natural numbers. Let $\mathcal{B} = \{B_1,\ldots ,B_\ell\}$ be a collection of 2-connected pairwise disjoint subgraphs of a graph $G$. A \textbf{$\mathcal{B}$-necklace} $N$ is a subgraph of $G$ consisting of $B_1$,\ldots ,$B_\ell$ and $\ell$ pairwise disjoint paths $P_1$, \ldots ,$P_\ell$ such that 
\begin{itemize}
    \item $P_i$ is a $B_i-B_{i+1}$ path for $i\in\{1,\ldots ,\ell-1\}$, $P_{\ell}$ is a $B_\ell-B_1$ path, and
    \item for every $i,j\in\{1,\ldots ,\ell\}$, no interior vertex of $P_i$ is contained in $B_j$.
\end{itemize}
For $i\in \{1,\ldots ,\ell\}$, let $x_i$, $y_i$ denote the two vertices of $B_i$ which are contained in some path $P_j$. We say $N$ is \textbf{$k$-wiggly} if there exists $I\subseteq \{1,\ldots ,\ell\}$ with $|I|=k$ such that $B_i$ contains two $x_i-y_i$ paths whose lengths differ by 1 or 2 for every $i\in I$.
\end{definition}

Note that the two paths inside each $B_i$ are not necessarily disjoint. For example, $B_2$ in the 4-wiggly necklace in Figure~\ref{fig:two_necklaces}(b) contains $x_2-y_2$ paths of lengths 2, 4, and 5, but no two disjoint $x_2-y_2$ paths whose lengths differ by 1 or 2.\\
In a $k$-wiggly necklace there exists a cycle which we can modify locally in $k$ places so that each time its length increases by 1 or 2. We use this to show that $2k$-wiggly necklaces contain cycles of every length modulo $k$.

\begin{lemma}\label{lem:wiggly_pan_cyc}
Let $k$ and $m$ be natural numbers with $k$ odd. If a necklace is $2k$-wiggly, then it has a cycle of length $m$ modulo $k$.
\end{lemma}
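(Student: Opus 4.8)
The plan is to mimic the structure of the proof of Lemma~\ref{lem:theta_neck_pan_cyc}, but using the ``wiggle moves'' of a $2k$-wiggly necklace in place of the $\theta$-graph modifications. First I would fix the notation from Definition~\ref{def:wigglynecklace}: let $B_1,\ldots,B_\ell$ be the $2$-connected blocks, let $P_1,\ldots,P_\ell$ be the connecting paths, let $x_i,y_i$ be the two attachment vertices of $B_i$, and let $I$ be the index set of size $2k$ such that for each $i\in I$ there are two $x_i$--$y_i$ paths in $B_i$ whose lengths differ by $1$ or $2$. For every $i$ (whether or not $i\in I$) pick one $x_i$--$y_i$ path $Q_i$ in $B_i$; the union of the $Q_i$ with all the $P_j$ is a cycle $C$ in the necklace. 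The key observation is that for each $i\in I$ we may replace $Q_i$ by the other $x_i$--$y_i$ path in $B_i$ without touching any other part of $C$ (the blocks and the interiors of the $P_j$ are pairwise disjoint), and this changes the length of the cycle by $\pm 1$ or $\pm 2$; call the absolute value of this change $\delta_i\in\{1,2\}$.

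Next I would split $I$ according to the value of $\delta_i$. Since $|I|=2k$, at least $k$ of the indices in $I$ share the same value $\delta\in\{1,2\}$; call this subset $I'$, so $|I'|\ge k$. For each $i\in I'$ the local modification changes $|E(C)|$ by $\pm\delta$ (the sign may depend on $i$, but that does not matter: doing nothing or performing the swap at $B_i$ gives two cycle lengths differing by exactly $\delta$). Performing the swap on an arbitrary subset $J\subseteq I'$ of these blocks yields a cycle whose length equals $|E(C)| + \delta\cdot(\text{number of }+\text{-type swaps}) - \delta\cdot(\text{number of }-\text{-type swaps})$; by choosing $J$ appropriately the net change ranges over $\{-\delta\,|I'|,\ldots,\delta\,|I'|\}$ in steps of $\delta$, and in particular over $\{0,\delta,2\delta,\ldots,\delta k\}$ — more carefully, over every multiple $\delta t$ with $0\le t\le k$ (relabel so that, say, at least $\lceil |I'|/2\rceil$ swaps are of one sign, or simply note $|I'|\ge k$ gives us $k$ independent $\pm\delta$ increments, which already covers $\{0,\delta,\ldots,\delta k\}$ by taking between $0$ and $k$ of them all in the direction that increases the length — but if some are forced to decrease, note we can always cancel: it suffices that among the $\ge k$ blocks in $I'$ we can realize every residue). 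Thus the set of attainable cycle lengths contains $\{\,|E(C)| + \delta t : 0\le t\le k\,\}$.

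Finally, reading these lengths modulo $k$: as $t$ runs over $0,1,\ldots,k-1$ the values $\delta t \bmod k$ run over all of $\mathbb{Z}/k\mathbb{Z}$, because $\delta\in\{1,2\}$ and $k$ is odd, so $\gcd(\delta,k)=1$ and $\delta$ is invertible modulo $k$. Hence the attainable cycle lengths already cover every residue class modulo $k$, and in particular there is a cycle of length $m$ modulo $k$.

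I expect the only genuinely delicate point to be the bookkeeping in the middle step: the individual wiggle moves may increase \emph{or} decrease the base cycle length, so one must argue that with $\ge k$ independent moves of common step size $\delta$ one can still realize the full arithmetic progression of length $k{+}1$ modulo $k$. This is handled by observing that we are free to choose, for each $i\in I'$, whether to perform the swap, giving $2^{|I'|}$ cycles whose lengths form a set of the shape $L_0 + \delta S$ where $S$ is a set of integers containing a run of $k{+}1$ consecutive values (e.g. $\{0,1,\dots,k\}$ after translating by the number of ``negative'' moves); reducing modulo $k$ then gives everything since $\delta$ is a unit mod $k$. No heavy machinery is needed beyond the pigeonhole principle and this invertibility observation; the rest is parallel to Lemma~\ref{lem:theta_neck_pan_cyc}.
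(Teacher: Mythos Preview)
Your proposal is correct and follows essentially the same approach as the paper: pigeonhole on the difference $\delta\in\{1,2\}$ to get at least $k$ wiggly blocks with the same step size, then observe that the resulting arithmetic progression of $k{+}1$ cycle lengths with common difference $\delta$ covers all residues modulo $k$ since $\gcd(\delta,k)=1$. The one place where the paper is cleaner is that it avoids your sign worry entirely by \emph{choosing} the base cycle $C$ to use the shorter $x_i$--$y_i$ path in each $B_i$; then every swap increases the length by exactly $\delta$, so the attainable lengths are simply $|E(C)|,|E(C)|+\delta,\ldots,|E(C)|+k\delta$ with no bookkeeping about positive versus negative moves.
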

\begin{proof}
Let $G$ be a $2k$-wiggly necklace with 2-connected subgraphs $B_0,\ldots,B_{2k-1}$ and paths $P_0,\ldots,P_{2k-1}$ as in Definition~\ref{def:wigglynecklace}. For each $j\in \{0,\ldots ,2k-1\}$, let $P_{j,1}, P_{j,2}$ be two $P_{j-1}-P_j$ paths in $B_j$ such that $|E(P_{j,2})| - |E(P_{j,1})|\in\{1,2\}$. Let $C$ be the cycle consisting of the union of all paths $P_j$ and $P_{j,1}$ for $j \in \{0,\ldots,2k-1\}$. For each $B_j$, we can modify $C$ by replacing the path $P_{j,1}$ by $P_{j,2}$ which increases the length of the cycle by $x_j$ with $x_j \in \{1,2\}$. There are at least $k$ indices $j$ for which the values $x_j$ are the same. Thus we can obtain cycles of all lengths in $\{|E(C)|, |E(C)|+x,\ldots,|E(C)|+kx\}$ for some $x \in \{1,2\}$. Since $\gcd(k,x)=1$, this set contains all residues modulo k.
\end{proof}

If $k$ is even and $x=2$, then the set $\{|E(C)|, |E(C)|+x,\ldots,|E(C)|+kx\}$ at the end of the proof contains either all even or all odd residues modulo $k$. Thus, if $k$ is even we can only guarantee that at least half of all possible residues modulo $k$ occur in a $2k$-wiggly necklace.\\
To simplify the calculations in the main proof we introduce the following definition. 

\begin{definition}[$k$-good]
A graph $G$ is called \textbf{$k$-good} if it contains at least one of the following graphs as a subgraph: 
\begin{itemize}
    \item a $\theta (k,i)$-necklace with $i\in \{1,2\}$,
    \item a $k$-wiggly necklace, or
    \item a path and a cycle which are $k$-close.
\end{itemize}
\end{definition}

Note if a graph is $k$-good for some natural number $k$, then it is also $k'$-good for every natural number $k'$ with $k'\leq k$. It follows from Lemma~\ref{lem:theta_neck_pan_cyc}, Lemma~\ref{lem:kclose}, and Lemma~\ref{lem:wiggly_pan_cyc} that for every odd natural number $k$ there exists a natural number $f(k)$ such that every $f(k)$-good graph contains cycles of every length modulo $k$.

\begin{theorem}\label{thm:kgood}
Let $G$ be a graph and $k$, $m$ natural numbers with $k$ odd. If~$G$ is $162k^8$-good, then $G$ contains a cycle whose length is congruent to $m$ modulo $k$.
\end{theorem}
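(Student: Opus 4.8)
The plan is to run a short case analysis on which of the three structures in the definition of $k$-good is present in $G$, and to reduce each case to one of Lemmas~\ref{lem:theta_neck_pan_cyc}, \ref{lem:wiggly_pan_cyc}, and \ref{lem:kclose}. Two elementary monotonicity remarks keep the bookkeeping clean. First, for $1\le N'\le N$, every $\theta(N,i)$-necklace contains a $\theta(N',i)$-necklace as a subgraph: retain any $N'$ of the $\theta$-graphs (in the cyclic order) and reroute the cyclic connection through the discarded ones, using for each discarded $\theta$-graph a path through its two attachment vertices as part of the new connecting path; the pairwise disjointness required in Definition~\ref{def:theta-necklace} guarantees that every interior vertex of a rerouted connecting path lies either in a discarded $\theta$-graph or in the interior of an original connecting path, hence not in a retained $\theta$-graph. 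Second, a $k'$-wiggly necklace is $k$-wiggly whenever $1\le k\le k'$, since any $k$-subset of the witnessing index set $I$ still witnesses $k$-wigglyness. I would also record once and for all that, since $k$ is odd, $\gcd(k,2i)=1$ for $i\in\{1,2\}$, and that $162k^8\ge 3k^4$ and $162k^8\ge 2k$ for every natural number $k$.

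Now assume $G$ is $162k^8$-good. If $G$ contains a $\theta(162k^8,i)$-necklace with $i\in\{1,2\}$, then by the first remark $G$ contains a $\theta(3k^4,i)$-necklace $H$, and since $\gcd(k,2i)=1$, Lemma~\ref{lem:theta_neck_pan_cyc} applied to $H$ yields a cycle of length $m$ modulo $k$ in $H\subseteq G$. If $G$ contains a $162k^8$-wiggly necklace $N$, then by the second remark $N$ is $2k$-wiggly, so Lemma~\ref{lem:wiggly_pan_cyc} (using that $k$ is odd) gives a cycle of length $m$ modulo $k$ in $N\subseteq G$. Finally, if $G$ contains a cycle and a path that are $162k^8$-close, then since $162k^8=18\cdot(3k^4)^2$, applying Lemma~\ref{lem:kclose} with $3k^4$ in place of $k$ produces a $\theta(3k^4,1)$- or a $\theta(3k^4,2)$-necklace in $G$, and we conclude as in the first case. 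These three cases are exhaustive by the definition of $k$-good, so $G$ has a cycle of length $m$ modulo $k$.

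The statement is essentially an assembly of the three earlier lemmas, so I do not expect a genuine obstacle. The only step needing a sentence of justification is the first monotonicity remark — that a $\theta(N,i)$-necklace contains $\theta(N',i)$-necklaces obtained by rerouting through the discarded $\theta$-graphs — and this is immediate from the disjointness conditions in Definition~\ref{def:theta-necklace}. The constant $162k^8$ is exactly what the $k$-close case forces through $18\cdot(3k^4)^2$, and it dominates the thresholds $3k^4$ and $2k$ required in the other two cases, so no sharper accounting is needed.
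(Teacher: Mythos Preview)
Your proof is correct and follows essentially the same route as the paper's: a three-way case split on the structure witnessing $162k^8$-goodness, reducing via Lemmas~\ref{lem:theta_neck_pan_cyc}, \ref{lem:wiggly_pan_cyc}, and \ref{lem:kclose} (with $3k^4$ in place of $k$ for the last, using $162k^8=18(3k^4)^2$). The only difference is cosmetic: you spell out the monotonicity of $\theta(N,i)$-necklaces and $k$-wiggly necklaces explicitly, whereas the paper invokes Lemma~\ref{lem:theta_neck_pan_cyc} directly on the larger necklace via the inequality $162k^8>3k^4$.
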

\begin{proof}
By Lemma~\ref{lem:wiggly_pan_cyc}, we can assume that $G$ contains a path and a cycle which are $162k^8$-close or a $\theta(162k^8,i)$-necklace where $i \in \{1,2\}$. Since $162k^8 > 3k^4$, by Lemma~\ref{lem:theta_neck_pan_cyc}, we can assume that $G$ contains a path and a cycle which are $162k^8$-close. By Lemma~\ref{lem:kclose}, the graph $G$ contains a $\theta(3k^4,i)$-necklace where $i \in \{1,2\}$. Now $G$ contains a cycle whose length is congruent to $m$ modulo $k$ by Lemma~\ref{lem:theta_neck_pan_cyc}.
\end{proof}

Thus, to prove Theorem~\ref{thm:main} it suffices to show that for any natural number $k$, every sufficiently large 3-connected cubic graph is $k$-good. 

\section{Paths containing given edges}

In this section we investigate the following question: Given a set $S$ of edges in a subcubic graph $G$, does there exist a path in $G$ containing $k$ edges of $S$? We show that such a path exists if $|S|$ is sufficiently large and $G$ is 2-connected. Note that 2-connectivity is a necessary constraint, since we cannot find such a path for $k=3$ if $G$ is a tree and $S$ contains all edges incident with vertices of degree 1.\\ 
We apply this result in Section 5 to construct cycles and paths which are $k$-close: If $C$ is a cycle such that $G-C$ has a block $B$ containing many vertices which have distance at most 2 from $C$, then we can find a path~$P$ containing $k$ of these vertices and $P$ is $k$-close to $C$.\\
Before we prove the main theorem of this section, we prove the special case where $G$ has a Hamiltonian path and $S$ consists of the edges which are not contained in the Hamiltonian path. Note that in this case we allow multiple edges in $G$ and we do not require $G$ to be 2-connected.

\begin{lemma}\label{lem:chord}
Let $k$ be a natural number, $G$ a subcubic multigraph with a Hamiltonian path~$H$, and $M = E(G)\setminus E(H)$. If $|M| \geq \frac{8}{3}k(k-1)+1$, then there exists a path $P$ in $G$ such that $|E(P)\cap M|\geq k$.
\end{lemma}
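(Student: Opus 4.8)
The plan is to encode the non-tree edges as intervals on the Hamiltonian path and then apply Erdős–Szekeres. Write $H=v_1v_2\cdots v_n$ and, for $e\in M$ joining $v_a$ and $v_b$ with $a<b$, identify $e$ with the interval $[a,b]$. Since $G$ is subcubic and $H$ is Hamiltonian, each internal vertex $v_i$ lies on at most one edge of $M$ and the two ends $v_1,v_n$ lie on at most two; hence the edges of $M$ are pairwise vertex-disjoint apart from at most two of them, and after deleting at most two edges from $M$ I may assume the endpoints of the remaining chords are all distinct. Order the remaining chords $e_1,\dots,e_t$ so that their left endpoints increase, $a_1<a_2<\cdots<a_t$; here $t\ge|M|-2\ge(k-1)^2+1$, since $\frac{8}{3}k(k-1)-1\ge(k-1)^2+1$ for all $k\ge 2$ and $k\le 1$ is trivial. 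Applying Theorem~\ref{thm:erdos-szekeres} with $r=s=k$ to the sequence of right endpoints $b_1,\dots,b_t$ produces either $k$ chords whose right endpoints also increase (so that, pairwise, neither interval contains the other — call them \emph{non-nested}) or $k$ chords whose right endpoints decrease (pairwise \emph{nested}).

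In the nested case, relabel so that $[a_1,b_1]\supset[a_2,b_2]\supset\cdots\supset[a_k,b_k]$, i.e.\ $a_1<\cdots<a_k<b_k<\cdots<b_1$. I would produce the path as an explicit zig-zag: starting from $v_{a_k}$, cross the innermost chord to $v_{b_k}$, then follow $H$ from $v_{b_k}$ to $v_{b_{k-1}}$ and cross the next chord to $v_{a_{k-1}}$, then follow $H$ from $v_{a_{k-1}}$ to $v_{a_{k-2}}$ and cross the next chord to $v_{b_{k-2}}$, and so on, alternating between the "right" ($b$) side and the "left" ($a$) side until the outermost chord is used. In a nested family no chord endpoint lies strictly between two consecutive left endpoints or between two consecutive right endpoints, so the $H$-segments used are pairwise disjoint and contain no chord endpoint besides the two at which the path enters and leaves them; hence this is a genuine path through all $k$ chords.

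The non-nested case is where the real work lies. Now $a_1<\cdots<a_k$ and $b_1<\cdots<b_k$, but consecutive chords may cross as well as be disjoint, and the naive zig-zag can become blocked (already for a "tight" run of pairwise crossing chords one needs a less obvious route). My plan is to split the family, along the index line, into maximal left-to-right \emph{blocks} of consecutively overlapping chords, occupying pairwise disjoint ranges of indices; inside a block one routes by an alternating zig-zag between its cluster of left endpoints and its cluster of right endpoints, and consecutive blocks are joined through the endpoint-free gap between their ranges. The delicate point — and the main obstacle — is checking that within each block the chosen $H$-segments are pairwise disjoint and avoid all other chord endpoints, and that the inter-block connections never re-enter an already visited block. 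A workable alternative is an induction on $k$ in which one deletes the chord with the largest left endpoint and splices it back into the path built for the rest; either way the substance is careful bookkeeping rather than a new idea.

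In both cases one obtains a path $P$ with $|E(P)\cap M|\ge k$, which is what is claimed. I expect the looseness of the constant $\frac{8}{3}k(k-1)$ compared with the $(k-1)^2+1$ that the Erdős–Szekeres step alone would give simply reflects the use of a slightly wasteful but more transparent chaining argument in the non-nested case.
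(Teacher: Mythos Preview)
Your nested (decreasing) case is fine and matches the paper's easy case. The genuine gap is in the non-nested (increasing) case, and your proposed ``blocks'' decomposition does not work as stated. A maximal run of \emph{consecutively} overlapping chords need not have all left endpoints below all right endpoints: for instance $[1,3],[2,5],[4,6]$ is a single such block, yet $4>3$, so there is no clean ``cluster of left endpoints'' separated from a ``cluster of right endpoints'' and the alternating zig-zag you describe is not defined. The fallback ``induction on $k$ by deleting the last chord and splicing it back'' is also not routine: once the inductive path has been built on $e_1,\dots,e_{k-1}$, the endpoint $a_k$ may lie on an $H$-segment already consumed by that path, and there is no obvious local surgery that inserts $e_k$ without breaking the path elsewhere. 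So the substance here is not ``careful bookkeeping''; it is precisely the missing idea.

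The paper handles this by applying Erd\H{o}s--Szekeres \emph{asymmetrically}: a decreasing subsequence of length $k$ (your easy case) or an increasing one of length $\lceil \tfrac{8}{3}k\rceil$. In the increasing case it does not try to use all chords. Instead it greedily extracts a maximal family $e_1',\dots,e_m'$ of pairwise \emph{non}-crossing chords; if $m\ge k$ this already gives a path. Otherwise the remaining chords are partitioned into classes $C(e_i')$, each of which consists of chords that \emph{do} pairwise cross (all their left endpoints lie below $r(e_i')$). Inside a pairwise-crossing class of size $t$ one can route a path between the extreme endpoints using all chords if $t$ is odd and $t-1$ if $t$ is even, i.e.\ at least $\tfrac34 t$; finally the odd-indexed classes (or the even-indexed ones) are pairwise disjoint and can be chained. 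The constant $\tfrac{8}{3}$ is exactly the product of the $\tfrac12$ from odd/even and the $\tfrac34$ per class --- it is not slack left over from a ``more transparent'' argument, but the price of a routing that is actually proved to work.
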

\begin{proof}
Let $H=p_1 \ldots p_n$. We define a total ordering on $V(G)$ by $p_i \leq p_j$ if and only if $i \leq j$. We write $u < v$ for $u,v\in V(G)$ if $u\leq v$ and $u\neq v$. Let $f(k)= \lceil \frac{8}{3}k(k-1)+1 \rceil$. We may assume $|M| = f(k)$ and $M = \{e_1,\ldots,e_{f(k)}\}$. For $i\in \{1,\ldots ,f(k)\}$, let $\ell_i$ and $r_i$ denote the end points of $e_i$ such that $\ell_i<r_i$. By possibly relabeling the edges in $M$, we may assume that $\ell_i < \ell_j$ whenever $i<j$.
Now consider the sequence $S=(r_1, r_2,\ldots,r_{f(k)})$. Since $|S|\geq  (\lceil \frac{8}{3}k \rceil-1)(k-1)+1$, by Theorem~\ref{thm:erdos-szekeres} the sequence $S$ has a decreasing subsequence of length $k$ or an increasing subsequence of length $\lceil \frac{8}{3}k \rceil$. If $S$ has a decreasing subsequence $S_{>}$ of length $k$, then it is easy to see that there exists a path containing the $k$ edges of $M$ which are incident with a vertex in $S_>$. Thus, we may assume that $S$ contains an increasing subsequence $S_<$ of length $\lceil \frac{8}{3}k \rceil$.
Let $M'\subset M$ be the set of edges in $M$ which are incident to a vertex in $S_{<}$. We say two edges $e,f \in M'$ are \emph{non-crossing} if $r(e)<\ell(f)$ or $r(f) < \ell(e)$. For $e\in M'$, let 
$$R(e) = \{e'\in M' \,|\, r(e)<\ell(e')\}\,.$$ 
We define a set $E' = \{e_1',\ldots ,e_m'\}$ of edges in the following way. Let $e_1'$ be the edge in $M'$ for which $\ell(e_1')$ is minimal. Suppose $e_i'$ is defined and $R(e_i')$ is non-empty, then we define $e_{i+1}'$ as the edge in $R(e_i')$ for which $\ell (e_{i+1}')$ is minimal. We continue like this until we reach an index $m$ such that $R(e_m')=\emptyset$. Note that the edges in $E'$ are pairwise non-crossing and there exists a path in $G$ containing all edges in $E'$, see Figure~\ref{fig:lem_chord}. Thus, we may assume $m<k$. For $e\in E'$ we define
$$C(e) = \{e' \in M' \, | \,  \ell(e)\leq \ell(e')<r(e)\}\,.$$
By construction of $E'$, the sets $C(e_1'),\ldots ,C(e_m')$ form a partition of $M'$. For $e\in E'$, let $H(e)$ denote the shortest subpath of $H$ containing the ends of all edges in $C(e)$. For each $i\in \{1,\ldots ,m\}$, we now construct a path $P_i$ in $H(e_i')\cup C(e_i')$ containing at least $\frac{3}{4} |C(e_i')|$ of the edges in $C(e_i')$ and whose endvertices are the endvertices of $H(e_i')$. Let $C(e_i') = \{f_1, \ldots , f_t\}$ with $\ell (f_1) < \cdots < \ell (f_t)$. Note that $f_1 = e_i'$. Since $C(e_i') \subseteq M'$, we have $r(f_1) < \cdots < r(f_t)$ and $H(e_i') = \ell (f_1)Hr(f_t)$. If $t=1$ or $t=2$, define $P_i = f_1$ or $P_i = f_1 + r(f_1)H\ell(f_2) + f_2$, respectively. If $t\geq 3$ is odd, then define
$$P_i =
f_1 + r(f_1)Hr(f_2) + f_2 + \ell(f_2)H\ell(f_3) + f_3 + \ldots + f_{t-1} + \ell(f_{t-1})H\ell(f_t) + f_t\,.
$$
If $t\geq 4$ is even, then we define
$$P_i =
f_1 + r(f_1)Hr(f_2) + f_2 + \ell(f_2)H\ell(f_3) + f_3 + \ldots + \ell(f_{t-2})H\ell(f_{t-1}) + f_{t-1} + r(f_{t-1})Hr(f_{t})\,.
$$
In this case $P_i$ contains all edges in $C(e_i')$ apart from $f_t$, while in the other cases $P_i$ contains all edges in $C(e_i')$. This implies $|E(P_i) \cap C(e_i')| \geq \frac{3}{4} |C(e_i')|$.\\
Note that if $e\in C(e_i')$, $f\in C(e_j')$, and $i+2\leq j$, then $e$ and $f$ are not crossing. Thus, the paths $P_i$ and $P_j$ are disjoint for $i,j\in \{1,\ldots ,m\}$ with $i+2\leq j$. Let $M_{\text{odd}}$ denote the union of all $C(e_i')$ where $i\in \{1,\ldots ,m\}$ is odd, and $M_{\text{even}} = M'\setminus M_{\text{odd}}$. One of $M_{\text{odd}}$ and $M_{\text{even}}$, say $M_{\text{odd}}$, contains least $\frac{4}{3}k$ edges. Let $P$ be a path which contains all the paths~$P_i$ with $i$ odd as a subpath. Now $P$ contains at least $\frac{3}{4}|M_{\text{odd}}| \geq k$ edges of~$M$.
\end{proof}

\begin{figure}[t]
    \centering
    \includegraphics[width=\textwidth]{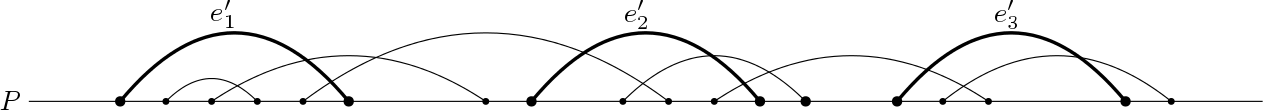}
    \caption{Proof of Lemma~\ref{lem:chord}}
    \label{fig:lem_chord}
\end{figure}

The lower bound for the size of $M$ in Lemma~\ref{lem:chord} is best possible up to a constant factor. To see this, let $K$ be a natural number and let $G_K$ be the graph consisting of a Hamiltonian path and a matching $M$ on $K^2$ edges as in Figure~\ref{fig:path_with_edges}. It is easy to see that every path in $G_K$ contains at most $3K-2$ edges of $M$. Thus, if there is a path containing at least $k$ edges of $M$, then $K>\frac{k}{3}$ and $|M|\geq \frac{k^2}{9}$.

We use Lemma~\ref{lem:chord} to prove a more general statement that holds for all subcubic 2-connected graphs.

\begin{figure}[h]
    \centering
    \includegraphics[width=1\textwidth]{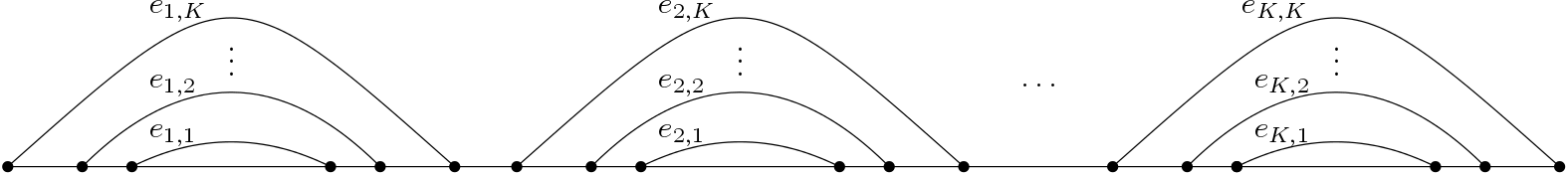}
    \caption{A graph where $|M|=K^2$ and $|E(P)\cap M|\leq 3K-2$ for every path $P$.}
    \label{fig:path_with_edges}
\end{figure}

\begin{theorem}\label{thm:chord_adv}
Let $G$ be a 2-connected subcubic graph, $S\subseteq E(G)$, and $k$ a natural number. If $|S| \geq 2^{4k^2}$, then there exists a path $P$ in $G$ such that $|E(P)\cap S| \geq k$.
\end{theorem}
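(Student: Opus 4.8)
The plan is to reduce the general 2-connected case to Lemma~\ref{lem:chord} by finding, inside $G$, a subgraph that is essentially a path together with many extra edges. The natural candidate is a DFS tree (depth-first search tree) of $G$, or rather a longest path, but the cleanest route uses the fact that a 2-connected subcubic graph has a reasonably long path through which we can route the edges of $S$. Concretely, I would first observe that since $G$ is subcubic, a vertex is incident with at most $3$ edges, so any path visiting $t$ vertices meets at most $3t$ edges of $S$; thus if a path contains $k$ edges of $S$ it only needs $\Omega(k)$ vertices, and conversely a long enough path will do once we know the edges of $S$ can be made ``chords'' of it.

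The key step is to pass to a spanning structure in which $S$-edges behave like chords of a Hamiltonian path. Here I would use a DFS tree $T$ of $G$ rooted at some vertex: every non-tree edge of $G$ is a ``back edge'' joining a vertex to one of its ancestors in $T$. Since $G$ is subcubic, $T$ has maximum degree $3$, so $T$ is a subcubic tree, and the leaves of $T$ partition its edge set into root-to-leaf branches. If $T$ has few leaves, then one branch is a long path and many edges of $S$ are chords of it, putting us directly in the situation of Lemma~\ref{lem:chord} (with the Hamiltonian path of that lemma being the branch, after suppressing irrelevant vertices). If $T$ has many leaves, then I would instead use the leaves to build a long path: iteratively, take a leaf, walk up to the branch point, and splice together several branches into one long path, using 2-connectivity to guarantee extra edges connecting different branches. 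The point of $2^{4k^2}$ as opposed to the polynomial bound in Lemma~\ref{lem:chord} is precisely that we may have to iterate an Erd\H{o}s--Szekeres-type dichotomy several times (branch structure, then chord structure within a branch), and each iteration squares (or exponentiates) the required size; a doubly-exponential-free bound of the form $2^{4k^2}$ comfortably absorbs a constant number of such steps.

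More carefully, here is the intended skeleton. Build a DFS tree $T$; let $L$ be its set of leaves. Case 1: some root--leaf branch $B$ of $T$ contains at least $\frac{8}{3}k(k-1)+1$ edges of $S$ as back edges incident to $B$ at both ends — then contract/suppress degree-$2$ vertices outside $B$ and apply Lemma~\ref{lem:chord} to the multigraph on $B$ plus these back edges. Case 2: otherwise, the edges of $S$ are ``spread out'' across many branches; using that $G$ is $2$-connected (so removing the root still leaves things connected, and more importantly every subtree hanging off a branch has an edge back up to a strict ancestor), greedily merge branches to obtain either one path containing $\geq k$ edges of $S$ directly, or a path and a large collection of edges of $S$ with both ends on it, again invoking Lemma~\ref{lem:chord}. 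Throughout, the subcubic hypothesis keeps the trees of bounded degree and keeps the counting linear, while $2$-connectedness is what lets a branch-merging step always find the connecting edge it needs.

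The main obstacle I anticipate is the bookkeeping in Case 2: controlling, via the pigeonhole principle, how the $|S| \ge 2^{4k^2}$ edges distribute among the branches and subtrees of $T$, and making sure that after merging we have genuinely found a single path (not a tree or a subdivision of one) with the required $k$ edges of $S$ lying on it or chording it. A secondary subtlety is that Lemma~\ref{lem:chord} is stated for multigraphs with a \emph{Hamiltonian} path, so when we restrict to a branch $B$ we must suppress the off-branch vertices to turn the relevant back edges into genuine chords of a Hamiltonian path of the suppressed multigraph; one has to check this suppression does not destroy the $S$-edges we are counting (it does not, since suppressing a degree-$2$ vertex only merges two edges, and we only suppress vertices not incident to the $S$-edges under consideration, or we re-pick $S$-representatives accordingly). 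I expect the clean write-up to replace the informal ``branch merging'' with a single application of Erd\H{o}s--Szekeres (Theorem~\ref{thm:erdos-szekeres}) to an ordering of the back edges induced by the DFS preorder, mirroring the proof of Lemma~\ref{lem:chord} but one level up.
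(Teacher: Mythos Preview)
Your DFS-tree reduction is a different route from the paper's, and Case~1 is fine: restricting to a single root--leaf branch $B$ together with the $S$-edges having both ends on $B$ gives a subcubic multigraph with Hamiltonian path $B$, and Lemma~\ref{lem:chord} finishes. The genuine gap is Case~2. In a DFS tree every non-tree edge is a back edge, so there are no cross edges between sibling subtrees; your ``branch merging'' therefore cannot be done by a single hop, and any leaf-to-leaf path through a branch point loses all $S$-back-edges pointing to ancestors above that point. You give no mechanism that simultaneously (a) produces a single path after merging and (b) retains $\Omega(k^2)$ of the $S$-edges as chords of that path. The number of leaves can be a constant fraction of $|V(G)|$, so pigeonhole over branches is useless in this regime, and the proposed ``Erd\H{o}s--Szekeres one level up'' has no natural pair of total orders to act on: DFS preorder on the lower and upper endpoints of back edges does not encode whether two back edges can lie on a common path in $G$.

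The paper's argument sidesteps all of this. It passes to a minimal counterexample and deduces that every non-special edge lies in a $2$-edge-cut and every non-trivial $2$-edge-cut separates two pieces each containing a special edge; then $|S|\ge 2^{4k^2}$ is used only to force a long \emph{cycle} $C$ (via a diameter estimate in a subcubic graph). The degree-$3$ vertices on $C$ are paired by internally disjoint paths in $G-E(C)$ via Lemma~\ref{lem:path_system}; contracting each such path to a chord of $C$ yields exactly the setting of Lemma~\ref{lem:chord}, which produces a path $P$ threading at least $k$ of these chord-paths. The final move --- the idea your outline has no analogue of --- is that the $2$-edge-cut structure of the minimal counterexample lets each chord-path be rerouted inside its own component of $G-E(C)$ so as to pass through a special edge, and hence $P$ collects $k$ edges of $S$.
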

\begin{proof}
Suppose the lemma is false and let $(G,S,k)$ be a counterexample where $|E(G)|$ is minimal. We may assume $k\geq 3$ since $G$ is 2-connected. We call the edges in $S$ special. If an edge $e$ is not special and not contained in a 2-edge-cut, then by minimality of $G$, there exists a path containing $k$ special edges in $G-e$, and thus also in $G$. Hence, we may assume that every edge which is not special is contained in a 2-edge-cut. \\
Suppose there exists a vertex $v$ of degree 2 which is not incident with a special edge. Let~$G'$ be the graph we obtain from $G$ by suppressing $v$. If $G'$ is simple, then $G'$ is a smaller counterexample. If $G'$ is not simple, then $G-v$ is 2-connected and $G-v$ is a smaller counterexample. Thus we may assume that every vertex of degree 2 is incident with at least one special edge.\\
Suppose $\{e,f\}$ is a non-trivial 2-edge-cut and $K_1$ and $K_2$ are the two components of $G-e-f$. Suppose $K_1$ does not contain a special edge. Then the graph $G'$ we obtain from~$G$ by contracting $K_1$ into a single vertex has fewer edges than $G$ but the same number of special edges. Since $G$ is a minimal counterexample, we can find a path containing $k$ special edges in $G'$ and therefore also in $G$. Thus, we may assume that every non-trivial 2-edge-cut separates the graph into two components which contain special edges. \\
Since $|S|\geq 2^{4k^2}$, the graph $G$ has at least $\frac{2}{3} \cdot 2^{4k^2}$ vertices. Let $v$ be any vertex in $G$. Since $G$ is subcubic, the distance class at distance $m$ away from $v$ contains at most $3 \cdot 2^{m-1}$ vertices. Let $d_{\text{max}}$ denote the maximum distance from a vertex to $v$ in $G$. We have
$$\frac{2}{3}\cdot 2^{3k^2} \leq |V(G)| \leq 1+\sum_{i=1}^{d_{\text{max}}}3 \cdot 2^{i-1} = \frac{3}{2}\cdot 2^{d_{\text{max}}+1}-2\,.$$  
Thus, we have $2^{d_{\text{max}}+1} \geq  2^{4k^2-2}$ and $d_{\text{max}} \geq 4k^2-3$. Since $G$ is 2-connected, this implies that $G$ has a cycle $C$ of length at least $8k^2-6$. Let $X$ be the set of vertices of $C$ that have degree 3 in $G$. We may assume that $C$ contains at most $k-1$ special edges. Since every vertex of degree 2 is incident with a special edge, $C$ contains at most $2(k-1)$ vertices of degree 2. Thus, $|X| \geq 8k^2 - 6 - 2(k-1) = 8k^2 - 2k - 4$.\\ 
Let $H=G-E(C)$. If $K$ is a component of~$H$, then either $|V(K)\cap X|\geq 2$ or $K$ is an isolated vertex and $V(K)\cap X = \emptyset$. By Lemma~\ref{lem:path_system}, we can choose in each component $K$ a collection of disjoint paths pairing up the vertices of $V(K) \cap X$ (apart from possibly one if $|V(K) \cap X|$ is odd). This defines a set of paths $\mathcal{P}$ with $|\mathcal{P}| \geq \frac{1}{3}|X| \geq \frac{1}{3}(8k^2- 2k - 4)$. Let $G'$ be an auxiliary graph we obtain by taking the cycle $C$ and adding a chord between the ends of each path in $\mathcal{P}$. In $G'$ the cycle $C$ has $|\PP|$ chords. Note that since $|\PP| \geq \frac{8}{3}k(k-1)+1$ for $k \geq 3$, there is a path in $G'$ containing at least $k$ chords of $C$ by Lemma~\ref{lem:chord}. This corresponds to a path $P$ in $G$ which uses edges of $C$ and at least $k$ paths in $\mathcal{P}$. Let $\mathcal{P}'\subseteq \mathcal{P}$ be the set of paths contained in $P$, so $|\PP'|\geq k$.\\
Let $Q$ be a path in $\mathcal{P'}$ which joins the vertices $u$ and $v$ of $C$, and let $e$ be the edge of $Q$ incident with $u$. If $e$ is not special, then there exists an edge $f$ such that $G-e-f$ is disconnected. Note that $f\in E(Q)$. Let $K$ be the component of $G-e-f$ not containing~$C$. If $K$ is an isolated vertex, then $e$ or $f$ is special. If $K$ is not an isolated vertex, then $K$ contains a special edge $s$. Since $Q$ contains $e$ and $f$ which separate $K$ from $C$, there is no other path in $\mathcal{P'}$ which contains a vertex of $K$. Since $G$ is 2-connected we can modify $P$ in $K$ so that it contains $s$. That is, we replace $Q$ by a path $Q'$ in $H$ which joins $u$ and $v$, contains $s$ and is disjoint from all paths in $\PP' \setminus \{Q\}$. By repeating this modification of $P$, we obtain a path in $G$ that contains at least one special edge for each path in $\PP'$. Since $|\PP'|\geq k$, this path contains at least $k$ special edges.
\end{proof}
We do not believe that the lower bound for $|S|$ in Theorem~\ref{thm:chord_adv} is optimal. However, Lang and Walther~\cite{langwalther} constructed a family of 2-connected cubic graphs where the length of a longest path in $G$ is $O(\log^2(|E(G)|))$. By choosing $S=E(G)$, this shows that the lower bound for $|S|$ in Theorem~\ref{thm:chord_adv} cannot be smaller than $\Omega (2^{\sqrt{k}})$. \\
We conclude this section by proving a more specialized lemma which we use in Section 5.

\begin{lemma}\label{lem:theta+matching}
Let $k$ be a natural number and $G$ a subcubic graph which is the union of a $\theta$-graph $\Theta = (P_1,P_2,P_3)$ and a matching $M$ such that every edge in $M$ has its endpoints on two different legs of $\Theta$. If $|M|\geq 3k^2$, then $G$ has a cycle $C$ such that $|E(C)\cap M|\geq k$.
\end{lemma}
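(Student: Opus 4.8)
The plan is to pass to a single pair of legs carrying many matching edges, and then to thread a cycle through those two legs, using the third leg of $\Theta$ only as a return path.

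By the pigeonhole principle, two of the three legs --- say $P_1$ and $P_2$ --- are joined by at least $k^2$ edges of $M$. Parametrize $P_1 = a_0a_1\cdots a_p$ and $P_2 = b_0b_1\cdots b_q$ with $a_0 = b_0 = u$ and $a_p = b_q = v$. Since $u$ and $v$ have degree $3$ in $\Theta$ and $G$ is subcubic, no matching edge is incident with $u$ or $v$, so each of these $k^2$ edges has the form $a_{i_s}b_{j_s}$ with $1 \le i_s \le p-1$ and $1 \le j_s \le q-1$; moreover, as $M$ is a matching, the indices $i_s$ are pairwise distinct and the indices $j_s$ are pairwise distinct. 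List the edges so that $i_1 < i_2 < \cdots < i_{k^2}$. Since $k^2 \ge (k-1)^2 + 1$, Theorem~\ref{thm:erdos-szekeres} applied to the sequence $(j_1, \dots, j_{k^2})$ produces $k$ of these edges $f_1, \dots, f_k$, taken in the induced order (so with first coordinates strictly increasing), whose second coordinates are either strictly decreasing or strictly increasing.

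In both cases I build the required cycle as a zig-zag: starting at $u$, walk along $P_1$ to the endpoint of $f_1$, traverse $f_1$ to $P_2$, walk along $P_2$ to the endpoint of $f_2$, traverse $f_2$ back to $P_1$, and so on, using all $k$ of the edges $f_1, \dots, f_k$. When the second coordinates are decreasing (``crossing'' edges) the walk on $P_2$ heads back towards $u$ each time, so if $k$ is odd the zig-zag already returns to $u$ and closes; if $k$ is even it stops near $v$, and we close it by running along $P_1$ to $v$ and then back to $u$ along $P_3$. When the second coordinates are increasing (``nested'' edges) the zig-zag ascends along both legs, always stops near $v$, and again we close it through $P_3$. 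In every case, the strict monotonicity of the two coordinate sequences forces the successive sub-paths used on $P_1$ and on $P_2$ to be pairwise disjoint and $u$, $v$ to be met only once, so the walk is a genuine cycle, and it contains all $k$ edges $f_1, \dots, f_k$ of $M$.

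The step I expect to be the crux is the ``nested'' case. Taken in isolation, pairwise non-crossing chords of the cycle $P_1 \cup P_2$ can be combined at most two at a time into a cycle, so the increasing alternative delivered by Erd\H{o}s--Szekeres looks useless. The point is that we have more room than just $P_1 \cup P_2$: the third leg $P_3$, being internally disjoint from $P_1$ and $P_2$, supplies exactly the $v$--$u$ return path needed to close the ascending zig-zag. Once this is realized, the only remaining work is the routine verification that no vertex is used twice, which is immediate from the monotone ordering of $f_1, \dots, f_k$.
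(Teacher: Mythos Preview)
Your argument is correct and follows the same approach as the paper: pigeonhole to one pair of legs, Erd\H{o}s--Szekeres for a monotone run of $k$ matching edges, then a zig-zag cycle, with the same three-way split into the cases ``decreasing, $k$ odd'', ``decreasing, $k$ even'', and ``increasing''. Your explicit remark that $u$ and $v$ already have degree~$3$ in $\Theta$ (so no matching edge meets them) and your analysis of why the third leg $P_3$ is indispensable in the nested case are exactly the points the paper leaves to its figure.
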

\begin{proof}
For one pair of legs of $\Theta$, say $P_1$ and $P_2$, we have $|E(P_1,P_2)| \geq k^2$. Let $P_1=p_1p_2 \ldots p_n$ and $P_2=q_1q_2 \ldots q_m$ with $p_1=q_1$ and $p_n=q_m$. Define a total ordering of $V(P_1)$ by $p_i \leq p_j$ if and only if $i \leq j$ and a total ordering of $V(P_2)$ by $q_i \leq q_j$ if and only if $i \leq j$. Given an edge $e$ between $P_1$ and $P_2$ let $p_i(e)$ denote the end of $e$ on $P_i$ for $i \in \{1,2\}$. Let $e_1,\ldots,e_{k^2} \in M$ be distinct edges between $P_1$ and $P_2$ where $p_{1}(e_i) \leq p_{1}(e_{j})$ whenever $i \leq j$. By Theorem~\ref{thm:erdos-szekeres}, the sequence $S=(p_2(e_1),\ldots ,p_2(e_{k^2}))$ has a subsequence $S'=(p_2(e_1'),\ldots ,p_2(e_{k}'))$ of length $k$ which is increasing or decreasing. In each case, there is a cycle $C$ in $G$ containing all the edges $e_i'$ for $i \in \{1,\ldots ,k\}$, see Figure~\ref{fig:lem_3_4}.
\end{proof}

\begin{figure}[h]
    \centering
    \begin{subfigure}[b]{0.43\textwidth}
        \includegraphics[width=\textwidth]{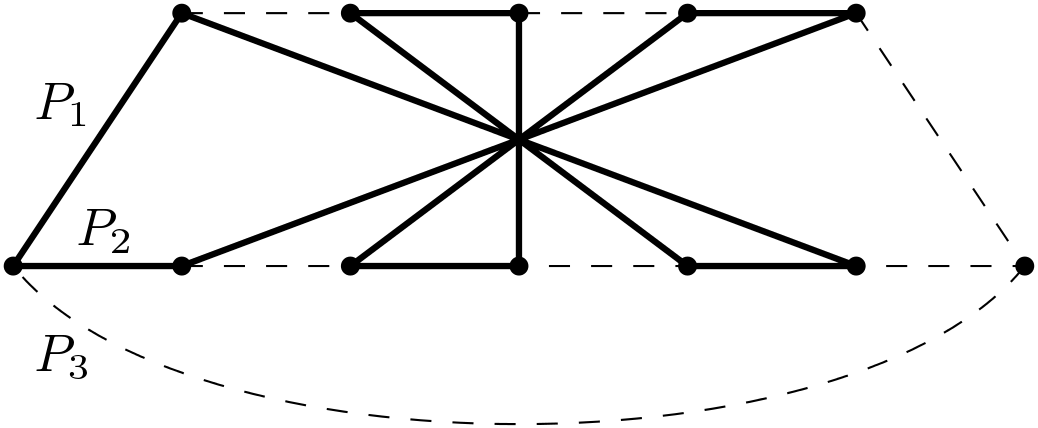}
        \caption{$S'$ is decreasing and $k$ odd}
    \end{subfigure}
    \hspace{.5cm}
    \begin{subfigure}[b]{0.5\textwidth}
        \includegraphics[width=\textwidth]{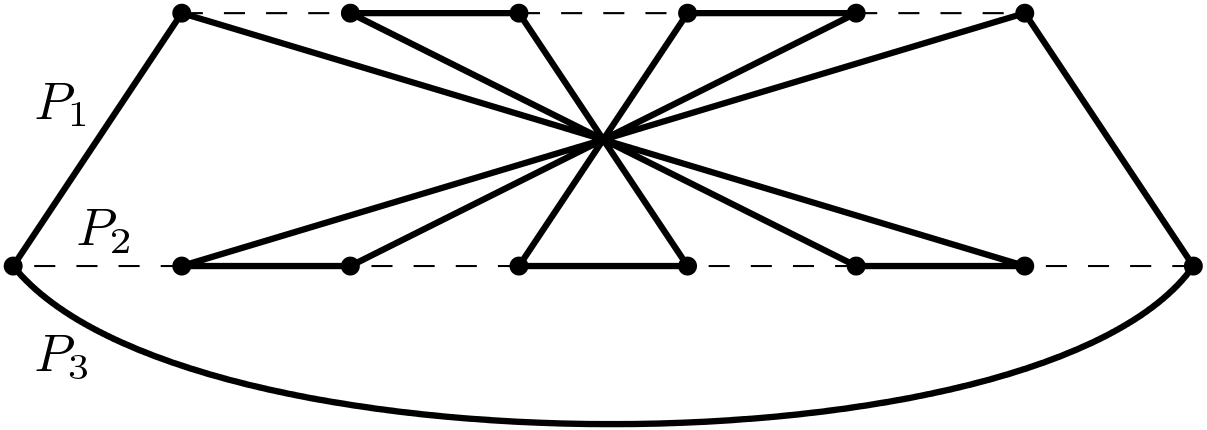}
        \caption{$S'$ is decreasing and $k$ even}
    \end{subfigure}
    \par\bigskip
    \begin{subfigure}[b]{0.45\textwidth}
        \includegraphics[width=\textwidth]{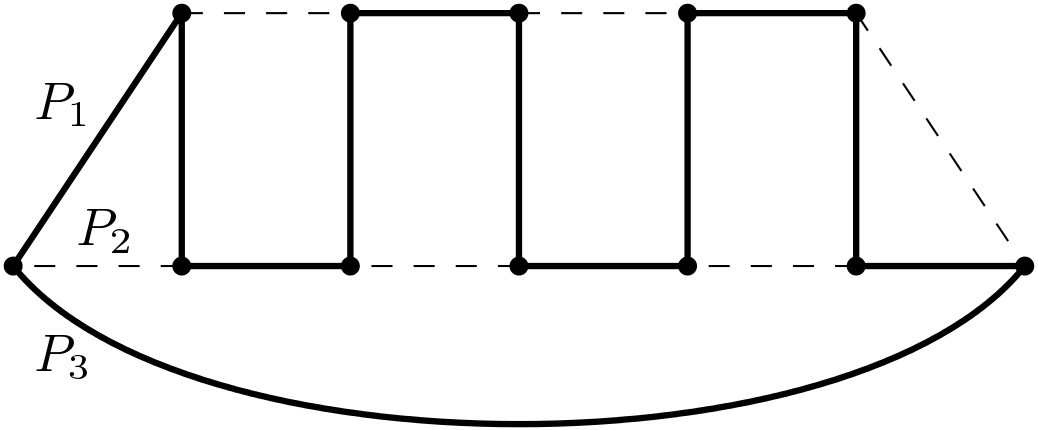}
        \caption{$S'$ is increasing}
    \end{subfigure}
    \caption{Proof of Lemma~\ref{lem:theta+matching}}
    \label{fig:lem_3_4}
\end{figure}

\section{Paths whose lengths differ by 1 or 2}

Let $G$ be a 2-connected subcubic graph and $x,y\in V(G)$. A result by Fan~\cite{fan} implies that if $d(v)=3$ for every $v\in V(G)\setminus\{x,y\}$, then there exist two $x-y$ paths whose lengths differ by 1 or 2. In this section we extend this result to the case where $V(G)\setminus \{x,y\}$ contains one or two vertices of degree 2. This will allow us in Section 5 to use non-trivial 3-edge-cuts for the construction of $k$-wiggly necklaces.

\begin{definition}[$f_C(x,y)$]
Let $C$ be a cycle in a graph and let $x,y$ be two distinct vertices of $C$. We define $f_C(x,y)$ as the absolute difference of the lengths of the two $x-y$ paths on~$C$.
\end{definition}

\begin{lemma} \label{lem:1degree2}
	Let $G$ be a 2-connected graph which is not a 3-cycle, $xy\in E(G)$ and $z \in V(G)\setminus\{x,y\}$. If $d_G(x)\leq 4$, $d_G(y)\leq 4$, $d_G(z)\leq 3$ and $d_G(v)=3$ for all $v \in V(G) \setminus \{x,y,z\}$, then there are two $x-y$ paths $P_1, P_2$ in $G-xy$ with $1\leq |E(P_1)|-|E(P_2)|\leq 2$.
\end{lemma}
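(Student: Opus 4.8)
The plan is to produce both paths inside $G':=G-xy$, reducing via the block structure of $G'$ to a situation where Fan's result applies directly.

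\emph{Block structure of $G'$.} Since $G$ is $2$-connected, $G-x$ and $G-y$ are connected, so neither $x$ nor $y$ is a cut-vertex of $G'$, and $G'$ is subcubic because $d_{G'}(x)=d_G(x)-1\le 3$ and likewise for $y$. Every endblock of $G'$ must contain $x$ or $y$, as an endblock avoiding both would be separated from the rest of $G$ by its cut-vertex, contradicting $2$-connectivity. Hence $G'$ has at most two endblocks, so its block--cut tree is a path; write $G'$ as a chain of blocks $B_1,\dots,B_t$ joined at cut-vertices $c_1,\dots,c_{t-1}$ with $x\in V(B_1)\setminus\{c_1\}$ and $y\in V(B_t)\setminus\{c_{t-1}\}$, and set $c_0:=x$, $c_t:=y$. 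An $x$--$y$ path in $G'$ is exactly a concatenation of a $c_{i-1}$--$c_i$ path through each $B_i$, so it suffices to find one index $i$ and two $c_{i-1}$--$c_i$ paths in $B_i$ of lengths differing by $1$ or $2$: concatenating each of these with a single fixed $c_{j-1}$--$c_j$ path in every other block yields the required $x$--$y$ paths in $G'$.

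\emph{The generic case.} Single-edge blocks contribute only a forced segment, so consider a $2$-connected block $B_i$. If $z$ is not an interior vertex of $B_i$ --- in particular whenever $d_G(z)=3$, or $z\in\{x,y\}$ (impossible), or $z$ is a cut-vertex of $G'$ --- then every vertex of $B_i$ outside $\{c_{i-1},c_i\}$ has all its $G'$-edges inside $B_i$ and hence degree $3$ in $B_i$, so Fan's result applied to $B_i$ with $c_{i-1},c_i$ as the two exceptional vertices gives the desired pair. Moreover $G'$ has at least one $2$-connected block: otherwise $G'$ is a tree, $G=G'+xy$ has a unique cycle and is thus a cycle $C_n$ with $n\ge 4$ (as $G$ is simple and not a triangle), contradicting that all but at most three vertices of $G$ have degree $3$. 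So the only remaining case is $d_G(z)=2$ with $z$ an interior vertex of a $2$-connected block $B:=B_j$; applying the previous step to any other $2$-connected block, we may further assume $B$ is the \emph{unique} $2$-connected block. Writing $u:=c_{j-1}$, $v:=c_j$, we are reduced to proving: if $B$ is a $2$-connected subcubic graph, $u,v\in V(B)$, $z\in V(B)\setminus\{u,v\}$ with $d_B(z)=2$, and every vertex outside $\{u,v,z\}$ has degree $3$, then $B$ has two $u$--$v$ paths of lengths differing by $1$ or $2$.

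\emph{The reduced claim --- the step I expect to be the main obstacle.} Let $N_B(z)=\{z_1,z_2\}$, and first suppose $z_1z_2\notin E(B)$. Suppress $z$: the graph $B^\ast:=(B-z)+z_1z_2$ is simple, $2$-connected, subcubic, and all of its vertices except $u,v$ have degree $3$ (the vertices $z_1,z_2$ keep degree $3$), so Fan's result gives two $u$--$v$ paths $Q_1,Q_2$ in $B^\ast$ with $1\le |E(Q_1)|-|E(Q_2)|\le 2$. Re-inserting $z$ --- replacing the edge $z_1z_2$ by the path $z_1zz_2$ wherever it is traversed --- converts $Q_1,Q_2$ into $u$--$v$ paths of $B$, each length increasing by $0$ or $1$ according to whether $z_1z_2$ was used. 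If $z_1z_2$ is used by both or by neither, the difference is still $1$ or $2$ and we are done. The obstacle is the case where exactly one of $Q_1,Q_2$ uses $z_1z_2$: a difference of $1$ can then collapse to $0$ (if the shorter path uses it) and a difference of $2$ can rise to $3$ (if the longer one does). My plan here is to eliminate these two bad configurations by a more careful choice of the path pair: in each of them one exhibits an additional $u$--$v$ path of intermediate length --- either from a second application of Fan's result to $B^\ast$, or directly from the $2$-connectivity of $B$ around $z_1$ and $z_2$ --- and uses it in place of one of $Q_1,Q_2$ to bring the difference back into $\{1,2\}$. The case $z_1z_2\in E(B)$ (so $z,z_1,z_2$ span a triangle, and $|V(B)|\ge 4$ unless $B$ is itself a triangle, which is trivial since its two $u$--$v$ paths have lengths $1$ and $2$) is analogous: a $u$--$v$ path of $B$ through $z$ uses $z_1,z,z_2$ consecutively and can be shortcut to the edge $z_1z_2$, so one works in $B-z$ (which is still $2$-connected), suppresses the now degree-$2$ vertices $z_1,z_2$, applies Fan's result, and tracks lengths with the same bookkeeping and careful choice of the path pair.
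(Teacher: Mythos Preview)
Your block-chain reduction of $G'=G-xy$ to a single $2$-connected block $B$ with three exceptional vertices $u,v,z$ is fine and essentially matches the paper's Claim~1. The gap is precisely where you flag it: the suppression of $z$. You apply Fan's result in $B^\ast$ to obtain $Q_1,Q_2$ with length difference $1$ or $2$, then re-insert $z$; when exactly one of the two paths traverses $z_1z_2$, the difference can become $0$ or $3$, and your plan to repair this --- ``a second application of Fan's result to $B^\ast$'' or ``directly from the $2$-connectivity of $B$ around $z_1$ and $z_2$'' --- is not an argument. Fan's theorem is an existence statement that gives you \emph{some} pair of paths; invoking it again returns the same conclusion, not a different pair, and gives no control over which paths use the edge $z_1z_2$. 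Nothing you have written produces the ``path of intermediate length'' you need. This is the entire content of the lemma beyond Fan's result, and it is exactly what is missing. (The triangle case $z_1z_2\in E(B)$ has the same defect, plus further complications you have not addressed: $z_1$ or $z_2$ may coincide with $u$ or $v$, and suppressing both may create parallel edges.)

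The paper does not try to reduce to Fan by suppressing $z$. After the block reduction and a further reduction eliminating non-trivial $2$-edge-cuts (so that the suppressed graph is $3$-connected), it invokes Tutte's theorem to find a non-separating induced cycle $C$ through $x$ avoiding $y$. The two $x$--$w$ arcs of $C$ have length difference $f_C(x,w)$, which hits $\{1,2\}$ for at least two choices of $w$, so one can pick $w\ne z$; a $w$--$y$ path in $G'-E(C)$ (available because $C$ is non-separating) completes both $x$--$y$ paths. The point is that the extra freedom in choosing $w$ on $C$ absorbs the one bad vertex $z$ directly, rather than trying to hide $z$ and then undo the damage.
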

\begin{proof}
Suppose the theorem is false and let $(G,x,y,z)$ be a counterexample where $|V(G)|+|E(G)|$ is minimum. Clearly $|V(G)| \geq 4$. Let $G'=G-xy$. Note that $d_{G'}(v)\geq 2$ for every $v\in V(G')\setminus\{x,y\}$. We may assume there are no two $x-y$ paths in $G'$ whose lengths differ by 1 or 2. \\
	
\textbf{Claim 1:} $G'$ is 2-connected. \\
Suppose $G'$ is not 2-connected. Since $G$ is not a cycle, there exists a 2-connected block $B$ in~$G'$. Let $Q_1$ be an $x-B$ path and let $Q_2$ be a $B-y$ path in $G'$. Note that $Q_1 \cap Q_2 = \emptyset$. Let $q_1$ and $q_2$ be the endvertices of $Q_1$ and $Q_2$ in $B$, respectively. Since $G$ is 2-connected, the block graph of $G'$ is a path, so the only vertices of degree 2 in $B$ are $q_1$, $q_2$, and possibly~$z$. Let $B' = B+q_1q_2$ if $q_1q_2\notin E(B)$ and $B'=B$ otherwise. If $B'$ is a triangle, then $B'=B$ and there are two $q_1-q_2$ paths $R_1$, $R_2$ in $B$ of lengths 1 and 2. If $B'$ is not a triangle, then by minimality of $G$ there are two $q_1-q_2$ paths $R_1$, $R_2$ in $B$ whose lengths differ by 1 or 2. Now $P_1=Q_1 \cup R_1 \cup Q_2$ and $P_2=Q_1 \cup R_2 \cup Q_2$ are two $x-y$ paths in $G'$ whose lengths differ by 1 or 2. \\

\textbf{Claim 2:} There are no non-trivial 2-edge-cuts in $G'$.\\
Suppose the claim is false, so there exists a non-trivial 2-edge-cut $\{e,f\}$ with $e,f \in E(G')$. Let $K$ be the component of $G'-e-f$ with $|V(K) \cap \{x,y,z\}| \leq 1$. We choose $e,f$ over all non-trivial 2-edge-cuts to minimise the order of $K$. By the choice of $e$ and $f$ the component $K$ is 2-connected. Let $e_K$ and $f_K$ denote the ends of $e$ and $f$ in $K$, respectively. We may assume $x\notin V(K)$.\\ 
First suppose $y \notin V(K)$. By Claim 1, $G'$ is 2-connected so there are two disjoint $\{x,y\} - \{e_K, f_K\}$ paths $Q_1$ and $Q_2$ in $G'$. If $K$ is a triangle, then let $P_1$, $P_2$ denote the $e_K-f_K$ paths in $K$ of lengths 1 and 2, respectively. If $K$ is not a triangle, then by minimality of $G$, there are two $e_K-f_K$ paths $P_1, P_2$ in $K$ whose lengths differ by 1 or~2. Now $Q_1\cup P_1\cup Q_2$ and $Q_1\cup P_2\cup Q_2$ are two $x-y$ paths in $G'$ whose lengths differ by 1 or 2.\\ 
Thus, we may assume $y \in V(K)$ and $z\notin V(K)$. We may assume $y\neq e_K$. If $K$ is a triangle, then let $P_1$, $P_2$ denote the $e_K-y$ paths in $K$ of lengths 1 and 2, respectively. If $K$ is not a triangle, then either $K$ or $K+e_Ky$ satisfies the conditions of Lemma~\ref{lem:1degree2}. By minimality of $G$ there are two $e_K-y$ paths $P_1$, $P_2$ in $K$ whose lengths differ by 1 or 2. Let $Q$ be an $x-e_K$ path in $G'$ having no edges in $K$. Now $Q\cup P_1$ and $Q\cup P_2$ are two $x-y$ paths in $G'$ whose lengths differ by 1 or 2. \\

Note that Claim~1 implies that if $u, v \in \{x,y,z\}$ and $d_G(u)=d_G(v)=2$, then $u$ and $v$ are non-adjacent. Let $G''$ denote the graph obtained from $G'$ by suppressing the vertices of degree 2. Note that $G''$ is cubic. If $G''$ is not simple, then Claim 2 implies that $G''$ consists of three parallel edges and $G'$ is a $\theta$-graph consisting of 4 or 5 vertices. In this case it easy to see that there are two $x-y$ paths in $G'$ whose lengths differ by 1 or 2. Thus we may assume that $G''$ is simple. By Claim 2, $G''$ is 3-connected, so by Theorem~\ref{thm:tutte} there exists an induced non-separating cycle $C$ in $G'$ containing $x$ and not containing $y$. Let $w\in V(C)\setminus\{z\}$ such that $f_C(x,w) \in \{1,2\}$. Let $P_1$ and $P_2$ denote the two $x-w$ paths in~$C$, and let $Q$ be a $w-y$ path intersecting $C$ only in $w$. Now $P_1\cup Q$ and $P_2\cup Q$ are two $x-y$ paths in $G'$ whose lengths differ by~1 or~2.
\end{proof}

From Lemma~\ref{lem:1degree2} we can immediately derive the following.

\begin{theorem}\label{thm:1degree2}
Let $G$ be a 2-connected subcubic graph and $x,y,z\in V(G)$. If $d(v) = 3$ for all $v\in V(G)\setminus\{x,y,z\}$, then there are two $x-y$ paths $P_1, P_2$ with $1\leq |E(P_1)|-|E(P_2)|\leq 2$.
\end{theorem}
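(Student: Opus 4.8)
The plan is to derive Theorem~\ref{thm:1degree2} from Lemma~\ref{lem:1degree2} by a short reduction that handles the three cosmetic differences between the two statements: in the theorem there need not be an edge $xy$, the vertices $x,y,z$ may have degree $1$ or $2$ rather than being bounded by $4$, and $G$ might conceivably be a triangle. First I would dispose of small cases: if $G$ is a triangle (or more generally has at most, say, $4$ vertices) the two $x-y$ paths can be exhibited directly, since a $2$-connected graph on $3$ vertices is $K_3$ and the two $x-y$ paths on it have lengths $1$ and $2$. So assume $G$ is not a $3$-cycle.

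Next I would reduce to the case where $xy\in E(G)$. Since $G$ is $2$-connected there is a cycle through $x$ and $y$, hence two internally disjoint $x-y$ paths $R_1,R_2$. If $|E(R_1)|-|E(R_2)|$ is already $1$ or $2$ in absolute value we are done, so assume both $x-y$ paths on this cycle, and in fact (after choosing the cycle appropriately) we may just add a new edge $e=xy$ to $G$ if it is not present. The graph $G+xy$ is still $2$-connected, still subcubic except possibly at $x$ and $y$ where the degree rises to at most $4$, and all other vertices keep degree $3$ — except that $x,y$ themselves might have had degree $1$ or $2$ in $G$. This is the point where the degree hypotheses must be reconciled: I would argue that $x$ and $y$ (and $z$) can be assumed to have degree at least $2$ in $G$ after a harmless modification, or else handled separately. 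A vertex of degree $1$ in a $2$-connected graph does not occur (2-connectivity forces minimum degree $\geq 2$), so in fact $d_G(v)\geq 2$ for all $v$, and the only vertices of degree $2$ are among $x,y,z$. Thus in $G+xy$ we have $d(x),d(y)\leq 4$, $d(z)\leq 3$, and every other vertex has degree exactly $3$: precisely the hypotheses of Lemma~\ref{lem:1degree2} applied to $(G+xy, x, y, z)$ with the distinguished edge $xy$.

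Applying Lemma~\ref{lem:1degree2} to $G+xy$ and the edge $xy$ yields two $x-y$ paths $P_1,P_2$ in $(G+xy)-xy=G$ (or $G$ itself if $xy$ was already an edge, in which case we apply the lemma directly) whose lengths differ by $1$ or $2$. This is exactly the desired conclusion. The one bookkeeping subtlety is making sure that when $xy$ is added artificially, the paths returned by the lemma live in $G$ and not in the augmented graph; but the lemma explicitly produces the paths in $G-xy$, so they avoid the added edge and hence lie in the original $G$. The main (and essentially only) obstacle is the case analysis around the degrees of $x,y,z$: one must check that adding the edge $xy$ never pushes any vertex other than $x$ or $y$ above degree $3$, and that the excluded small case ($3$-cycle) is the only place Lemma~\ref{lem:1degree2} cannot be invoked — but since any $2$-connected graph on $\geq 4$ vertices that is not a triangle satisfies the lemma's running hypotheses once $xy$ is present, this is routine. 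Everything else is immediate from Lemma~\ref{lem:1degree2}.
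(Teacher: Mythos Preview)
Your proposal is correct and follows exactly the same approach as the paper: handle the $3$-cycle by hand, add the edge $xy$ if absent, check that $G+xy$ satisfies the degree hypotheses of Lemma~\ref{lem:1degree2}, and read off the two $x-y$ paths in $(G+xy)-xy=G$. The detour through a cycle containing $x$ and $y$ is unnecessary---you can go straight to adding the edge---but it does no harm.
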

\begin{proof}
The statement is trivial if $G$ is a 3-cycle. If $xy \in E(G)$ and $G$ is not a 3-cycle, then by Lemma~\ref{lem:1degree2} there exist two $x-y$ paths in $G-xy$ whose lengths differ by 1 or 2. If $xy \notin E(G)$, then $G'=G+xy$ satisfies the conditions of Lemma~\ref{lem:1degree2}. Thus there are two $x-y$ paths in $G$ whose lengths differ by 1 or 2.
\end{proof}

The next step is to allow two vertices of degree 2 in $V(G)\setminus \{x,y\}$. Note that in the following theorem we require the vertices $x_1, x_2, y,z$ to have degree exactly 2.

\begin{theorem} \label{thm:2deg2-ver1}
	Let $x_1, x_2, y, z$ be four distinct vertices of degree 2 in a 2-connected graph~$G$. If $d(v) = 3$ for all $v\in V(G)\setminus\{x_1, x_2, y ,z\}$, the vertices $x_1$, $x_2$ are not adjacent, and $x_1$, $x_2$ are not opposite vertices in a 4-cycle in $G$, then there are two $x_1-x_2$ paths $P_1$, $P_2$ with $|E(P_1)|-|E(P_2)|\in\{1,2\}$.
\end{theorem}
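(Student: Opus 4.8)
The plan is to argue by a minimal counterexample, closely following the proof of Lemma~\ref{lem:1degree2}. Let $(G,x_1,x_2,y,z)$ be a counterexample minimising $|V(G)|+|E(G)|$, so that $G$ is $2$-connected and subcubic, its only vertices of degree $2$ are $x_1,x_2,y,z$, and no two $x_1$-$x_2$ paths of $G$ have lengths differing by $1$ or $2$. Since $x_1$ and $x_2$ are non-adjacent and not opposite vertices of a $4$-cycle, a direct inspection rules out the smallest graphs, and we may assume $|V(G)|$ is large.

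The first and most laborious step is to show that $G$ has no non-trivial $2$-edge-cut. Suppose $\{e,f\}$ is one, with sides $K_1,K_2$ and with $e_i,f_i$ the ends of $e,f$ in $K_i$ (so $e_i\neq f_i$ since the cut is non-trivial). As $G$ is $2$-connected, the graph $K_i^{+}$ obtained from $G[K_i]$ by adding the edge $e_if_i$ (or $G[K_i]$ itself if that edge is already present) is $2$-connected and subcubic, and it has the same degree sequence as $G$ on $V(K_i)$, so its only possible degree-$2$ vertices are $(\{x_1,x_2,y,z\}\cap V(K_i))\cup\{e_i,f_i\}$. One now runs through the cases according to how $x_1,x_2,y,z$ distribute between $K_1$ and $K_2$: when $x_1,x_2$ lie on the same side one works inside that side (via minimality of $G$, or Theorem~\ref{thm:1degree2}, or Lemma~\ref{lem:1degree2}, or Fan's theorem, depending on how many of $y,z,e_i,f_i$ are also there) and reroutes the detour through the other side using $e$ and $f$; when $x_1,x_2$ lie on opposite sides one produces in each $K_i^{+}$ a suitable pair of paths, between an end of $e$ or $f$ and the endpoint among $\{x_1,x_2\}$ on that side, whose lengths differ by $1$ or $2$, and glues them through $e$ and $f$. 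In every case this yields two $x_1$-$x_2$ paths of $G$ with the required length difference, a contradiction; hence $G$ has no non-trivial $2$-edge-cut.

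Next, suppress the degree-$2$ vertices $y$ and $z$ and call the result $G^{*}$; it is $2$-connected and subcubic with $x_1,x_2$ its only degree-$2$ vertices. Put $\tilde G=G^{*}$ if $x_1x_2\in E(G^{*})$, and $\tilde G=G^{*}+x_1x_2$ otherwise; either way $\tilde G$ is cubic, and it is \emph{simple}, because a double edge $x_1x_2$ would force $x_1$ and $x_2$ to be opposite vertices of a $4$-cycle of $G$, contrary to hypothesis. Using that $G$ has no non-trivial $2$-edge-cut, $\tilde G$ is $3$-connected (the only way this fails is that $\tilde G$ is a tiny multigraph, e.g.\ three parallel edges, corresponding to an explicit small $G$ that is checked by hand). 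Apply Theorem~\ref{thm:tutte} to $\tilde G$, to an edge incident with $x_1$ but not with $x_2$ and with forbidden vertex $x_2$, obtaining a non-separating induced cycle $C$ of $\tilde G$ through $x_1$ and missing $x_2$; since $C$ is induced, unsuppressing $y,z$ turns it into a cycle $\hat C$ of $G$ through $x_1$ and missing $x_2$, and $G-V(\hat C)$ is connected. Choose $w\in V(\hat C)$ with $f_{\hat C}(x_1,w)\in\{1,2\}$; there are exactly two such $w$, and since a degree-$2$ vertex on $\hat C$ has no neighbour off $\hat C$ we take $w\notin\{y,z\}$. Then $w$ has a neighbour $w'$ off $\hat C$, and $G-V(\hat C)$ contains both $w'$ and $x_2$, so there is a $w$-$x_2$ path $Q$ meeting $\hat C$ only in $w$. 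The two $x_1$-$w$ paths on $\hat C$, each extended by $Q$, are two $x_1$-$x_2$ paths of $G$ whose lengths differ by $f_{\hat C}(x_1,w)\in\{1,2\}$ — the final contradiction.

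The hard part is twofold. First, the $2$-edge-cut analysis: with four distinguished degree-$2$ vertices there are many sub-cases, and in each one must check that the smaller graph fed into the inductive hypothesis or into Theorem~\ref{thm:1degree2}/Lemma~\ref{lem:1degree2} genuinely satisfies the required hypotheses — in particular that the two relevant endpoints remain non-adjacent and not opposite in a $4$-cycle, and that adding $e_if_i$ did not create an unwanted degree-$2$ vertex; several awkward sub-cases (e.g.\ when $e$ and $f$ are incident with $x_1$ and $x_2$) may need separate ad hoc arguments. Second, the closing step needs the degenerate possibility that \emph{both} balanced vertices of $\hat C$ lie in $\{y,z\}$ to be excluded or treated separately; this is likely to require a more careful choice of the forbidden vertex in Theorem~\ref{thm:tutte}, together with a direct treatment of the case where the non-separating cycle is forced to pass through $x_2$, and it is here, alongside the simplicity of $\tilde G$, that the ``not opposite in a $4$-cycle'' hypothesis does real work.
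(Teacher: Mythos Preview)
Your outline follows the same architecture as the paper's proof --- minimal counterexample, eliminate non-trivial $2$-edge-cuts, suppress degree-$2$ vertices, apply Tutte's theorem to get a non-separating induced cycle through $x_1$, then build the two paths. The $2$-edge-cut analysis you sketch corresponds to the paper's Claim~2, though the paper first proves a preliminary Claim~1 (that $x_1y,x_1z,x_2y,x_2z\notin E(G)$) which you omit; this is used both to streamline the $2$-edge-cut cases and to ensure that suppressing the degree-$2$ vertices yields a simple graph.

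The genuine gap is in the endgame, and you have correctly located it without resolving it. You forbid $x_2$ in Tutte's theorem, so $\hat C$ avoids $x_2$; but then both balanced vertices of $\hat C$ may lie in $\{y,z\}$ and your construction stalls. The paper makes the opposite trade: it forbids $z$, so only $y$ can be a bad balanced vertex, at the price that $C$ may now contain $x_2$. The case $x_2\notin V(C)$ is then exactly your easy argument. The case $x_2\in V(C)$ is handled by a separate, non-obvious construction: take the neighbour $v$ of $x_2$ on the shorter arc $C_1$, and on the longer arc $C_2$ define $f(w)=|E(C_1)|-2+|E(Q_2(w))|-|E(Q_1(w))|$, where $Q_1(w),Q_2(w)$ are the two $x_1$--$x_2$ subarcs of $C_2$ split at $w$. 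This function steps by $2$ along $C_2$, starts at $|E(C)|-2\ge 3$ (here the hypotheses that $x_1,x_2$ are non-adjacent and not opposite in a $4$-cycle give $|E(C)|\ge 5$) and ends at $\le -2$, so there is a $w\in V(C_2)\setminus\{x_1,x_2,y\}$ with $|f(w)|\in\{1,2\}$; a $v$--$w$ path off $C$ then produces the two required $x_1$--$x_2$ paths. This is precisely the ``direct treatment of the case where the non-separating cycle is forced to pass through $x_2$'' that you anticipate but do not supply, and it is the heart of the proof.
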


\begin{proof}
	Suppose the theorem is false and let $(G,x_1, x_2, y, z)$ be a counterexample where $G$ has minimum size. Clearly we can assume $|V(G)| \geq 5$. \\ 
	
	\textbf{Claim 1:} $x_1y, x_1z, x_2y, x_2z \notin E(G)$. \\
	Suppose the claim is false. We may assume $x_1y \in E(G)$. Let $x_1'$ be the neighbour of $x_1$ distinct from $y$ and let $y'$ be the neighbour of $y$ distinct from $x_1'$. Since $G$ is 2-connected and $x_1x_2\notin E(G)$, we have $x_1' \neq y'$ and $x_1'\neq x_2$.\\
	First suppose $x_1'y' \in E(G)$. In this case both $x_1'$ and $y'$ have degree 3 in $G$. We can assume that there are no two $x_1'-x_2$ paths in $G'=G-x_1-y$ whose lengths differ by 1 or 2, since otherwise there are also two $x_1-x_2$ paths in $G$ whose lengths differ by 1 or 2. Thus, by minimality of $G$, we have $x_1'x_2\in E(G')$ or $x_1'$, $x_2$ are contained in a 4-cycle. Similarly, we can assume that there are no two $y'-x_2$ paths in $G'$ whose lengths differ by 1 or 2. Again, minimality of $G$ implies that $y'x_2\in E(G')$ or $y'$, $x_2$ are contained in a 4-cycle. Note that $x_2$ cannot be adjacent to both $x_1'$ and $y'$ since otherwise $G$ is a 5-cycle with a chord and contains only three vertices of degree 2. Thus $x_2$ is contained in a 4-cycle with $x_1'$ and it follows by 2-connectivity of $G$ that $G$ is a 6-cycle with chord $x_1'y'$. It is easy to see that also in this case there are two $x_1-x_2$ paths whose lengths differ by 1 or 2. \\
	Now suppose $x_1'y' \notin E(G)$. Let $G''$ be the graph obtained from $G-x_1-y$ by adding the edge $e=x_1'y'$. Note that $G''$ is 2-connected and $x_2$, $z$ are the only vertices of degree 2 in~$G''$. By Theorem~\ref{thm:1degree2}, there are two $x_1'-x_2$ paths $Q_1$, $Q_2$ in $G''$ whose lengths differ by 1 or 2. If $Q_1$ does not contain $e$, let $P_1$ be the $x_1-x_2$ path consisting of $x_1x_1'$ and $Q_1$. If $Q_1$ contains $e$, let $P_1$ be the $x_1-x_2$ path we obtain from $Q_1$ by replacing $e$ with the path $x_1yy'$. We analogously define an $x_1-x_2$ path $P_2$ using $Q_2$. Note that $|E(P_1)|=|E(Q_1)|+1$ and  $|E(P_2)|=|E(Q_2)|+1$, so $P_1$ and $P_2$ are as desired. \\
	
    \textbf{Claim 2:} There are no non-trivial 2-edge-cuts in $G$.\\
	First, suppose there exists a non-trivial 2-edge-cut $\{e,f\}$ such that $G-e-f$ has a component $K$ with $|V(K)\cap \{x_1,x_2,y,z\}|\leq 1$. We choose such a 2-edge-cut $\{e,f\}$ for which the component $K$ containing at most one vertex of $\{x_1,x_2,y,z\}$ has minimal size. Note that~$K$ is 2-connected. Let $e_K,f_K$ denote the ends of $e$ and $f$ in $K$. If $V(K)\cap \{x_1,x_2\} = \emptyset$, then, since $G$ is 2-connected, there exist two disjoint $\{x_1,x_2\}-\{e_K,f_K\}$ paths $P_1,P_2$ in $G-E(K)$. Since $K$ is 2-connected, by Theorem~\ref{thm:1degree2} there are two $e_K-f_K$ paths $Q_1,Q_2$ in~$K$ whose lengths differ by 1 or 2. Now $P_1\cup Q_1\cup P_2$ and $P_1\cup Q_2\cup P_2$ are two $x_1-x_2$ paths whose lengths differ by 1 or 2. Thus, we may assume $x_1\in V(K)$. Again, by Theorem~\ref{thm:1degree2}, there are two $x_1-e_K$ paths $Q_1, Q_2$ whose lengths differ by 1 or 2. Let $P$ be an $e_K-x_2$ path in $G-E(K)$. Now $Q_1\cup P$ and $Q_2\cup P$ are two $x_1-x_2$ paths whose lengths differ by 1 or 2. Thus, we have the following:
	\begin{description}
	\item[(*)] For every non-trivial 2-edge-cut $\{e,f\}$ in $G$, each component of $G-e-f$ contains two vertices of $\{x_1,x_2,y,z\}$. 
	\end{description}
	Let $\{e,f\}$ be a non-trivial 2-edge-cut for which the component $K$ of $G-e-f$ containing~$x_1$ has minimal size. Now $K$ is 2-connected by (*), Claim 1, and since $x_1$ is not adjacent to~$x_2$. Let $e_K$ and $f_K$ denote the ends of $e$ and $f$ in $K$.
	Let $L$ be the component of $G-e-f$ different from $K$.
	First suppose $x_2\in V(K)$. By (*), we have $y\in V(L)$ and $z\in V(L)$. By minimality of $G$ there are two $x_1-x_2$ paths in $K$ whose lengths differ by 1 or 2. Thus we may assume $x_2\in V(L)$. By (*), we may assume $y\in V(K)$ and $z\in V(L)$. If $x_1$ is not adjacent to $e_K$ and $x_1$, $e_K$ are not opposite vertices in a 4-cycle, then by minimality of $G$ there are two $x_1-e_K$ paths in $K$ whose lengths differ by 1 or 2. Since these paths can be extended to $x_1-x_2$ paths in $G$, we may assume that $x_1$ is adjacent to $e_K$ or $x_1,e_K$ are opposite vertices in a 4-cycle.
	Similarly we can assume that either $x_1$ is adjacent to $f_K$ or $x_1$, $f_K$ are opposite vertices in a 4-cycle. If $x_1$ is adjacent to both $e_K$ and $f_K$, then $K$ is a 4-cycle where $x_1$ and $y$ are opposite vertices in a 4-cycle. In this case there exist $x_1-e_K$ paths in $K$ of lengths 1 and 3. Thus, we may assume that $x_1$ and $f_K$ are opposite vertices in a 4-cycle. If also $x_1$, $e_K$ are opposite vertices in a 4-cycle, then there are $x_1-e_K$ paths of length 2 and 4 in $K$. If $x_1$ is adjacent to $e_K$ then there are $x_1-e_K$ paths of length 1 and 3 in $K$. This concludes the proof of Claim 2.\\
	
\begin{figure}[t]
    \centering
    \includegraphics[width=.8\textwidth]{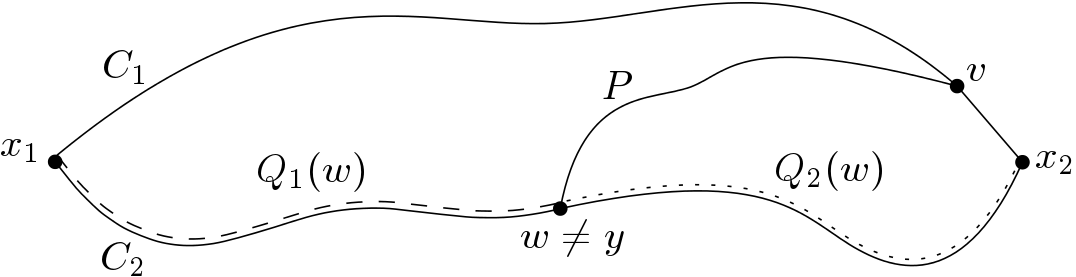}
    \caption{Proof of Theorem~\ref{thm:2deg2-ver1}}
    \label{fig:lem_4_6}
\end{figure}

Let $G'$ denote the graph obtained from $G$ by suppressing the vertices of degree 2. By Claim~2 and the fact that $G'$ is cubic, the graph $G'$ is simple and 3-connected. Now Theorem~\ref{thm:tutte} implies that there is a non-separating induced cycle $C$ in $G$ containing $x_1$ and not containing $z$.\\
Suppose $x_2 \notin V(C)$. There are two different vertices $v_1,v_2\in V(C)$ such that $f_C(x_1,v_1)=f_C(x_1,v_2)\in \{1,2\}$. In particular, there exists a vertex $v\in V(C)$ different from $y$ for which $f_C(x_1,v)\in\{1,2\}$. Let $P$ be a $v-x_2$ path in $G-E(C)$, and let $Q_1,Q_2$ be the two $x_1-v$ paths on $C$. Now $Q_1\cup P$ and $Q_2\cup P$ are two $x_1-x_2$ paths whose lengths differ by 1 or 2.\\
Thus we may assume $x_2 \in V(C)$. 
Let $C_1$ and $C_2$ be the two $x_1-x_2$ paths on $C$. We may assume $|E(C_1)|\leq |E(C_2)|$. If $|E(C_1)| = |E(C_2)|$, we may assume that $C_2$ does not contain~$y$.
Let $v$ be the neighbour of $x_2$ on $C_1$. Note that $v\in V(C)\setminus\{x_1,x_2,y\}$. 
For a vertex $w\in V(C_2)$, let $Q_1(w) = x_1C_2w$ and $Q_2(w) = wC_2x_2$, see Figure~\ref{fig:lem_4_6}. Moreover, for $w\in V(C_2)$, let 
$$f(w) = |E(C_1)|-2+|E(Q_2(w))|-|E(Q_1(w))|\,.$$ 
Note that as we move from $x_1$ to $x_2$ along $C_2$, the function $f$ decreases by 2 at every vertex.
We have $f(x_1) = |E(C)|-2$. Since $|E(C)|\geq 5$, we have $f(x_1)\geq 3$.\\
By definition, $f(x_2)=|E(C_1)|-|E(C_2)|-2\leq -2$. If $f(x_2)<-2$, then there are two vertices $w_1,w_2\in V(C_2)\setminus \{x_1,x_2\}$ such that $|f(w_1)|=|f(w_2)|\in \{1,2\}$. In particular, there exists a vertex $w\in V(C_2)\setminus \{x_1,x_2,y\}$ with $|f(w)|\in \{1,2\}$. 
If $f(x_2)=-2$, then $|E(C_1)|=|E(C_2)|$ and $y$ is not contained in $C_2$. Also in this case there exists a vertex $w\in V(C_2)\setminus \{x_1,x_2,y\}$ with $|f(w)|\in \{1,2\}$.\\
In each case, we can choose $w\in V(C_2)$ such that $d(w)=3$ and $|f(w)|\in \{1,2\}$. Let $P$ be a $v-w$ path in $G-E(C)$. Now $(C_1-vx_2)\cup P\cup Q_2(w)$ and $Q_1(w)\cup P\cup \{vx_2\}$ are two $x_1-x_2$ paths and the difference of their lengths is $|E(C_1)|-2+|E(Q_2(w))|-|E(Q_1(w))|=|f(w)|$, which is 1 or 2 by our choice of $w$.
\end{proof}

\begin{figure}[b]
    \centering
    \includegraphics[width=0.85\textwidth]{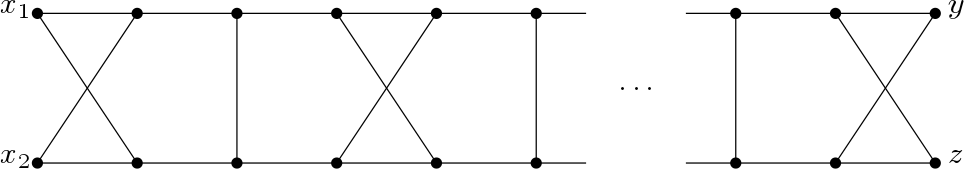}
    \caption{A graph where no two $x_1,x_2$-paths differ by one or two in length.}
    \label{fig:no_4_cyc_cond_counter}
\end{figure}

The graph $G$ in Figure~\ref{fig:no_4_cyc_cond_counter} shows that Theorem~\ref{thm:2deg2-ver1} is not true if we allow $x_1$ and $x_2$ to be opposite vertices in a 4-cycle $C$. Let $G'=G-C$ and let $x_1', x_2'$ be the two vertices in $N(C)$. There are no two $x_1'-x_2'$ paths in $G'$ whose lengths differ by 1 or 2 which shows that Theorem~\ref{thm:2deg2-ver1} is not true if we allow $x_1$ and $x_2$ to be adjacent. Note that the graph in Figure~\ref{fig:no_4_cyc_cond_counter} contains two disjoint $\{x_1,x_2\}-\{y,z\}$ paths which are $k$-close provided that $dist(x_1,y)\geq k$. The following lemma shows that if $dist(x_1,y)\geq k$ and $dist(x_2,y)\geq k$, then we can always find two disjoint $\{x_1,x_2\}-\{y,z\}$ paths which are $k$-close or two $x_1-x_2$ paths whose lengths differ by 1 or 2. These $k$-close paths can be used in Section 5 to find a cycle and a path which are $k$-close.

\begin{lemma}\label{lem:2deg2-ver2} 
Let $k$ be a natural number, $G$ a 2-connected graph and $x_1,x_2,y, z\in V(G)$ distinct vertices of degree 2. Assume that $dist(x_i,y)\geq k$ for $i=1,2$. If $d(v)= 3$ for all $v\in V(G)\setminus\{x_1,x_2,y, z\}$, then $G$ contains two $x_1-x_2$ paths $P_1,P_2$ with $|E(P_1)|-|E(P_2)|\in\{1,2\}$, or $G$ contains two disjoint $\{x_1,x_2\}-\{y,z\}$ paths $Q_1,Q_2$ such that $|E(Q_1,Q_2)|\geq k$.
\end{lemma}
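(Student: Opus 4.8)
The plan is to reduce, via Theorem~\ref{thm:2deg2-ver1}, to the two configurations excluded by that theorem, and to treat those by an induction on $|V(G)|$ that peels off the offending structure: each peel either uncovers enough length-flexibility to get two $x_1-x_2$ paths differing by $1$ or $2$, or contributes rungs to a ladder realising the second alternative. Indeed, if $x_1x_2\notin E(G)$ and $x_1,x_2$ are not opposite vertices in a $4$-cycle, then $x_1,x_2,y,z$ are four distinct vertices of degree $2$ and all other degrees are $3$, so Theorem~\ref{thm:2deg2-ver1} supplies the required $x_1-x_2$ paths; hence we may assume $x_1x_2\in E(G)$ or $x_1,x_2$ opposite in a $4$-cycle. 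Small $k$ (say $k\le 2$) is dealt with by hand; otherwise, since a shortest $x_1-y$ path has at least $k$ edges, $|V(G)|\ge k+1$, and we induct on $|V(G)|$, proving the statement for all valid tuples with the given parameter.

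Suppose first $x_1x_2\in E(G)$, and let $x_1',x_2'$ be the neighbours of $x_1,x_2$ other than $x_2,x_1$. Then $x_1'\ne x_2'$ (else their common neighbour is a cut-vertex), and $x_1',x_2'\ne y$ because $dist(x_i,y)\ge k\ge 3$. If $x_1'x_2'\in E(G)$ then $x_1x_2$ and $x_1x_1'x_2'x_2$ are two $x_1-x_2$ paths differing in length by $2$. Otherwise put $G^-=G-x_1-x_2$: in $G^-$ the vertices $x_1',x_2'$ have degree $2$, all previously cubic vertices remain cubic, and $dist_{G^-}(x_i',y)\ge k-1$. If $z\notin\{x_1',x_2'\}$ and $G^-$ is $2$-connected, apply the inductive hypothesis to $G^-$ with terminals $x_1',x_2'$, degree-$2$ vertices $x_1',x_2',y,z$, and parameter $k-1$. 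In the first case we extend the two $x_1'-x_2'$ paths by $x_1x_1'$ and $x_2x_2'$ to two $x_1-x_2$ paths differing by $1$ or $2$; in the second case we extend the two disjoint $\{x_1',x_2'\}-\{y,z\}$ paths the same way and note that $x_1x_2$ is then an additional crossing edge, so the resulting disjoint $\{x_1,x_2\}-\{y,z\}$ paths have at least $k$ edges between them. When $G^-$ is not $2$-connected one uses the standard reductions at cut-vertices and non-trivial $2$-edge-cuts (as in the proofs of Lemma~\ref{lem:1degree2} and Theorem~\ref{thm:2deg2-ver1}), replacing a $y$-free side by a short path or single edge, which does not decrease $dist(x_i',y)$; when $z\in\{x_1',x_2'\}$ the reduced graph has only three degree-$2$ vertices and one appeals to Theorem~\ref{thm:1degree2}, the distance bound forcing the remaining graph to be small.

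The case that $x_1,x_2$ are opposite vertices in a $4$-cycle $x_1ax_2b$ is analogous. Generically $a,b$ are cubic with outside-neighbours $a',b'$; set $G^-=G-\{x_1,x_2,a,b\}$, in which $a',b',y,z$ have degree $2$ and $dist_{G^-}(a',y)\ge k-2$, and apply induction with terminals $a',b'$ and parameter $k-2$. Two $a'-b'$ paths differing by $1$ or $2$ lift, through the detours $x_1aa'\ldots b'bx_2$, to such a pair of $x_1-x_2$ paths; two disjoint $\{a',b'\}-\{y,z\}$ paths with at least $k-2$ crossing edges lift, through the initial segments $x_1aa'$ and $x_2bb'$, to disjoint $\{x_1,x_2\}-\{y,z\}$ paths for which $x_1b$ and $x_2a$ are two further crossing edges, giving at least $k$. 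The degenerate subcases — $a$ or $b$ equal to $z$, $a'=b'$, $a'b'\in E(G)$, or $z\in\{a',b'\}$ — are handled directly, the distance bound again making the relevant graph small.

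Each recursive step strictly decreases $|V(G)|$ and decreases $k$ by exactly the number of crossing edges it hands back, so the accounting for the second alternative stays tight; the recursion stops when we escape the two excluded configurations (Theorem~\ref{thm:2deg2-ver1} then gives the first alternative), when a terminal reaches $y$ (then the distance has dropped to $0$, so at least $k$ vertices have been peeled and at least $k$ crossing edges recorded, giving the second alternative; alternatively Theorem~\ref{thm:1degree2} applies to the three remaining degree-$2$ vertices), or in a bounded base case. I expect the main difficulty to be precisely this case bookkeeping: keeping the loss-of-$2$-connectivity reductions compatible with the distance hypothesis, and disposing of the configurations in which one of $x_1',x_2',a',b'$ coincides with $z$ and the ladder structure degenerates.
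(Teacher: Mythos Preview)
Your approach is essentially the same as the paper's: reduce via Theorem~\ref{thm:2deg2-ver1} to the two obstructing configurations, peel off the edge or the $4$-cycle, and induct (the paper inducts on $k$ rather than on $|V(G)|$, but the two are equivalent here since each peel drops $k$ by exactly the number of crossing edges it contributes). One point where your sketch is slightly off: when $z\in\{x_1',x_2'\}$, the graph $G-x_1-x_2$ has $z$ of degree~$1$, not~$2$; the paper handles this by deleting $z$ as well and taking $x_1'$ to be the other neighbour of $z$, so that the resulting graph has exactly three degree-$2$ vertices $x_1',x_2',y$ and Theorem~\ref{thm:1degree2} (not a smallness argument) yields the two $x_1'-x_2'$ paths directly.
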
 
\begin{proof}
We prove the statement by induction on $k$. If $x_1$ and $x_2$ are not adjacent and not opposite vertices in a 4-cycle, then the existence of the two desired $x_1-x_2$ paths follows immediately from Theorem~\ref{thm:2deg2-ver1}. Thus we may assume that $x_1$ and $x_2$ are adjacent or opposite vertices in a 4-cycle. We can easily find two disjoint $\{x_1,x_2\}-\{y,z\}$ paths $Q_1$ and $Q_2$ with $|E(Q_1,Q_2)|\geq 1$ so the statement is true for $k\leq 1$. Suppose now $k\geq 2$ and that the statement is true for $k-1$ and $k-2$. If $x_1$ and $x_2$ are adjacent, we define~$x_1'$ to be the neighbour of $x_1$ different from $x_2$, unless this vertex is $z$, in which case we choose the neighbour of $z$ different from $x_1$ to be $x_1'$. We analogously define $x_2'$. We set $G'=G-x_1-x_2-z$ if $z$ was a neighbour of $x_1$ or $x_2$ and $G'=G-x_1-x_2$ otherwise.\\
If $x_1$ and $x_2$ are opposite vertices in a 4-cycle $C$, then let $u$ and $v$ be the two vertices in $N(C)$. If one of these vertices is $z$, say $u$, then we define~$x_1'$ as the neighbour of $z$ not in~$C$ and $x_2'$ as $v$, otherwise we choose $u$ and $v$ as $x_1'$ and $x_2'$. We set $G' = G-V(C)-z$ if $z$ had a neighbour on $C$ and $G' = G-V(C)$ otherwise.\\
It is easy to see that $G'$ is connected. Suppose $G'$ is not 2-connected. Then the block $B_1$ containing $x_1'$ is different from the block $B_2$ containing $x_2'$ and both blocks are endblocks. We may assume that $B_1$ contains at most one of $y$ and $z$. Let $c$ be the cutvertex of $G'$ which is contained in $B_1$. By Theorem~\ref{thm:1degree2}, there are two $x_1'-c$ paths $P_1$ and $P_2$ in $B_1$ whose lengths differ by 1 or 2. Let $P$ be a $c-x_2'$ path in $G'$. Now $P_1\cup P$ and $P_2\cup P$ are two $x_1'-x_2'$ paths whose lengths differ by 1 or 2 and it is easy to see how they can be extended to $x_1-x_2$ paths with the same property. Hence, we may assume that $G'$ is 2-connected.\\
If $z\notin V(G')$, then by Theorem~\ref{thm:1degree2} there exist two $x_1'-x_2'$ paths in $G'$ whose lengths differ by 1 or 2 and they can be extended to $x_1-x_2$ paths with this property. Thus, we may assume that $z\in V(G')$. Note that if $x_1$ and~$x_2$ are adjacent, then dist$(x_1',y)\geq$ dist$(x_1,y)-1\geq (k-1)$ and dist$(x_2',y)\geq (k-1)$. If $x_1$ and $x_2$ are opposite vertices in a 4-cycle, then dist$(x_1',y)\geq $ dist$(x_1,y)-2\geq (k-2)$ and dist$(x_2',y)\geq (k-2)$. We may assume that there are no two $x_1'-x_2'$ paths in $G'$ whose lengths differ by 1 or 2. By induction there exist two $\{x_1',x_2'\} -\{y,z\}$ paths $Q_1'$ and $Q_2'$ such that $|E(Q_1',Q_2')|\geq k-1$ (if $x_1$ and $x_2$ are adjacent in $G$) or $|E(Q_1',Q_2')|\geq k-2$ (if $x_1$ and $x_2$ are opposite vertices in a 4-cycle). The paths $Q_1'$ and $Q_2'$ can be extended to $\{x_1,x_2\} -\{y,z\}$ paths $Q_1$ and $Q_2$. If $x_1$ and~$x_2$ were adjacent, then $|E(Q_1,Q_2)| = |E(Q_1',Q_2')| + 1 \geq k$. If they were opposite vertices in a 4-cycle, then $|E(Q_1,Q_2)| = |E(Q_1',Q_2')| + 2 \geq k$. In any case, $|E(Q_1,Q_2)|\geq k$.
\end{proof}

\section{Proof of the main theorem}

In this section we prove the main theorem of this paper. The general idea is to show that if $G$ is large enough, then $G$ is $k$-good. We begin the proof by taking a minimal $u,v$-$\theta$-graph $\Theta$ in $G$ where $u$ and $v$ are far apart in $G$. We would like to investigate the 2-connected endblocks of $G-\Theta$, but unfortunately $G-\Theta$ might contain many vertices of degree~1. Instead we investigate the subgraph $H$ of $G-\Theta$ which is induced by the vertices of degree at least~2 in $G-\Theta$. Now there might be vertices of degree less than~2 in $H$, but we show in Section 5.1 that we may assume there are only few of them. We distinguish two types of 2-connected endblocks of $H$ depending on whether its neighbours on $\Theta$ lie on only one leg or on at least two different legs. We call these endblocks $\Theta$-isolated and $\Theta$-connecting, respectively. In Section~5.2 we show that $G$ is $k$-good if the number of $\Theta$-connecting endblocks in $H$ is sufficiently large. In Section~5.3 we show the same if $H$ has many $\Theta$-isolated endblocks. Finally, the only remaining case is where $H$ has only few 2-connected endblocks. This is dealt with in Section~5.4 which concludes the proof.

\subsection{General framework and initial observations}

\begin{definition}[short $\theta$-graph]
A \textbf{shortest $u,v$-$\theta$-graph} in $G$ is a $u,v$-$\theta$-graph $\Theta = (P_1,P_2,P_3)$ for which $|E(\Theta)|$ is minimal.
We say a graph is a \textbf{short $\theta$-graph} if it is a shortest $u,v$-$\theta$-graph for some vertices $u,v$. 
\end{definition}

Note that by Menger's theorem a shortest $u,v$-$\theta$-graph in $G$ exists if and only if there exists no 2-cut separating $u$ and $v$. In particular, it always exists if $G$ is 3-connected.

\begin{definition}[$F(\Theta)$, $\Theta$-friendly]
Let $\Theta = (P_1,P_2,P_3)$ be a short $\theta$-graph in a 3-connected cubic graph $G$. We write $F(\Theta)$ for the set of all vertices in $G-\Theta$ with at least two neighbours in $\Theta$.
We say a vertex $v\in V(G)$ is \textbf{$\Theta$-friendly} if $v\in F(\Theta )$ and $N_{\Theta} (v)\not\subseteq V(P_i)$ for every $i\in\{1,2,3\}$. 
\end{definition}

Every vertex $v\in F(\Theta)$ is either $\Theta$-friendly or $N_{\Theta}(v)$ is contained in one leg of $\Theta$. If $v\in F(\Theta)$ is not $\Theta$-friendly, then any two neighbours of $v$ in $\Theta$ have distance at most 2 on $\Theta$ since the legs have minimal length. The following shows that we can assume that $F(\Theta)$ has only few vertices which are $\Theta$-friendly or whose neighbours have distance 1 on $\Theta$.

\begin{lemma} \label{lem:easy_specialcases}
Let $\Theta = (P_1,P_2,P_3)$ be a short $\theta$-graph in a 3-connected cubic graph $G$. If 
\begin{description}
    \item[(a)] $G-E(\Theta)$ has at least $\frac{3}{2}k$  edges with both ends in $V(\Theta)$, or
    \item[(b)] $G-\Theta$ has at least $\frac{3}{2}k$ vertices which are $\Theta$-friendly, or
    \item[(c)] $G-\Theta$ has at least $3k$ isolated vertices, or
    \item[(d)] $G-\Theta$ has at least $3k$ isolated edges, or
    \item[(e)] $\Theta$ has at least $\frac{3}{2}k$ edges which are contained in a triangle in $G$,
\end{description}
then $G$ is $k$-good.
\end{lemma}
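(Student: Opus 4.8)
The plan is to handle each of the five cases (a)–(e) by directly producing one of the structures in the definition of $k$-good: a path and a cycle that are $k$-close, a $\theta(k,i)$-necklace with $i\in\{1,2\}$, or a $k$-wiggly necklace. In every case the basic idea is the same: the short $\theta$-graph $\Theta$ is a long "backbone'' built from three paths of minimal length, and the $\frac32 k$ or $3k$ special objects (chords, friendly vertices, isolated vertices, isolated edges, triangles) attach to this backbone in enough places that we can cut $\Theta$ into $k$ arcs, each carrying one special object, and glue them into a necklace. Minimality of the legs is what keeps everything local, and $G$ cubic keeps all the attaching paths short.

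\textbf{Cases (a) and (e): chords and triangle edges give a $\theta$-necklace.} First I would observe that in case (a), $\frac32 k$ edges of $G$ have both ends on $\Theta$; since $\Theta$ is a cycle plus a path (the three legs), contract or ignore the shortest leg $P_3$ and think of a long cycle $C\subseteq\Theta$ together with many of these $\frac32 k$ edges acting as chords or near-chords of $C$. An averaging argument (a constant fraction of the special edges have both ends on the same leg, hence both ends on $C$) reduces to the situation of Lemma~\ref{lem:chord} or, more directly, to the construction in the proof of Lemma~\ref{lem:kclose}: a path (a leg of $\Theta$) and a cycle (built from the other two legs) joined by $\ge k$ disjoint edges, i.e. they are $k$-close, so $G$ is $k$-good. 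Since legs have minimal length, any chord of a leg actually connects two different legs or is a near-chord, and we can always arrange the endpoints to be distinct and the connecting paths to have length $1$; a chord contained in a triangle (case (e)) is the same picture with the third triangle edge playing the role of a length-$2$ connection, giving $2$-close attachments. The main bookkeeping here is making sure we extract $k$ \emph{disjoint} short paths, which is exactly what the Erd\H{o}s–Szekeres argument in Lemma~\ref{lem:kclose} does once we have enough edges; the constant $\frac32$ (rather than $18k^2$) is enough precisely because here the ``host'' for the edges is a single path/cycle, not an arbitrary 2-connected graph.

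\textbf{Cases (b), (c), (d): friendly or isolated objects.} A $\Theta$-friendly vertex $v$ has two neighbours on $\Theta$ lying on different legs; together with the four $x$–$y$ paths available in a $\theta$-graph between vertices on different legs, $v$ plus a short arc of $\Theta$ forms a small $\theta$-graph with a length-$1$ or length-$2$ third leg. Given $\frac32 k$ such vertices, by pigeonhole $k$ of them attach to the same pair of legs; then I would cut $\Theta$ into $k$ consecutive arcs, each containing one friendly vertex with both its $\Theta$-neighbours, and chain these arcs into a $\theta(k,i)$-necklace (with $i=1$ if the two neighbours are adjacent on the leg, $i=2$ otherwise, after a further pigeonhole to make $i$ uniform). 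For an isolated vertex (c): it has all three neighbours in $\Theta$ (as $G$ is cubic and the vertex is isolated in $G-\Theta$), so again it spans a small $\theta$-graph; for an isolated edge $uw$ (d): between them $u$ and $w$ have four neighbours on $\Theta$, giving enough structure to form a building block where we can wiggle the cycle length — this is where Theorem~\ref{thm:1degree2} / Theorem~\ref{thm:2deg2-ver1} could be invoked, or more simply we again build a $\theta$-necklace after discarding the edge $uw$ if it is redundant. In all three cases the factor $3$ (vs. $\frac32$) absorbs the extra pigeonhole needed to fix which pair of legs and which parity $i$ we use.

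\textbf{Expected main obstacle.} The delicate point is ensuring the arcs of $\Theta$ into which we partition the backbone are genuinely \emph{disjoint} and that each building block is \emph{2-connected} (or a genuine $\theta$-graph), since the special objects might cluster near the apexes $u,v$ of $\Theta$ or near each other, and two friendly vertices could share a $\Theta$-neighbour. I would deal with clustering by the standard trick of keeping only every other special object along the backbone (a further factor of $2$, again hidden in going from $\frac32 k$ to $3k$ where needed), guaranteeing disjoint supporting arcs; and I would deal with shared neighbours and degenerate small blocks by checking, using minimality of the legs, that any two $\Theta$-neighbours of a non-friendly $F(\Theta)$-vertex are within distance $2$ and that a friendly vertex's two neighbours can be taken on a fixed leg-pair, so the resulting $\theta$-graphs really have third legs of length in $\{1,2\}$. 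Once disjointness and the block structure are in hand, the necklace assembly is routine and the conclusion "$G$ is $k$-good'' is immediate from the definition.
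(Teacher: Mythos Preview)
Your overall plan is right, but you miss the key simplification and make a concrete error in (a). You write that ``a constant fraction of the special edges have both ends on the same leg''; in fact, because $\Theta$ is short each leg is an \emph{induced} path, so every such edge joins two \emph{different} legs. Then each of the $\tfrac32 k$ edges is incident with two of the three legs, so by pigeonhole some leg $P_1$ meets at least $k$ of them, and these $k$ edges are already $k$ disjoint length-$1$ paths from $P_1$ to the cycle $P_2\cup P_3$ --- disjoint because in a cubic graph every interior vertex of $\Theta$ has a unique neighbour outside $\Theta$. That is the whole argument for (a): $P_1$ and $P_2\cup P_3$ are $k$-close, with no Erd\H{o}s--Szekeres and no Lemma~\ref{lem:chord}. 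Case (b) is identical, with length-$2$ paths through the friendly vertices. Your alternative for (b), cutting $\Theta$ into arcs to build a $\theta(k,i)$-necklace, actually fails with only $\tfrac32 k$ friendly vertices: two friendly vertices attached to the same pair of legs may attach in a crossing pattern on the cycle $P_1\cup P_2$, so you cannot isolate them on disjoint arcs without an Erd\H{o}s--Szekeres step that would cost you a factor of $k$.

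In (e) you aim for closeness or a $\theta$-necklace, but the right witness is a $k$-wiggly necklace: pigeonhole puts $k$ of the triangle-edges on two legs, and each triangle sitting on an edge of the cycle $P_1\cup P_2$ supplies two paths of lengths $1$ and $2$ between its endpoints. For (c) and (d) the paper first invokes (b) to discard the $\Theta$-friendly objects; the remaining (non-friendly) ones have all their $\Theta$-neighbours on a single leg, and minimality of the legs forces those neighbours to span at most two edges of that leg. From there (c) gives a $\theta(k,1)$-necklace directly on two legs, and (d) gives $k$-closeness of a path threaded along one leg with the cycle on the other two --- Theorems~\ref{thm:1degree2} and~\ref{thm:2deg2-ver1} are not needed anywhere in this lemma. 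Your ``expected main obstacle'' about clustering and shared neighbours largely evaporates in this simpler setup, again because interior $\Theta$-vertices have a unique external neighbour.
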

\begin{proof}
\textbf{(a)} Since $\Theta$ is a short $\theta$-graph, all the legs are induced paths in $G$. If $G-E(\Theta)$ has at least $\frac{3}{2}k$ edges with both ends in $V(\Theta)$, then there is a leg, say $P_1$, such that $k$ of the edges are incident with $P_1$. Now $P_1$ and the cycle $P_2 \cup P_3$ are $k$-close.
\par
\textbf{(b)} If $G-\Theta$ has at least $\frac{3}{2}k$ vertices which are $\Theta$-friendly, then there exists a leg of $\Theta$, say $P_1$, such that at least $k$ vertices of $G-\Theta$ have one neighbour on $P_1$ and one neighbour on a different leg. Now $P_1$ and the cycle $P_2 \cup P_3$ are $k$-close.
\par
\textbf{(c)} Let $S$ denote the set of isolated vertices in $G-\Theta$ which are not $\Theta$-friendly. By~(b), we may assume $|S| \geq \frac{3}{2}k$. Since $\Theta$ is a short $\theta$-graph, the neighbours of $v$ on $\Theta$ induce a path of length 2 for every $v\in S$. Now there are two legs of $\Theta$, say $P_1$ and $P_2$, such that at least $k$ vertices in $S$ have their neighbours on $P_1$ or $P_2$. These $k$ vertices and their incident edges together with $P_1$ and $P_2$ form a $\theta (k,1)$-necklace.
\par
\textbf{(d)} We colour some vertices in $G-\Theta$ with colours 1,2,3 such that $v\in V(G)$ has colour~$i$ if $v$ is contained in an isolated edge in $G-\Theta$ and $N_\Theta (v)\subset V(P_i)$. Let $X$ denote the set of isolated edges in $G-\Theta$ in which both vertices received a colour. It is easy to see that since $\Theta$ is a short $\theta$-graph and $G$ is 3-connected, no two adjacent vertices are coloured the same. Since there are at least $3k$ isolated edges in $G-\Theta$, we may assume by (b) that $|X| \geq 3k- \frac{3}{2}k=\frac{3}{2}k$. Thus $|V(X)| \geq 3k$, so at least $k$ vertices in $V(X)$ have the same colour, say colour 1. Let $Y \subset V(X)$ denote the set of vertices coloured 1. Since $\Theta$ is a short $\theta$-graph, the neighbours on $P_1$ of any vertex in $Y$ induce a path of length at most~2 in $P_1$. Hence there is a path $P'$ contained in the subgraph of $G$ induced by $P_1$ and $Y$, containing all vertices in $Y$. Since each vertex in $Y$ has a neighbour in colour 2 or 3, the path $P'$ and the cycle $P_2 \cup P_3$ are $k$-close.
\par
\textbf{(e)}  In this case there are two legs of $\Theta$, say $P_1$ and $P_2$, such that $P_1\cup P_2$ contains~$k$ edges which are contained in triangles $T_1,\ldots ,T_k$ in $G$. Now the union of $P_1$, $P_2$ and $T_1,\ldots ,T_k$ is a $k$-wiggly necklace. 
\end{proof}

It is possible that $F(\Theta)$ contains many vertices which are not $\Theta$-friendly. Unfortunately such vertices make it more difficult to show that $G$ is $k$-good. In the following we will therefore be interested in subgraphs of $G- (\Theta \cup F(\Theta ))$.

\begin{definition}[$B^+$, $\Theta^+$]
Let $\Theta$ be a short $\theta$-graph in a 3-connected cubic graph $G$ and let $B$ be a connected subgraph of $G-\Theta$. We define $B^+$ as the subgraph of $G$ induced by $B\cup N_{F(\Theta )}(B)$ and $\Theta^+$ as the subgraph of $G$ induced by $\Theta \cup F(\Theta)$.
\end{definition}

Note that $(B^+)^+ = B^+$ for every connected subgraph $B$ of $G-\Theta$. From now on we focus on the structure of $G-\Theta^+$. We start by showing that $G$ is $k$-good if $G-\Theta^+$ contains many vertices of degree less than 2. This implies that $G$ is $k$-good if $G-\Theta^+$ has many endblocks which are not 2-connected.

\begin{theorem}\label{thm:leaves}
Let $\Theta = (P_1,P_2,P_3)$ be a short $\theta$-graph in a 3-connected cubic graph $G$. If $G-\Theta^+$ has at least $8k$ vertices of degree at most 1, then $G$ is $k$-good.
\end{theorem}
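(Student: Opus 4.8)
The plan is to prove the contrapositive: assuming $G$ is not $k$-good, I show $G-\Theta^{+}$ has fewer than $8k$ vertices of degree at most $1$. Let $S$ be this set of vertices. Since $G$ is cubic and $S\cap F(\Theta)=\emptyset$, every $v\in S$ has at most one neighbour on $\Theta$, hence at least one neighbour $w\in F(\Theta)$; and $w$, having at least two neighbours on $\Theta$, has only $v$ as a neighbour off $\Theta$, so the $F(\Theta)$-neighbours of distinct vertices of $S$ are disjoint. By Lemma~\ref{lem:easy_specialcases}(b) we may assume $G-\Theta$ has fewer than $\frac{3}{2}k$ $\Theta$-friendly vertices; since each such vertex is the $F(\Theta)$-neighbour of at most one member of $S$, all but fewer than $\frac{3}{2}k$ vertices $v\in S$ have a non-$\Theta$-friendly neighbour $w_{v}\in F(\Theta)$, whose two neighbours $a_{v},b_{v}$ then lie on a single leg $P_{\ell(v)}$; as $\Theta$ is a short $\theta$-graph they are at distance $1$ or $2$ on $P_{\ell(v)}$. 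The distance-$1$ case makes $a_{v}b_{v}$ an edge of $\Theta$ lying in the triangle $a_{v}b_{v}w_{v}$; by Lemma~\ref{lem:easy_specialcases}(e), together with the fact that the two branch vertices of $\Theta$ have no neighbour off $\Theta$ (so each triangle-edge on a leg is responsible for at most one such $v$), this disposes of fewer than $\frac{3}{2}k$ further vertices. We are left with a set $T\subseteq S$ with $|T|>5k$ such that each $v\in T$ carries a \emph{cherry} — a length-$2$ subpath $a_{v}c_{v}b_{v}$ of a leg $P_{\ell(v)}$ and a vertex $w_{v}\in F(\Theta)$ adjacent to $a_{v},b_{v},v$ — and, since $v$ has at least two neighbours in $\Theta^{+}$, a \emph{second route} to $\Theta$ internally disjoint from the cherry (a direct edge to $\Theta$, or a second cherry through another $F(\Theta)$-neighbour of $v$).

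By pigeonhole, more than $\frac{5}{3}k\geq k$ of the cherries sit on one leg, say $P_{1}$, and I would observe that they are automatically pairwise disjoint and linearly ordered along $P_{1}$: a vertex $x\in V(P_{1})$ can serve as $a_{v}$, $b_{v}$ or $c_{v}$ for at most one $v$, since in each case $x$ is adjacent off $P_{1}$ to $w_{v}$ while an interior vertex of $P_{1}$ has a unique off-$P_{1}$ neighbour and the branch vertices have none. Thus no Erd\H{o}s--Szekeres step is needed to disentangle the cherries. Since $P_{1}$ together with $P_{2}$ is a cycle containing $P_{1}$, a cyclic spine is available: consecutive cherries will be joined by the subpaths of $P_{1}$ between them, and the loop closed through $P_{2}$.

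The remaining and most delicate step is to promote each cherry, together with the second route of its vertex, to a bona fide building block of a $\theta(k,i)$-necklace ($i\in\{1,2\}$) or of a $k$-wiggly necklace as in Definitions~\ref{def:theta-necklace} and~\ref{def:wigglynecklace}. A bare cherry is a $4$-cycle $w_{v}a_{v}c_{v}b_{v}$ with two equal-length $a_{v}$--$b_{v}$ paths, so it is neither a useful $\theta$-graph nor a wiggly block by itself; the second route is exactly what breaks this symmetry. I expect the main work, and the principal obstacle, to lie in the ensuing case analysis — according to whether a vertex's second route is a second cherry on $P_{1}$, a route reaching another leg, or a short direct edge — together with the bookkeeping needed to guarantee that the blocks produced are pairwise disjoint, $2$-connected, joined along the spine in cyclic order with non-overlapping connecting paths, and (in the $\theta$-necklace case) carry a leg of length $1$ or $2$, or (in the wiggly case) contain two $x_{i}$--$y_{i}$ paths of lengths differing by $1$ or $2$; this may require one further pigeonhole to make the blocks uniform. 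Once such a necklace is exhibited, $G$ is $k$-good, contradicting the assumption, so $G-\Theta^{+}$ has fewer than $8k$ vertices of degree at most $1$, which proves the theorem.
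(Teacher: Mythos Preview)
Your plan opens along the paper's lines but the proposal stops precisely where the real argument begins, and the anticipated endgame is not quite the right one. First a local error: your disjointness argument for the cherries is flawed, since the middle vertex $c_v$ is \emph{not} adjacent to $w_v$ (only $a_v$ and $b_v$ are), so a vertex can serve as $c_v$ for one cherry and as $a_{v'}$ for another; the cherries can overlap in an edge. More seriously, your pigeonholing order is too costly: after discarding $\Theta$-friendly and distance-$1$ cases you have only $|T|>5k$, and pigeonholing on the cherry's leg drops you to $\tfrac{5}{3}k$; the ``one further pigeonhole'' you anticipate on the type of the second route then leaves fewer than $k$ vertices, which is not enough for a $k$-necklace.

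The paper avoids this by classifying each vertex of $L'$ (your $S$ minus the $\Theta$-friendly part) according to \emph{both} of its routes into $\Theta^+$ simultaneously, partitioning $L'$ into three sets $X$, $Y$, $Z$ depending on whether the two routes land on different legs (via two $F(\Theta)$-vertices, or one $F(\Theta)$-vertex and one direct edge) or on the same leg. Crucially, in the cross-leg cases $X$ and $Y$ the paper does \emph{not} build a necklace at all: it finds a path and a cycle that are $k$-close (one leg, suitably rerouted through the relevant $F(\Theta)$-vertices, versus the cycle formed by the other two legs). Only in the same-leg case $Z$ does the argument produce a $k$-wiggly necklace, via a short local analysis of the possible attachment patterns. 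Your plan to promote every cherry to a necklace block therefore targets the wrong structure in two of the three cases; to complete the proof you should instead split on both routes before pigeonholing on the leg, and in the cross-leg cases aim for $k$-closeness rather than a necklace.
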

\begin{proof}
Let $L$ denote the set of vertices in $G-\Theta^+$ which have degree at most 1, so $|L|\geq 8k$. Every vertex in $L$ has either at least two neighbours in $F(\Theta )$ or exactly one neighbour in each of $F(\Theta )$ and $\Theta$. Let $L'\subseteq L$ denote the set of vertices having no $\theta$-friendly neighbour. By Lemma~\ref{lem:easy_specialcases}~(b) we may assume $|L'|\geq |L|-\frac{3}{2}k > 6k$. For $i \in \{1,2,3\}$, let $F_i$ be the set of vertices in $F(\Theta )$ with two neighbours on $P_i$. For $i, j\in \{1,2,3\}$ with $i\neq j$, let $X_{i,j}$ be the set of vertices in $L'$ having a neighbour in both $F_i$ and $F_j$. Let $Y_{i,j}$ be the set of vertices in $L'$ having a neighbour in $F_i$ and in $P_j$. Finally, let $Z_i$ be the set of vertices in $L'$ having at least two neighbours in $F_i$ or exactly one in both $F_i$ and $P_i$. Let
\begin{align*}
    X & = X_{1,2} \cup X_{2,3} \cup X_{1,3}\\
    Y & = Y_{1,2} \cup Y_{1,3} \cup Y_{2,1} \cup Y_{2,3} \cup Y_{3,1} \cup Y_{3,2} \\
    Z & = Z_{1} \cup Z_{2} \cup Z_{3}
\end{align*}
Clearly, $X\cup Y \cup Z = L'$, so $|X| + |Y| + |Z| > 6k$. We distinguish three cases.\\

\textbf{Case 1:} $|X|\geq \frac{3}{2}k$.\\
We may assume that $|X_{1,2}\cup X_{1,3}|\geq k$. Let $F_i'\subseteq F_i$ be the set of vertices that have a neighbour in $X_{1,2}\cup X_{1,3}$. Since $\Theta$ is a short $\theta$-graph, for $i\in \{1,2,3\}$ there exists a path $P_i'$ in $G[V(P_i)\cup F_i']$ which contains $F_i'$. We can combine the paths $P_2'$ and $P_3'$ to a cycle $C$ in $\Theta^+$ which contains $F_2'\cup F_3'$ but no interior vertices of $P_1$. Now $P_1'$ and $C$ are $k$-close since every $x\in X_{1,2}\cup X_{1,3}$ has a neighbour in both $P_1'$ and $C$.\\

\textbf{Case 2:} $|Y|\geq 3k$.\\
We may assume that $|Y_{1,2}\cup Y_{1,3}|\geq k$.
For every $x\in Y_{1,2}\cup Y_{1,3}$, let $y(x)\in F_1$ be a neighbour of $x$. Since $\Theta$ is a short $\theta$-graph, there exists a path $P_1'$ in $G[V(P_1)\cup F_1]$ which contains $y(x)$ for every $x\in Y_{1,2}\cup Y_{1,3}$. Now $P_1'$ and the cycle $P_2\cup P_3$ are $k$-close.\\

\textbf{Case 3:} $|Z|\geq \frac{3}{2}k$.\\ 
We may assume that $|Z_{1}\cup Z_{2}|\geq k$.
For every $x\in Z_i$, let $P_x$ denote the shortest subpath of $P_i$ containing $N_{P_i}(x)$ and the neighbours of $N_{F_i}(x)$ on $P_i$. Since $\Theta$ is a short $\theta$-graph, the length of $P_x$ is at most 4. Note that this implies that $x$ has at most two neighbours in $F_i$. Let $G_x$ be the subgraph of $G$ induced by $P_x$, $N_{F_i}(x)$, and $x$. We define a subgraph $H_x$ of $G_x$ in the following way.
If $G_x$ contains a cycle $C_x$ of length 3 or 4 such that $|E(C_x\cap P_x)| = 1$, then we set $H_x = C_x$. It is easy to check that~$G_x$ contains such a cycle unless $P_x$ has length 3 and $x$ has two neighbours in $F_i$, as shown in Figure~\ref{fig:thm_5_5}(b). In this case, we set $H_x = G_x$. Note that if $H_x = G_x$, then $x$ and the two endvertices of $P_x$ form a 3-cut in $G$. It easy to see that in each case the endvertices of $H_x\cap P_x$ can be joined by two paths in $H_x$ whose lengths differ by 1 or 2. Moreover, the graphs $H_x$ are pairwise disjoint. Hence, the graph formed by the union of $P_1\cup P_2$ and all the graphs $H_x$ with $x\in Z_1 \cup Z_2$ is a $k$-wiggly necklace.
\end{proof}

\begin{figure}[H]
    \centering
    \begin{subfigure}[b]{0.31\textwidth}
        \includegraphics[width=\textwidth]{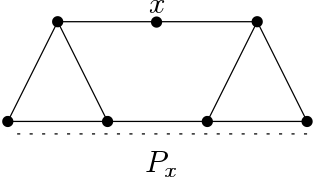}
        \caption{$|E(P_x)| \in \{3,4\}$}
        \label{fig:thm_5_5_1}
    \end{subfigure}
    \hspace{2 cm}
    \begin{subfigure}[b]{0.31\textwidth}
        \includegraphics[width=\textwidth]{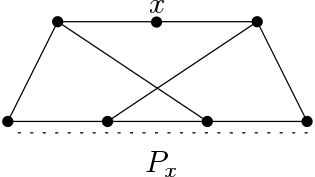}
        \caption{$|E(P_x)| =3$}
        \label{fig:thm_5_5_2}
    \end{subfigure}
    \par\bigskip
    \begin{subfigure}[b]{0.31\textwidth}
    \centering
        \includegraphics[width=.7\textwidth]{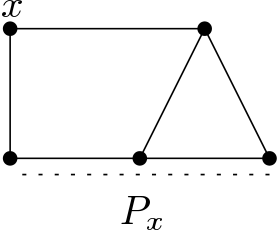}
        \caption{$|E(P_x)| \in \{2,3\}$}
        \label{fig:thm_5_5_3}
    \end{subfigure}
    \hspace{2 cm}
    \begin{subfigure}[b]{0.31\textwidth}
    \centering
        \includegraphics[width=.7\textwidth]{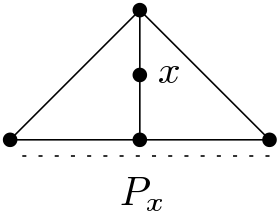}
        \caption{$|E(P_x)| =2$}
        \label{fig:thm_5_5_4}
    \end{subfigure}
    \caption{Proof of Theorem~\ref{thm:leaves}. The four ways $x \in Z$ can be joined to $\Theta^+$.}
    \label{fig:thm_5_5}
\end{figure}

\begin{definition}[$\Theta$-isolated, $\Theta$-connecting]
We say a connected subgraph $B$ of $G-\Theta^+$ is \textbf{$\Theta$-connecting} if $N_{\Theta} (B^+) \not\subseteq V(P_i)$ for every $i\in \{1,2,3\}$. We say $B$ is \textbf{$\Theta$-isolated} if $N_{\Theta}(B^+)\neq \emptyset$ and $B$ is not $\Theta$-connecting.
\end{definition}

Note that each 2-connected endblock of $G-\Theta^+$ is either $\Theta$-connecting or $\Theta$-isolated.

\subsection{Many $\Theta$-connecting 2-connected endblocks in $G-\Theta^+$}

Given a subgraph $H$ of $G-\Theta^+$, we are often interested in how it attaches to $\Theta$. We are typically interested in $N_{\Theta}(H^+)$, but we only want to include at most one neighbour for each vertex in $H^+-H$. For this purpose we define $\Theta$-projections which map vertices in $N_H(\Theta^+)$ to neighbours in $\Theta$ or vertices at distance 2 in $\Theta$.

\begin{definition}[$\Theta$-projection $\pi$]
Let $\Theta$ be a short $\theta$-graph in $G$ and $H = G-\Theta^+$. A function $\pi : N_H(\Theta^+) \rightarrow V(\Theta)$ is called a \textbf{$\Theta$-projection} if $\pi (v)$ is a vertex in $\Theta$ whose distance to $v$ in $G$ is minimal.
\end{definition}

By definition, if $v\in G-\Theta^+$ has a neighbour $u$ in $\Theta$, then $\pi (v) = u$. If $v\in G-\Theta^+$ has a neighbour in $\Theta^+$ but not in $\Theta$, then there exists a vertex $w\in F(\Theta )$ which is adjacent to both $v$ and $\pi (v)$. We now show that $G$ is $k$-good if $G-\Theta^+$ contains sufficiently many $\Theta$-connecting 2-connected endblocks.

\begin{theorem}\label{thm:theta-connecting}
Let $\Theta = (P_1,P_2,P_3)$ be a short $\theta$-graph in a cubic 3-connected graph $G$ and $H=G-\Theta^+$. If the number of $\Theta$-connecting 2-connected endblocks in $H$ is at least $21k^2+\frac{3}{2}k$, then $G$ is $k$-good.
\end{theorem}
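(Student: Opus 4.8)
The goal is to extract from the many $\Theta$-connecting endblocks a structure that Theorem~\ref{thm:kgood} recognizes as $k$-good. Fix a $\Theta$-projection $\pi$. Each $\Theta$-connecting $2$-connected endblock $B$ of $H=G-\Theta^+$ has $N_\Theta(B^+)$ meeting at least two legs; by the pigeonhole principle, among the $21k^2+\tfrac32 k$ such endblocks there is a fixed pair of legs, say $P_1$ and $P_2$, and a subfamily $\mathcal{B}$ of roughly $7k^2+\tfrac12 k$ endblocks each having a neighbour-projection on both $P_1$ and $P_2$. Since $G$ is cubic and each $B$ is an endblock of $H$, the block $B$ meets $B^+\setminus B\cup(\text{rest of }H)$ through very few edges — essentially $B^+$ attaches to $\Theta^+$ through a bounded-size cut — so for each $B$ we can pick two vertices $a_B\in P_1$, $b_B\in P_2$ in $N_\Theta(B^+)$ together with an $a_B$--$b_B$ path that runs through $B$ (using $2$-connectivity of $B$, or Theorem~\ref{thm:1degree2} if we need control on parities). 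The first thing I would do is make these choices disjoint across $\mathcal{B}$: since the $B^+$ are essentially vertex-disjoint (endblocks of $H$ share at most a cut-vertex, and the $F(\Theta)$-vertices attached to distinct endblocks can be assumed distinct after discarding few endblocks via Lemma~\ref{lem:easy_specialcases}), the resulting ``bridging'' paths across $\Theta$ form a matching-like family joining $P_1$ and $P_2$.

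**Reduction to earlier lemmas.** Now contract or abstract each $B^+$ to a single edge between its two attachment points $a_B$ and $b_B$: the union of $\Theta$ (in fact of the sub-$\theta$-graph formed by $P_1,P_2,P_3$) together with these $|\mathcal{B}|$ chords is exactly the setup of Lemma~\ref{lem:theta+matching}, with $M$ the set of chords and $|M|\ge 3k^2$ after the pigeonhole reductions. That lemma produces a cycle $C^\ast$ in the abstract graph using at least $k$ of the chords. Lifting back, $C^\ast$ corresponds to a closed walk in $G$ that traverses $k$ of the blocks $B$; but what we actually want is a $k$-close pair or a $\theta$-necklace, not just one cycle. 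So instead of lifting to a single cycle I would argue as follows: the $k$ chords selected by Lemma~\ref{lem:theta+matching} all lie ``nested'' or ``crossing'' in a controlled way on $P_1$ and $P_2$, and this is precisely enough to assemble a $\theta(k',i)$-necklace or to exhibit a path and a cycle that are $k'$-close, where $k'$ is $k$ up to the polynomial loss absorbed by the $21k^2$ bound. Concretely: take the cycle $P_1\cup P_2$; each selected block $B$ together with the two short connectors from $B$ to its projection points on $P_1\cup P_2$ gives a ``pendant 2-connected piece'' attached at two points of this cycle — and with $k$ of them in cyclic order along $P_1\cup P_2$ one gets a $\mathcal{B}$-necklace, which we make $k$-wiggly by invoking Theorem~\ref{thm:1degree2} inside each $B$ (treating the two attachment vertices plus at most one extra degree-$2$ vertex) to get two $a_B$--$b_B$ paths differing in length by $1$ or $2$.

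**The main obstacle.** The genuinely delicate point is bookkeeping the interaction between the endblocks, the vertices of $F(\Theta)$, and the ambient cubic constraint: a single vertex of $F(\Theta)$ could in principle be the bridge for two different endblocks, and an endblock of $H$ is only an endblock of $H$, not of $G-\Theta$, so its attachment to the outside need not be a clean $2$- or $3$-cut. Handling this is where Lemma~\ref{lem:easy_specialcases} (parts (a)--(e)) and Theorem~\ref{thm:leaves} earn their keep: we may assume there are only $O(k)$ $\Theta$-friendly vertices, only $O(k)$ vertices of degree $<2$ in $G-\Theta^+$, and so on, so all but $O(k)$ of our $21k^2+\tfrac32 k$ endblocks have the ``clean'' attachment behaviour, and $21k^2$ is chosen large enough that after all these linear-size deletions we still retain $\gg 3k^2$ usable endblocks for the pigeonhole-plus-Lemma~\ref{lem:theta+matching} step. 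A secondary nuisance is ensuring the two paths inside each $B$ can be taken to differ in length by exactly $1$ or $2$ when $B$ carries an extra degree-$2$ vertex coming from a third attachment edge — but that is exactly the content of Theorem~\ref{thm:1degree2}, which we apply with the appropriate two or three special vertices. Once the necklace (or $k$-close pair) is in hand, $k$-goodness, and hence the conclusion, is immediate.
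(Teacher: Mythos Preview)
Your outline captures exactly one branch of the paper's proof (its Case~2.2), but it rests on an unjustified structural claim that hides the real difficulty.

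The gap is the assertion that each $2$-connected endblock $B$ ``attaches to $\Theta^+$ through a bounded-size cut'' and hence has only two or three vertices of degree $2$, so that Theorem~\ref{thm:1degree2} applies. This is false: being an endblock of $H$ bounds the number of cutvertices of $H$ in $B$ by one, but it says nothing about the number of vertices of $B$ with a neighbour in $\Theta^+$, each of which is a degree-$2$ vertex of $B$. A block may have arbitrarily many such vertices, and then neither Theorem~\ref{thm:1degree2} nor Theorem~\ref{thm:2deg2-ver1} is available, so your $\mathcal{B}$-necklace need not be $k$-wiggly. The paper handles this by colouring each attachment vertex by the leg it projects to, then splitting into cases. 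If many blocks have two attachments of the \emph{same} colour (Case~1), the wiggly-necklace approach is abandoned entirely: those two same-colour attachments become a chord of one leg, Lemma~\ref{lem:chord} produces a path through $k$ such chords, and one obtains a path $k$-close to the cycle formed by the other two legs. If many blocks have all attachments of distinct colours, then they have at most three coloured vertices plus possibly one cutvertex; the four-degree-$2$ case (Case~2.1) again needs a separate construction via Lemma~\ref{lem:chord} and Lemma~\ref{lem:path_system}, and only the exactly-three case (Case~2.2) uses the Lemma~\ref{lem:theta+matching}/Theorem~\ref{thm:1degree2} route you describe. The constant $21k^2$ is calibrated to split among these qualitatively different cases, not merely to absorb the linear noise from Lemma~\ref{lem:easy_specialcases}.
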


\begin{proof} 
Let $S$ be the set of vertices which are contained in a $\Theta$-connecting 2-connected endblock of $H$ and which have a neighbour in $\Theta^+$. We colour the vertices in $S$ with colours 0, 1, 2, and 3 in the following way: If for some $i\in \{1,2,3\}$, the vertex $v$ has a neighbour in~$P_i$, or $v$ has a neighbour in $F(\Theta)$ which has two neighbours in $P_i$, then $v$ receives colour~$i$. Otherwise, $v$ receives colour 0. Notice that $v$ receives colour 0 if and only if $v$ has a neighbour in $F(\Theta)$ which is $\Theta$-friendly.
Let $\B$ be the set of $\Theta$-connecting 2-connected endblocks in $H$ which contain no vertices in colour 0. Notice that the blocks in $\B$ are pairwise disjoint since $G$ is cubic. By Lemma~\ref{lem:easy_specialcases} (b) we may assume that at most $\frac{3}{2}k$ vertices are coloured 0 and thus $|\B|\geq 21k^2$. Notice that each block in $\B$ contains vertices in at least two different colours since the blocks in $\B$ are $\Theta$-connecting. Let $\B_0$ be the sets of blocks in $\B$ which contain no two vertices of the same colour. For $i\in \{1,2,3\}$, let $\B_i$ be the set of blocks in $\B$ which contain at least two vertices in colour $i$. Notice that $\B_0$ is disjoint from $\B_1\cup \B_2 \cup \B_3$ and thus $|\B_0|+|\B_1\cup \B_2 \cup \B_3|\geq 21k^2$.  Let $\pi : N_H(\Theta^+) \rightarrow V(\Theta)$ be a $\Theta$-projection. We distinguish the following two cases.\\

\textbf{Case 1:} $|\B_1\cup \B_2 \cup \B_3|\geq 8k^2$.\\
We may assume $|\B_1|\geq \frac{8}{3}k^2$. For each $B\in \B_1$, let $u_B$ and $v_B$ denote two vertices coloured~1 in $B$. Let $G_1$ be the graph obtained from $P_1$ by adding the edges $\pi (u_B)\pi (v_B)$ for every $B\in \B_1$. Finally, let $P_B$ be a $\pi (u_B)-\pi (v_B)$ path whose interior vertices lie in $B^+$ and which contains a coloured vertex in $B$ whose colour is not 1 (such a path exists since $B$ is 2-connected). See Figure~\ref{fig:thm_5_10_case_1_and_case_2_1_1}(a) for an illustration. By Lemma~\ref{lem:chord}, there exists a path $P'$ in $G_1$ containing at least $k$ edges of the form $\pi (u_B)\pi (v_B)$. We obtain a path $P$ in $G$ from $P'$ by replacing each edge of the form $\pi (u_B)\pi (v_B)$ by $P_B$. Now $P$ and the cycle $P_2\cup P_3$ are $k$-close.\\

\textbf{Case 2:} $|\B_0|\geq 13k^2$.\\
Each block in $\B_0$ contains at most three coloured vertices, and at most one cutvertex in $H$. In particular, each block in $\B_0$ contains at most four vertices of degree 2. Let $\B_4$ denote the set of blocks in $\B_0$ containing four vertices of degree 2, and $\B' = \B_0\setminus \B_4$. Notice that by 3-connectivity of $G$, each block in $\B'$ contains precisely three vertices of degree 2 and at least two of them are coloured. We have $|\B_4|+|\B'|\geq 13k^2$.\\

\textbf{Case 2.1:} $|\B_4|\geq 10k^2$.\\
Each $B\in \B_4$ contains precisely one vertex of each colour and precisely one vertex $x_B$ which is a cutvertex in $H$. Let $y_B,z_B\in B$ be coloured vertices such that $y_B$ has colour $1$ and $z_B$ has colour 2. We define $Q_B$ as a $\pi (y_B)-x_B$ path which contains $z_B$ and whose interior vertices lie in $B^+$. Let $X=\{x_B:B\in \B_4\}$. We define $X_1\subset X$ as the subset of vertices which lie in a component of $H$ containing only one vertex of $X$. Let $X_2 = X\setminus X_1$. Note that $|X_1|+|X_2|\geq 10k^2$. We consider two cases.\\

\textbf{Case 2.1.1:} $|X_1|\geq 8k^2$.\\
For every $x_B\in X_1$, let $P_B$ be an $x_B-\Theta$ path with $P_B \cap B = \{x_B\}$ and let $x'_B$ be the endvertex of $P_B$ on $\Theta$, see Figure~\ref{fig:thm_5_10_case_1_and_case_2_1_1}(b). We may assume that at least $\frac{8}{3}k^2$ vertices of the form $x'_B$ lie on~$P_1$. Let $G_1$ be the graph obtained from $P_1$ by adding the edges $\pi (y_B)x'_B$ for every $B\in \B_4$ with $x_B\in X_1$ and $x_B'\in V(P_1)$. By Lemma~\ref{lem:chord}, there exists a path $P'$ in $G_1$ containing at least $k$ edges of the form $\pi (y_B)x_B'$. We obtain a path $P$ in $G$ from $P'$ by replacing each edge of the form $\pi (y_B)x'_B$ by $Q_B\cup P_B$. Now $P$ and the cycle $P_2\cup P_3$ are $k$-close.\\

\begin{figure}[]
    \centering
    \begin{subfigure}[b]{0.4\textwidth}
        \includegraphics[width=\textwidth]{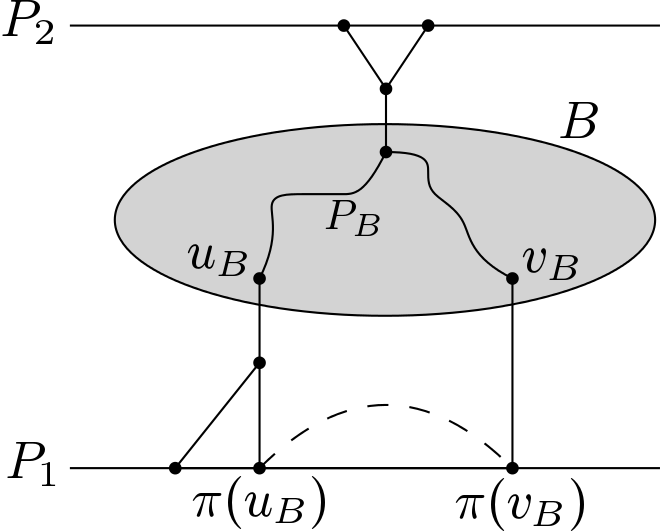}
        \caption{Case 1}
    \end{subfigure}
    \hspace{1cm}
    \begin{subfigure}[b]{0.5\textwidth}
        \includegraphics[width=\textwidth]{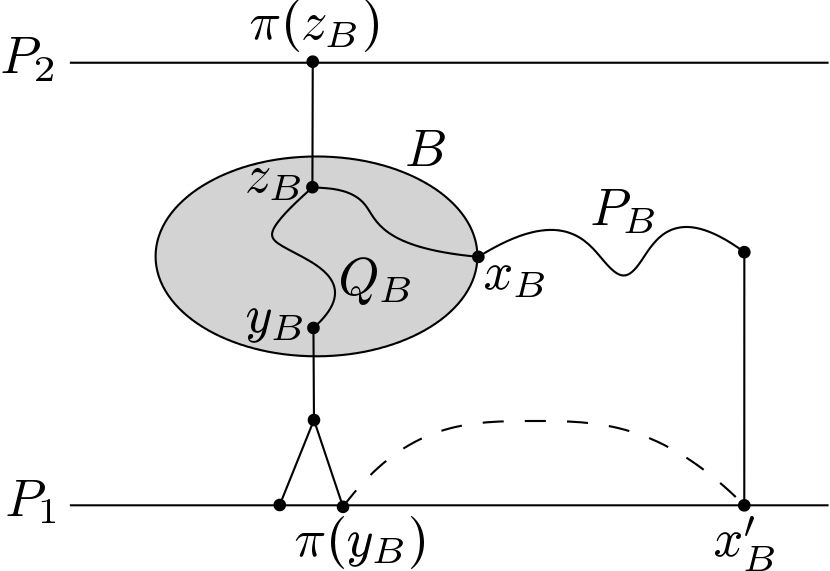}
        \caption{Case 2.1.1}
    \end{subfigure}
    \caption{Proof of Theorem~\ref{thm:theta-connecting}}
    \label{fig:thm_5_10_case_1_and_case_2_1_1}
\end{figure}

\textbf{Case 2.1.2:} $|X_2|\geq 2k^2$.\\
Let $\PP$ be a collection of disjoint paths in $H$ such that for every $P\in \PP$ the endvertices of~$P$ are in $X_2$ and $|\PP|$ is maximal. By Lemma~\ref{lem:path_system}, in every component of $H$ there is at most one vertex of $X_2$ which is not an endvertex of a path in $\PP$. This implies $|\PP|\geq \frac{1}{3}|X_2|\geq \frac{2}{3}k^2$. For any two blocks $B,B'\in \B_4$ which are joined by an $x_B-x_{B'}$ path $P\in \PP$, let $Q_{B,B'}$ be the path $Q_B\cup P\cup Q_{B'}$. Let $G_1$ be the graph obtained from $P_1$ by adding the edges $\pi (y_B)\pi (y_{B'})$ for every pair of blocks $B,B'\in \B_4$ which are joined by a path in $\PP$, see Figure~\ref{fig:thm_5_10_case_2_1_2_and_case_2_2}(a). Since $|\PP| \geq \frac{2}{3}k^2\geq \frac{8}{3}\cdot \frac{k}{2}(\frac{k}{2}-1)+1$, by Lemma~\ref{lem:chord} there exists a path $P'$ in $G_1$ containing at least $\frac{k}{2}$ edges of the form $\pi (y_B)\pi (y_{B'})$. We obtain a path $P$ in $G$ from $P'$ by replacing each edge of the form $\pi (y_B)\pi (y_{B'})$ by $Q_{B,B'}$. Since the path $Q_{B,B'}$ contains $z_B$ and $z_{B'}$, the path $P$ and the cycle formed by $P_2$ and $P_3$ are $k$-close.\\

\textbf{Case 2.2:} $|\B'|\geq 3k^2$.\\
For each $B\in \B'$, let $u_B$ and $v_B$ denote two distinct coloured vertices in $B$. Let $P_B$ be a $\pi (u_B)-\pi (v_B)$ path whose interior vertices lie in $B^+$ and let $G'$ be the graph obtained from $\Theta$ by adding the edges $\pi (u_B)\pi (v_B)$ for every $B\in \B'$, see Figure~\ref{fig:thm_5_10_case_2_1_2_and_case_2_2}(b). By Lemma~\ref{lem:theta+matching}, there exists a cycle $C'$ in $G'$ containing at least $k$ edges of the form $\pi (u_B)\pi (v_B)$. 
Let $\B''\subset \B'$ be the set of blocks $B$ for which $\pi (u_B)\pi (v_B) \in E(C')$. 
Let $C$ be the cycle in $G$ which is obtained from $C'$ by replacing $\pi (u_B)\pi (v_B)$ by $P_B$ for every $B\in \B''$. By Theorem~\ref{thm:1degree2}, every $B\in\B''$ contains two $u_B-v_B$ paths whose lengths differ by 1 or 2. Thus, the $\B''$-necklace formed by the union of $C$ and the blocks in $\B''$ is $k$-wiggly.
\end{proof}

\begin{figure}[]
    \centering
    \begin{subfigure}[b]{0.58\textwidth}
        \includegraphics[width=\textwidth]{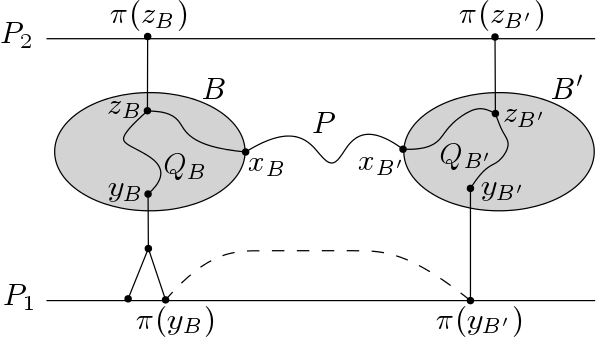}
        \caption{Case 2.1.2}
        \label{fig:thm_5_10_case_2_1_2}
    \end{subfigure}
    \hspace{1cm}
    \begin{subfigure}[b]{0.34\textwidth}
        \includegraphics[width=\textwidth]{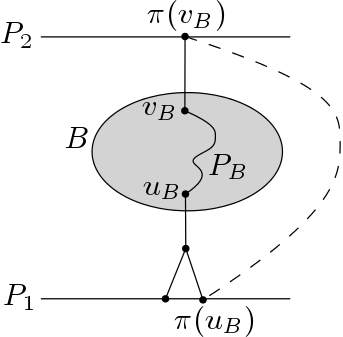}
        \caption{Case 2.2}
        \label{fig:thm_5_10_case_2_2}
    \end{subfigure}
    \caption{Proof of Theorem~\ref{thm:theta-connecting}}
    \label{fig:thm_5_10_case_2_1_2_and_case_2_2}
\end{figure}

\subsection{Many $\Theta$-isolated 2-connected endblocks in $G-\Theta^+$}

We now focus on $\Theta$-isolated subgraphs $H$ in $G-\Theta^+$. Typically $H$ is an endblock or a connected component of $G-\Theta^+$, so $|N_{\Theta}(H^+)|\geq 2$. 

\begin{definition}[$\Theta$-span]
Let $B$ be a $\Theta$-isolated subgraph of $G-\Theta^+$ with $N_{\Theta }(B^+)\subset V(P_i)$. Let $Q_B$ be the shortest subpath of $P_i$ containing $N_{\Theta}(B^+)$. The \textbf{$\Theta$-span} of $B$, denoted $\Sp(B)$, is defined as the vertex set of $Q_B$. 
\end{definition}

The $\Theta$-span is very useful for studying the interplay between different $\Theta$-isolated subgraphs. If there are many $\Theta$-isolated endblocks whose $\Theta$-spans are pairwise disjoint, then it is easy to find a long $\theta$-necklace or a $k$-wiggly necklace. If there are many $\Theta$-isolated endblocks whose $\Theta$-spans have pairwise non-empty intersection, then we distinguish two cases depending on whether many of them are pairwise crossing according to the following definition.

\begin{definition}[crossing subgraphs]
Let $B_1$ and $B_2$ be two $\Theta$-isolated connected subgraphs of $G-\Theta^+$ with $N_{\Theta}(B_1^+\cup B_2^+)\subset V(P_i)$. We say $B_1$ and $B_2$ are \textbf{crossing} if there exist vertices $x_1,y_1\in N_{\Theta} (B_1^+)$ and $x_2,y_2\in N_{\Theta}(B_2^+)$ such that $x_2\in x_1P_iy_1$, $y_1\in x_2P_iy_2$.
\end{definition}

If $B_1$, $B_2$ are two disjoint $\Theta$-isolated endblocks with $\Sp (B_1) \cap \Sp(B_2) \neq \emptyset$ and $B_1$, $B_2$ are not crossing, then one of the two $\Theta$-spans is contained in the other. This motivates the following definition.

\begin{definition}[$<_{\Theta}$]
Let $B_1$ and $B_2$ be two disjoint $\Theta$-isolated subgraphs of $G-\Theta^+$. We write $B_1 <_{\Theta} B_2$ if and only if $B_1$ and $B_2$ are not crossing and $\Sp(B_1)\subset \Sp(B_2)$.
\end{definition}

It is easy to see that two disjoint $\Theta$-isolated subgraphs $B_1,B_2$ of $G-\Theta^+$ satisfy $B_1<_{\Theta}B_2$ if and only if $\Sp(B_1)\subset \Sp(B_2)$ and $\Sp(B_1) \cap N_{\Theta}(B_2^+)=\emptyset$. 

\begin{definition}[$\Theta$-chain]
A \textbf{$\Theta$-chain} $\C$ of length $n$ is a sequence $(H_1,H_2,\ldots ,H_n)$ of pairwise disjoint $\Theta$-isolated components of $G-\Theta^+$ with $H_i <_{\Theta} H_{i+1}$ for $i\in \{1,\ldots ,n-1\}$. We say $\C$ is \textbf{special} if each $H_i$ has only 2-connected endblocks.
\end{definition}

Note that if $B_1,B_2,B_3$ are pairwise disjoint and $B_1<_{\Theta}B_2$ and $B_2<_{\Theta}B_3$, then $\Sp(B_1) \cap N_{\Theta}(B_3^+) \subseteq \Sp(B_2) \cap N_{\Theta}(B_3^+) = \emptyset$ and thus $B_1<_{\Theta}B_3$.
This shows that every subsequence of a $\Theta$-chain is again a $\Theta$-chain.\\
We now prove that if $G$ contains a long special $\Theta$-chain, then $G$ is $k$-good. The proof consists of several cases depending on the structure of the components in the $\Theta$-chain. As a rough guideline, if many endblocks have only few vertices of degree 2, then we find a $k$-wiggly necklace by using the results from Section 4. If there are many endblocks with many vertices of degree 2, then we construct a $\theta (k,i)$-necklace with $i\in \{1,2\}$.

\begin{lemma}\label{lem:theta-chain}
Let $\Theta = (P_1,P_2,P_3)$ be a shortest $u,v$-$\theta$-graph in a cubic 3-connected graph~$G$. If $G$ contains a special $\Theta$-chain $\C$ of length $5k$, then $G$ is $k$-good.
\end{lemma}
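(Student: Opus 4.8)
The aim is to exhibit inside $G$ one of the three configurations that make $G$ $k$-good: a path and a cycle that are $k$-close, a $\theta(k,i)$-necklace with $i\in\{1,2\}$, or a $k$-wiggly necklace. I first record the structural facts. Since every $\Sp(H_j)$ is the vertex set of a subpath of a leg of $\Theta$ and $\Sp(H_1)\subsetneq\dots\subsetneq\Sp(H_{5k})$, all of $H_1,\dots,H_{5k}$ attach to one common leg, say $P_1$; writing $Q_{H_j}=a_jP_1b_j$ for the subpath of $P_1$ with vertex set $\Sp(H_j)$, the relation $H_j<_\Theta H_{j+1}$ forces $Q_{H_j}$ to lie in the interior of $Q_{H_{j+1}}$ with $\Sp(H_j)\cap N_\Theta(H_{j+1}^+)=\emptyset$, so along $P_1$ we get the order $a_{5k}\prec\dots\prec a_1\prec b_1\prec\dots\prec b_{5k}$ and $H_{j+1}$ reaches $\Theta$ (through $H_{j+1}^+$) only inside the two gaps $a_{j+1}P_1a_j$ and $b_jP_1b_{j+1}$. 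Because $H_j$ is $\Theta$-isolated, no vertex of $H_j^+$ is $\Theta$-friendly, so every vertex of $N_{F(\Theta)}(H_j)$ has both its $\Theta$-neighbours on $P_1$; and since $\C$ is special and $G$ is cubic and $3$-connected, each $H_j$ has a $2$-connected endblock, every such endblock $B$ has $|N_{P_1}(B^+)|\ge2$ with all exit edges of $B^+$ — except at most the single edge to the cutvertex of $H_j$ lying in $B$ — reaching $P_1$ directly or through one $F(\Theta)$-vertex, the graphs $B^+$ over the chosen endblocks are pairwise disjoint and disjoint from $\Theta$, and altogether $H_j$ has at least three exit vertices whose exits reach $P_1$ (at least four if $H_j$ is not $2$-connected), all landing in the two gaps of $H_{j-1}$.

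Next I run a dichotomy on how densely the endblocks meet $P_1$. Fix a suitably large threshold $T(k)$. If some chosen endblock $B$ of $H_j$ has at least $T(k)$ exit vertices, let $S\subseteq E(B)$ contain one edge of $B$ at each of them; by Theorem~\ref{thm:chord_adv} there is a path $W$ in $B$ through at least $k+1$ of these exit vertices, and extending $W$ beyond each such vertex along its exit edge (length $1$) or through the corresponding $F(\Theta)$-vertex (length $2$) produces at least $k$ pairwise disjoint paths of length at most $2$ from $W$ to $P_1$ (using cubicity to see these are disjoint and avoid $\Theta$); since $W\subseteq B$ is disjoint from the cycle $P_1\cup P_2$, the path $W$ and the cycle $P_1\cup P_2$ are $k$-close, so $G$ is $k$-good. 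Hence I may assume every chosen endblock $B$ of $H_j$ has fewer than $T(k)$ exit vertices. For such a $B$, consider $B^+$ together with the subpath of $P_1$ spanned by $N_{P_1}(B^+)$; after suppressing the interior vertices of that subpath which are not attachment points of $B^+$ one obtains a $2$-connected subcubic graph whose only vertices of degree $2$ are the two extreme attachment vertices and, if present, the cutvertex of $H_j$ in $B$, so Theorem~\ref{thm:1degree2} (or Theorem~\ref{thm:2deg2-ver1} and Lemma~\ref{lem:2deg2-ver2} when a fourth exceptional vertex survives) yields two paths between the two extreme attachment vertices whose lengths differ by $1$ or $2$; after undoing the suppression these become two walks of equal extra length, so this piece is a \emph{wiggly bead} anchored at two vertices of $P_1$. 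When instead $H_j$ has three exit vertices landing inside one of the two gaps of $H_{j-1}$, the same enlargement produces a $\theta$-graph over a $P_1$-segment of that gap with a third leg of length $1$ or $2$.

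It remains — and here lies the essential difficulty — to thread $k$ of these beads (or $\theta$-blocks) cyclically into a necklace. The obstruction is that the $P_1$-segments underlying different beads are a priori nested, mirroring the nesting of the $\Sp(H_j)$, and nested chords over a cycle do not form a necklace. To get around this I exploit the gap structure: by the pigeonhole principle at least $\frac{5}{2}k$ of the $H_j$ have two exit vertices in the same gap — say, in each case, the left gap $a_jP_1a_{j-1}$ — and these left gaps are pairwise disjoint and ordered left to right along $P_1$, so the corresponding beads can be chosen to sit in pairwise disjoint $P_1$-segments lined up in that order; chaining them by the intervening $P_1$-segments and closing the cycle through the far end of $P_1$ and the leg $P_2$ yields a $k$-wiggly necklace (or a $\theta(k,i)$-necklace with $i\in\{1,2\}$ when the pieces carried short third legs), whence $G$ is $k$-good by Theorem~\ref{thm:kgood}. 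What requires care is: selecting, when $H_j$ is not $2$-connected, an endblock that genuinely lies in the chosen gap; checking that the connecting $P_1$-segments avoid the interiors of all beads despite attachments routed through $F(\Theta)$-vertices; and disposing of the residual case in which many $H_j$ are themselves $2$-connected and attach to $P_1$ on both sides of $\Sp(H_{j-1})$, treated by using each such $H_j$ directly as a $\theta$-block spanning the $P_1$-segment it straddles, again closing up through $P_2$. I expect essentially all of the effort to be in this last bookkeeping rather than in any single new idea.
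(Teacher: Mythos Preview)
Your proposal contains a genuine gap in the ``wiggly bead'' construction. You form the auxiliary graph $B^+\cup(\text{$P_1$-segment})$, suppress the interior $P_1$-vertices that are not attachment points, and then invoke Theorem~\ref{thm:1degree2} to obtain two paths between the extreme attachment vertices whose lengths differ by $1$ or $2$. You then claim that ``after undoing the suppression these become two walks of equal extra length''. This is false: the two paths in the suppressed graph need not use the same suppressed $P_1$-edges, and different suppressed edges correspond to $P_1$-subpaths of arbitrarily different lengths. For a concrete failure, take $B$ a $4$-cycle $b_1b_2b_3b_4$ with exits $b_1\to p_1$, $b_2\to p_{50}$, $b_3\to p_{100}$ and $b_4$ the cutvertex; in the suppressed graph the path $p_1p_{50}p_{100}$ has length $2$ and $p_1b_1b_2b_3p_{100}$ has length $4$, a difference of $2$, but after un-suppression their lengths become $99$ and $4$. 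Thus the bead is not wiggly in $G$, and no necklace built from such beads is $k$-wiggly.

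The paper avoids this trap by never mixing $P_1$-segments into the bead. It fixes a split vertex $c\in\Sp(C_0)$, colours each exit vertex by which side of $c$ its attachment lies, and calls an endblock \emph{unbalanced} if it has three exits of the same colour. Unbalanced endblocks yield a $\theta(k,1)$-necklace directly. For balanced endblocks the colour constraint forces $d_2(B)\le 5$, and the proof then splits on $d_2(B)\in\{3,4,5\}$, applying Theorem~\ref{thm:1degree2} or Lemma~\ref{lem:2deg2-ver2} \emph{to $B$ itself} with two exit vertices as the designated pair. The resulting paths live entirely inside $B$, so the length difference survives unchanged when they are spliced into a cycle through fixed attachment points on $P_1$. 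Your dichotomy on the number of exits (via Theorem~\ref{thm:chord_adv}) never delivers the bound $d_2(B)\le 5$ that makes this work; that bound is precisely what the balanced/unbalanced colouring provides.
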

\begin{proof}
Let $H=G-\Theta^+$.
We may assume $N_{\Theta}(C^+)\subset V(P_1)$ for every component $C$ in~$\C$. 
Let $C_0$ be the first component in $\C$ and $c$ a vertex in $\Sp(C_0) - N_{\Theta^+}(C_0)$ (such a vertex exists since otherwise the two endvertices of $\Sp(C_0)$ form a 2-cut in $G$).
Note that $c$ is contained in the $\Theta$-span of every component in $\C$.
Let $L=uP_1c$ and $R=cP_1v$. 
We define $\Sp_L(B) = \Sp (B) \cap V(L)$ and $\Sp_R(B) = \Sp (B) \cap V(R)$ for every $\Theta$-isolated subgraph $B$ of $H$ with $N_{\Theta}(B^+)\subset V(P_1)$. If $\Sp_L (B)\neq \emptyset$, let $\ell_1(B), \ell_2(B)$ denote the vertices of $\Sp_L (B)$ which are closest to $u$ and $v$ on $P_1$, respectively. Note that $\ell_1(B) = \ell_2(B)$ if and only if $\Sp_L(B)$ consists of a single vertex. If $\Sp_R (B)\neq \emptyset$, we similarly define $r_1(B), r_2(B)$ as the vertices of $\Sp_R (B)$ which are closest to $u$ and $v$, respectively.
We colour the vertices of $N_H(\Theta^+)$ with colours 1 and 2 so that~$v$ is coloured 1 if and only if $v\in N_H(L)$ or $v$ has a neighbour in $N_{F(\Theta)}(L)$.
We say an endblock $B$ of $H$ is unbalanced if $B$ contains three vertices in the same colour. We say a component of $H$ is unbalanced if it contains an unbalanced endblock. Finally, we say a block or component is balanced if it is not unbalanced.\\

\textbf{Case 1:} $\C$ contains at least $k$ unbalanced components.\\
Let $\C' = (C_1,\ldots ,C_k)$ be a subsequence of $\C$ consisting of unbalanced components. For $i\in \{1,\ldots ,k\}$, let $B_i$ denote an unbalanced endblock of $C_i$.\\
Suppose $B_i$ contains three vertices coloured 1. If there exists $x\in B_i$ and $u\in F(\Theta)$ with $N(u) = \{ \ell_1 (B_i), \ell_2 (B_i), x\}$, then $\ell_1 (B_i)$ and $\ell_2 (B_i)$ have distance at most 2. Since~$\Theta$ is a short $\theta$-graph, this implies $|\Sp_L(B_i)| \leq 3$. Now there is at most one vertex in $N_{\Theta}(B_i^+) \setminus\{\ell_1(B_i), \ell_2(B_i)\}$ while there are at least two vertices in $B_i\setminus\{x\}$ coloured 1, a contradiction. Thus, there exists a $\Theta$-projection $\pi : N_H(\Theta^+) \rightarrow V(\Theta)$ such that $\ell_1 (B_i),\ell_2 (B_i)\in \pi (N_{B_i}(\Theta^+))$.
Now let $x_i,y_i,z_i \in N_{B_{i}}(\Theta^+)$ be three vertices in colour 1 with $\pi (x_i) = \ell_1 (B_i)$ and $\pi (y_i) = \ell_2 (B_i)$. Note that we have $\pi (z_i) \in \Sp_L(B_i)$. Let $w_i$ denote the neighbour of $z_i$ in $\Theta^+$. Let $Q_i$ be an $\ell_1(B_i)-\ell_2(B_i)$ path which contains~$z_i$ and whose interior vertices lie in $B_i^+$ (such a path exists since $B_i$ is 2-connected). It is easy to see that there exists an $\ell_1(B_i)-\ell_2(B_i)$ path $R_i$ in $\Theta^+$ which contains $w_i$, see Figure~\ref{fig:lem_5_15_case_1}.\\
If $B_i$ contains three vertices in colour 2, then we similarly choose $x_i,y_i,z_i \in N_{B_{i}}(\Theta^+)$ such that $\pi (x_i) = r_1 (B_i)$ and $\pi (y_i) = r_2 (B_i)$. We define
$Q_i$ as an $r_1(B_i)-r_2(B_i)$ path which contains $z_i$ and whose interior vertices lie in $B_i^+$. We define $R_i$ as an $r_1(B_i)-r_2(B_i)$ path in $\Theta^+$ which contains $w_i$, the neighbour of $z_i$ in $\Theta^+$.\\ 
For each $i\in \{1,\ldots ,k\}$, let $\Theta_i$ be the union of $Q_i$, $R_i$, and the edge $w_iz_i$. Note that $\Theta_i$ is a $\theta$-graph and $V(\Theta_i) \subset V(C_i^+)\cup \Sp_L(C_i) \cup \Sp_R(C_i)$. Since $\Sp_L(C)\cap \Sp_L(C') = \emptyset$ whenever $C$ and $C'$ are two different components of $\C$, we have that $\Theta_1,\ldots ,\Theta_k$ are pairwise disjoint. Now the union of $P_1\cup P_2$ and all subgraphs $\Theta_i$ with $i\in\{1,\ldots ,k\}$ contains a $\theta (k,1)$-necklace.\\

\begin{figure}[t]
    \centering
        \includegraphics[width=0.55\textwidth]{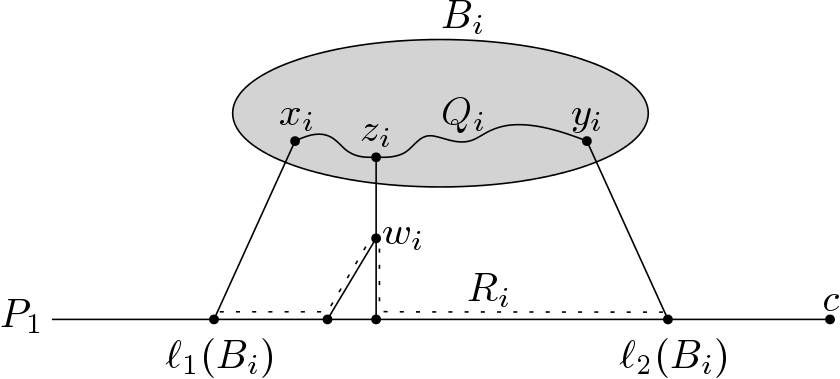}
    \caption{Proof of Lemma~\ref{lem:theta-chain} Case 1}
    \label{fig:lem_5_15_case_1}
\end{figure}

\textbf{Case 2:} $\C$ contains at least $4k$ balanced components.\\
Let $\C'$ be a subsequence of $\C$ of length $4k$ containing no unbalanced components. For an endblock $B$ of $H$, let $d_2(B)$ denote the number of vertices of degree 2 in $B$. By 3-connectivity of $G$ we have $d_2(B)\geq 3$ for every 2-connected endblock $B$. Note that the number of coloured vertices in a 2-connected endblock $B$ is $d_2(B)$ if $B$ is a component in~$H$. If $B$ contains a cutvertex in $H$, then $d_2(B)-1$ vertices in $B$ are coloured. Thus, if $d_2(B)\geq 6$ then $B$ is unbalanced. In particular, we have $d_2(B)\leq 5$ if $B$ is an endblock of a component in $\C'$. We define 
$$d_2(C) = \min \{d_2(B):\mbox{$B$ is an endblock of $C$}\}$$ 
for every component $C$ in $\C'$. For $i\in\{3,4,5\}$, we define $\C_i$ as the subsequence of $\C'$ containing all the components $C$ with $d_2(C)=i$. Note that $|\C_3|+|\C_4|+|\C_5|=|\C'|= 4k$.\\

\textbf{Case 2.1:} $|\C_3|\geq k$.\\
We may assume $|\C_3| = k$ and $\C_3 = (C_1,\ldots ,C_k)$. Let $B_i$ be an endblock of $C_i$ with $d_2(B_i)=3$ for every $i\in \{1,\ldots ,k\}$, and $\B = \{B_1,\ldots ,B_k\}$. Let $\pi : N_H(\Theta^+) \rightarrow V(\Theta)$ be a $\Theta$-projection. For $i\in\{1,\ldots ,k\}$, we define a path $Q_i$ in the following way. If $B_i$ has two vertices of the same colour, say $x_i$ and $y_i$, then let $Q_i$ be an $\pi(x_i)-\pi(y_i)$ path with interior vertices in $B_i^+$.\\ 
If $B_i$ contains no two vertices of the same colour, then it contains a vertex in each colour and a cutvertex in $C_i$. In this case, let $x_i$ be a coloured vertex in $C_i$ which is not contained in $B_i$. Let $y_i$ be the coloured vertex of $B_i$ which has the same colour as $x_i$. Now let $Q_i$ be an $\pi (x_i)-\pi (y_i)$ path with interior vertices in $C_i^+$.\\
Since $x_i$ and $y_i$ have the same colour, the vertices of $\pi(x_i)P_1\pi(y_i)$ are contained in $\Sp_L(C_i)$ or $\Sp_R(C_i)$. We have $\Sp_L(C_i) \cap \Sp_L(C_j) = \emptyset$ and $\Sp_R(C_i) \cap \Sp_R(C_j) = \emptyset$ for $i, j\in \{1, \ldots , k\}$ with $i\neq j$ since $\C_3$ is a special $\Theta$-chain.
Thus, the $k$ subpaths of the form $\pi(x_i)P_1\pi(y_i)$ with $i\in\{1,\ldots ,k\}$ are pairwise disjoint. Let $C$ be the cycle we obtain from $P_1\cup P_2$ by replacing the path $\pi(x_i)P_1\pi(y_i)$ by the path $Q_i$ for each $i\in\{1,\ldots ,k\}$. Let $N$ be the union of $C$ and all the blocks in $\B$. Clearly $N$ is a $\B$-necklace. Since each block in~$\B$ has only three vertices of degree 2, the necklace $N$ is $k$-wiggly by Theorem~\ref{thm:1degree2}.\\

\textbf{Case 2.2:} $|\C_4|\geq 2k$.\\
We may assume $|\C_4| = 2k$ and $\C_4 = (C_1,\ldots ,C_{2k})$. Let $B_i$ be an endblock of $C_i$ with $d_2(B_i)=4$ for every $i\in \{1,\ldots ,2k\}$, and $\B = \{B_{k+1},\ldots ,B_{2k}\}$. Let $\pi : N_H(\Theta^+) \rightarrow V(\Theta)$ be a $\Theta$-projection. Each $B_i$ contains at least three coloured vertices. Thus, $B_i$ contains two vertices $x_{i,1}$ and $x_{i,2}$ of the same colour and a third vertex $y_i$ of a different colour. Let~$z_i$ be the fourth vertex of degree 2 in $B_i$. Either $z_i$ is a cutvertex in $C_i$ or it has the same colour as $y_i$.\\ 
Let $S_i = \ell_2(B_i)P_1r_1(B_i)$ for $i\in \{1,\ldots ,2k\}$. Note that $N_{\Theta}(C_j^+) \subset S_i$ for $j\in\{1,\ldots ,i-1\}$ and $|N_{\Theta}(C_j^+)|\geq 4$. This implies $|E(S_i)|\geq 4(i-1)+1$. In particular, the distance between $\pi (x_{i,1})$ and $\pi (y_i)$ on $P_1$ is at least $4i-3$. Let $S_i'$ be a shortest $x_{i,1}-y_i$ path in $B_i$. Since~$\Theta$ is a short $\theta$-graph, we have $|E(S_i')|+4 \geq 4i-3$. Thus for $i\in \{k+1,\ldots 2k\}$ we have $|E(S_i')|\geq 4i-7\geq 4k-3 \geq k$. For the same reason, the distance between $x_{i,2}$ and $y_i$ in $B_i$ is at least $k$ for $i\in\{k+1,\ldots ,2k\}$.\\
Now we can apply Lemma~\ref{lem:2deg2-ver2} to each block in $\B$. Suppose for some $i\in\{k+1,\ldots ,2k\}$ there exist two disjoint $\{x_{i,2},x_{i,1}\}-\{y_i,z_i\}$ paths $Q_{i,1}$, $Q_{i,2}$ in $B_i$ with $|E(Q_{i,1},Q_{i,2})| \geq k$. It is easy to see that either $Q_{i,1}$ can be extended to a cycle which is $k$-close to $Q_{i,2}$, or $Q_{i,2}$ can be extended to a cycle which is $k$-close to $Q_{i,1}$, see Figure~\ref{fig:lem_5_15_case_2_2}. Thus, we can assume by Lemma~\ref{lem:2deg2-ver2} that for each $i\in\{k+1,\ldots ,2k\}$ there exist two $x_{i,1}-x_{i,2}$ paths $P_{i,1}$ and $P_{i,2}$ such that $|E(P_{i,1})|-|E(P_{i,2})|\in\{1,2\}$. As in Case 2.1, we can now find a cycle $C$ which contains all the paths $P_{i,1}$ for $i\in\{k+1,\ldots ,2k\}$. The union of $C$ with the blocks in $\B$  forms a $k$-wiggly $\B$-necklace.\\

\begin{figure}[]
    \centering
        \includegraphics[scale=.65]{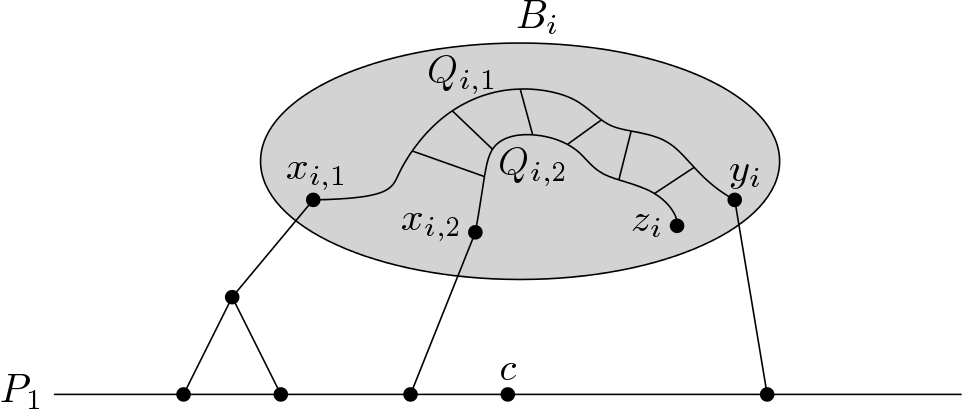}
    \caption{Proof of Lemma~\ref{lem:theta-chain} Case 2.2}
    \label{fig:lem_5_15_case_2_2}
\end{figure}

\textbf{Case 2.3:} $|\C_5|\geq k$.\\
We may assume $|\C_5| = k$ and $\C_5 = (C_1,\ldots ,C_k)$. If $B$ is an endblock of some component in~$\C_5$, then we have $5\leq d_2(B)$ by definition of $\C_5$ and $d_2(B)<6$ since $B$ is balanced. Thus, we have $d_2(B) = 5$ for every endblock $B$ of a component in $\C_5$. There are two vertices of each colour in $B$ and one cutvertex. In particular, the vertices $\ell_1(B),\ell_2(B),r_1(B),r_2(B)$ exist and are pairwise distinct. 
Let $\pi : N_H(\Theta^+) \rightarrow V(\Theta)$ be a $\Theta$-projection such that $r_1 (B)\in \pi (N_B(\Theta^+))$ for every endblock $B$ of a component in $\C_5$. It is possible that there exists a vertex in $B^+$ which is adjacent to both $r_1(B)$ and $r_2(B)$, in which case $r_1(B)$ and $r_2(B)$ have distance 2 on $\Theta$ and their common neighbour on $\Theta$, say $s(B)$, is contained in $\pi (N_B(\Theta^+))$. In this situation we choose $z(B)\in N_{B}(\Theta^+)$ such that $\pi (z(B)) = s(B)$. If $r_1(B)$ and $r_2(B)$ have no common neighbour in $B^+$, then we choose $z(B)\in N_{B}(\Theta^+)$ such that $\pi (z(B)) = r_1(B)$.\\
Let $w(B)$ be the neighbour of $z(B)$ in $\Theta^+$. Let $B_i$ and $B_i'$ be two different endblocks of~$C_i$ for $i\in\{1,\ldots ,k\}$. We may assume $r_1(B_i)\in cP_1r_1(B_i')$. Let $Q_i$ be an $r_1(B_i)-r_2(B_i')$ path whose interior vertices lie in $C_i^+$ and which contains $z(B_i')$ (such a path exists since $B_i'$ is 2-connected).
It is easy to see that there exists an $r_1(B_i)-r_2(B_i')$ path $R_i$ in $\Theta^+$ which contains $w(B_i')$. For each $i\in \{1,\ldots ,k\}$, let $\Theta_i$ be the union of $Q_i$, $R_i$, and the edge $w(B_i')z(B_i')$. As in Case 1, the $\theta$-graphs $\Theta_1$,\ldots,$\Theta_k$ are pairwise disjoint. Now the union of $P_1\cup P_2$ and $\Theta_1,\ldots,\Theta_k$ contains a $\theta (k,1)$-necklace.
\end{proof}

We now show that a graph with many $\Theta$-isolated 2-connected endblocks is $k$-good. We distinguish essentially three cases: there are many endblocks with pairwise disjoint spans, or many endblocks which are pairwise crossing, or a sequence of endblocks $B_1$,\ldots ,$B_\ell$ such that $B_i<_{\Theta}B_j$ for $i<j$. In the last case we use this sequence to construct a $\Theta$-chain which contains a special $\Theta$-chain as a subsequence.
   
\begin{theorem}\label{thm:theta-isolated}
Let $\Theta = (P_1,P_2,P_3)$ be a shortest $u,v$-$\theta$-graph in a cubic 3-connected graph $G$ and $H=G-\Theta^+$. If the number of $\Theta$-isolated 2-connected endblocks in $H$ is at least $5700k^6$, then $G$ is $k$-good.
\end{theorem}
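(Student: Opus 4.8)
We may assume throughout that $G$ is not $k$-good, since otherwise there is nothing to prove. By Theorem~\ref{thm:leaves}, $G-\Theta^+$ then has fewer than $8k$ vertices of degree at most $1$; by Lemma~\ref{lem:easy_specialcases}, $G-\Theta$ has fewer than $\frac{3}{2}k$ $\Theta$-friendly vertices and fewer than $3k$ isolated vertices and fewer than $3k$ isolated edges; and by Theorem~\ref{thm:theta-connecting} there are fewer than $21k^2+\frac{3}{2}k$ $\Theta$-connecting $2$-connected endblocks in $H:=G-\Theta^+$. Discarding the correspondingly bounded exceptional collections of endblocks and applying the pigeonhole principle over the three legs, we are left with a set $\mathcal{E}$ of $\Theta$-isolated $2$-connected endblocks of $H$, of size a large polynomial in $k$ (roughly $\frac{1}{3}\cdot 5700k^6$ minus lower order terms), with $N_\Theta(B^+)\subseteq V(P_1)$ for every $B\in\mathcal{E}$. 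As in the proof of Lemma~\ref{lem:theta-chain}, $3$-connectivity of $G$ forces $|\Sp(B)|\ge 2$ for each $B\in\mathcal{E}$, so every $\Theta$-span $\Sp(B)$ is a genuine subpath of $P_1$; we view the $\Sp(B)$ as a family of intervals on the line $P_1$.

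The next step is a purely combinatorial extraction from this family. Using standard facts about intervals together with repeated applications of the Erd\H{o}s--Szekeres theorem (Theorem~\ref{thm:erdos-szekeres}) and a Dilworth-type dichotomy, the fact that $|\mathcal{E}|$ is a sufficiently large polynomial in $k$ yields a subfamily of $\mathcal{E}$ in one of three configurations: \textbf{(A)} at least $5k$ endblocks with pairwise disjoint spans, ordered along $P_1$; \textbf{(B)} at least $5k$ endblocks that are pairwise crossing, in which case by Helly's property of intervals their spans share a common vertex $c\in V(P_1)$; or \textbf{(C)} a sequence $B_1,\dots,B_\ell$ with $\ell\ge 5k$ and $B_i<_{\Theta}B_j$ for all $i<j$. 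Getting a genuine $<_{\Theta}$-chain in (C), as opposed to merely a chain nested in the containment order on spans, requires an extra Erd\H{o}s--Szekeres/counting step, since a blob may attach to $P_1$ in the interior of a smaller blob's span; arranging that $5700k^6$ is large enough to push all of these extraction steps through, including this extra loss, is a routine but careful bookkeeping matter.

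In configuration (C) the plan is to convert the $<_{\Theta}$-chain of endblocks into a \emph{special} $\Theta$-chain of components and then apply Lemma~\ref{lem:theta-chain}: discard the $O(k)$ components containing a non-$2$-connected endblock (there are few, by Theorem~\ref{thm:leaves} and Lemma~\ref{lem:easy_specialcases}), deal separately with the degenerate possibility that one component contains several of the $B_i$ (which, exploiting that $B_i<_{\Theta}B_j$ forces the attachment points of $B_j$ to avoid $\Sp(B_i)$, again produces one of the target substructures), and check that the surviving components inherit the relations $<_{\Theta}$. In configuration (A) the pairwise disjoint spans let us dangle the blocks $B_1,\dots,B_t$ off $P_1$ in order and close up, via the ends of $P_1$ and the leg $P_2$, to a $\B$-necklace; depending on the number of degree-$2$ vertices of the blocks we then argue exactly as in Cases~1 and~2 of the proof of Lemma~\ref{lem:theta-chain}: if enough $B_i$ have at most three degree-$2$ vertices the necklace is $k$-wiggly by Theorem~\ref{thm:1degree2}; if enough have four, Lemma~\ref{lem:2deg2-ver2} yields either a $k$-wiggly necklace or a cycle and a path which are $k$-close; if enough have five or more, each such block attaches to $P_1$ at three or more points and, using the cycle formed by $\Sp(B_i)$ together with a path through $B_i^+$ and a short chord to a third attachment point, we obtain a $\theta(k,1)$- or $\theta(k,2)$-necklace, while a single block with enormously many attachments produces a $k$-close pair directly via Theorem~\ref{thm:chord_adv}. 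Configuration (B) is handled by a more involved argument exploiting the winding structure of crossing blobs: the endblocks with the extreme attachment points bound a cycle $Z$ to which the remaining ones attach as pairwise non-crossing chord-paths, and organizing these chord-paths (in alternate pairs, or all at once when the blocks have many attachments) yields a $\theta$-necklace or a $k$-close pair.

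The main obstacle I expect is precisely the passage from the endblock-level combinatorics to the necklace- and chain-level conclusions. Turning configuration (C) into a special $\Theta$-chain is the delicate point: endblocks lying in a common component, and components whose non-extremal attachments to $P_1$ fall inside another component's span, must each be ruled out or rerouted into one of the other substructures, and this is where the bulk of the polynomial slack in $5700k^6$ is consumed. A secondary difficulty is treating blocks with few versus many degree-$2$ vertices uniformly: blocks with at most four degree-$2$ vertices feed the wiggly-necklace machinery of Section~4, whereas blocks with many degree-$2$ vertices (equivalently, many attachments to $\Theta$) feed the $k$-close and $\theta$-necklace machinery of Section~3, and the case split in each of (A), (B), (C) must be set up so that at least one of these two regimes supplies enough building blocks.
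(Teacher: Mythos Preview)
Your three-configuration decomposition (A)/(B)/(C) matches the paper's overall strategy, and your treatment of the ``good'' cases --- disjoint spans and a genuine special $\Theta$-chain --- is essentially correct. But the proposal has a real gap precisely where you yourself flag it: configurations (B) and the degenerate sub-cases of (C) are not actually resolved, only described as ``more involved'' or ``must be rerouted into one of the other substructures.'' The paper does not reroute them; it handles them directly, and the tool you are missing is Lemma~\ref{lem:chord}.

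Concretely, in the paper's analogue of your (B) (many mutually crossing blocks, Case~1.1) and in the bad sub-cases of the nested situation (Cases~2.1 and~2.2, where a component's attachments intrude into a smaller block's span or escape a larger one), the argument is always the same: build an auxiliary multigraph on $P_1$ (or on a subpath of it) by adding one virtual chord $\pi(x_B)\pi(y_B)$ for each block $B$, apply Lemma~\ref{lem:chord} to find a path in this auxiliary graph using many of the virtual chords, and then replace each virtual chord by a genuine path through $B^+$ chosen to pass through a third attachment vertex $z_B$. The resulting path is $k$-close to a cycle built from the other half of $P_1\cup P_2$ together with one detour through a designated block. This construction is what absorbs the quadratic loss (hence the $6k^2$ thresholds inside the proof) and is the reason the bound is $k^6$ rather than, say, $k^3$. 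Your sketch invokes Theorem~\ref{thm:chord_adv} only for a single huge block; the multi-block version via Lemma~\ref{lem:chord} on auxiliary chord-graphs is the missing engine.

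A secondary point: your treatment of (A) is more elaborate than needed. With pairwise disjoint spans one does not need the balanced/unbalanced analysis from Lemma~\ref{lem:theta-chain} or Lemma~\ref{lem:2deg2-ver2}; it suffices to split the $2k$ blocks into those with exactly three degree-$2$ vertices (yielding a $k$-wiggly necklace via Theorem~\ref{thm:1degree2}) and those with at least four (yielding a $\theta(k,1)$-necklace directly, since a third attachment gives the short middle leg).
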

\begin{proof}
We may assume $k\geq 3$ since $\Theta$ contains a path and a cycle which are 2-close. Let $n= 1900k^6$. We may assume that there exist $n$ pairwise disjoint $\Theta$-isolated 2-connected endblocks $B_1,\ldots ,B_n$ such that $N_{\Theta}(B_i^+)\subset V(P_1)$ for $i\in\{1,,\ldots ,n\}$. For two vertices $x,y\in V(P_1)$ we write $x\leq y$ if $x\in uP_1y$. Note that this defines a total order on $V(P_1)$. We write $x<y$ if $x\leq y$ and $x\neq y$.
For an endblock $B$ of $H$, let $\ell (B)$ and $r(B)$ denote the vertices of $\Sp (B)$ which are closest to $u$ and $v$ on $P_1$, respectively. Let $\pi : N_H(\Theta^+) \rightarrow V(\Theta)$ be a $\Theta$-projection such that $\{\ell (B),r(B)\}\subset \pi (N_B(\Theta^+))$ for each 2-connected endblock $B$ of $H$ for which $\ell (B)$ and $r(B)$ have no common neighbour in $F(\Theta)$. 
We will only use $\pi$ in this proof when we consider blocks $B$ that contain at least four vertices of degree 2. In this case it is easy to see that $\ell (B)$ and $r(B)$ have no common neighbour in $F(\Theta)$ since otherwise $\Sp (B)$ could contain at most three vertices.
Let $\ell_i = \ell (B_i)$ and $r_i = r(B_i)$. We may assume $\ell_i < \ell_{i+1}$ for $i\in\{1,\ldots ,n-1\}$. Since $n> 1896k^6 = 12k^3\cdot 158k^3$, by Theorem~\ref{thm:erdos-szekeres} the sequence $R=(r_1 ,\ldots ,r_n)$ has a strictly decreasing subsequence of length $158k^3$ or a strictly increasing subsequence of length $12k^3$.\\

\textbf{Case 1:} $R$ has a strictly increasing subsequence of length $12k^3$.\\
Let $B_1',\ldots ,B_{12k^3}'$ be the blocks corresponding to the increasing subsequence of $R$. Let $\ell_i' = \ell (B_i')$ and $r_i'= r(B_i')$. Thus we have $\ell_i' < \ell_{i+1}'$ and $r_i'<r_{i+1}'$ for $i\in\{1,\ldots ,12k^3-1\}$. We distinguish two cases.\\

\textbf{Case 1.1:} $\Sp(B_i') \cap \Sp (B_{i+6k^2}')\neq \emptyset$ for some $i\in\{1,\ldots ,12k^3-6k^2\}$.\\
Let $\B = \{B_i',\ldots ,B_{i+6k^2}'\}$. For $j\in \{i+1,\ldots ,i+6k^2-1\}$ we have $\ell_j' < \ell_{i+6k^2}' < r_i' < r_j'$, so $r_i'\in \Sp (B_j')$. In particular, $r_i'$ is contained in $\Sp (B)$ for every $B\in \B$. Let $G'$ be the graph we get by adding the edges $\ell (B)r(B)$ for every $B\in \B$ to $P_1\cup P_2$. Note that $G'$ is the cycle $P_1\cup P_2$ together with $6k^2+1$ chords which are pairwise crossing. It is easy to see that $G'$ has a cycle $C'$ which contains all edges of the form $\ell (B)r(B)$ apart from possibly one. Let $C$ be the cycle we get by replacing each edge $\ell (B)r(B)$ of $C'$ by an $\ell(B)-r(B)$ path with interior vertices in $B^+$. Let $\B_3\subseteq \B$ be the subset of blocks which contain precisely three vertices of degree 2. If $|\B_3|\geq k+1$, then by Theorem~\ref{thm:1degree2} the $\B_3$-necklace formed by the union of $C$ and the blocks in $\B_3$ is $k$-wiggly. Thus, we may assume $|\B_3|\leq k$.\\
Let $\B' = \B\setminus \B_3$. Note that $|\B'|\geq 6k^2-k+1$.
We have $|N_B(\Theta^+)|\geq 3$ for each $B\in \B'$ so there exist $x_B,y_B,z_B \in N_B(\Theta^+)$ such that $\pi(x_B) = \ell (B)$, $\pi (y_B) = r(B)$, and $\pi (z_B)\notin \{\ell (B), r(B)\}$. Let $B_0$ be the block in $\B'$ for which $\ell (B_0)$ is closest to $u$ on $P_1$. The vertices $\ell (B_0)$ and $r(B_0)$ split the cycle $P_1\cup P_2$ into two paths $Q_1$ and $Q_2$, see Figure~\ref{fig:thm_5_16_case_1_1_two}. We may assume that $Q_1$ contains $\ell (B)$ and $Q_2$ contains $r(B)$ for every $B\in\B'$. One of these two paths, say $Q_1$, contains at least $\frac{1}{2}(|\B'|-1)$ vertices of the form $\pi (z_B)$ with $B\in \B'\setminus\{B_0\}$. Let $Q'$ be the graph we get from $Q_1$ by adding the edges $\pi (x_B)\pi (z_B)$ for every $B\in \B'\setminus \{B_0\}$ with $\pi (z_B)\in V(Q_1)$. Note that 
$$\frac{1}{2}(|\B'|-1) \geq 3k^2-\frac{1}{2}k \geq \frac{8}{3}k(k-1)+1\,,$$ 
so by Lemma~\ref{lem:chord} there exists a path $P'$ in $Q'$ containing at least $k$ edges of the form $\pi (x_B)\pi (z_B)$. For each $B\in \B'$, let $P_B$ be a $\pi (x_B)-\pi (z_B)$ path which contains $y_B$ and whose interior vertices lie in $B^+$. By replacing each edge $\pi (x_B)\pi (z_B)$ in $P'$ by $P_B$, we obtain a path $P$ which is $k$-close to $Q_2$. Let $P_0$ be an $\ell (B_0)-r(B_0)$ path with interior vertices in $B_0^+$. Now $P$ and the cycle $P_0\cup Q_2$ are $k$-close.\\

\begin{figure}[]
        \centering
        \includegraphics[width=.7\textwidth]{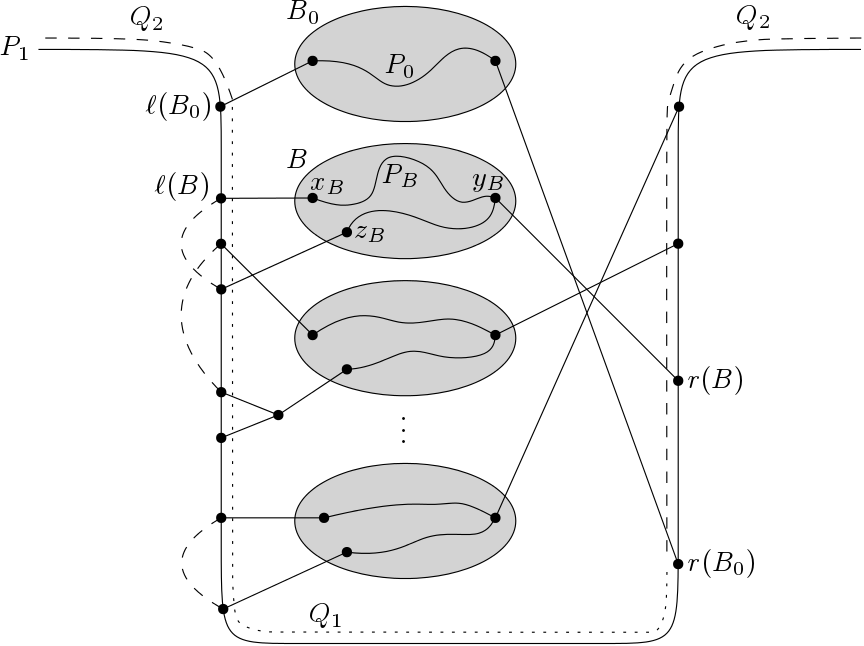}
        
    \caption{Proof of Theorem~\ref{thm:theta-isolated} Case 1.1}
    \label{fig:thm_5_16_case_1_1_two}
\end{figure}

\textbf{Case 1.2:} $\Sp(B_i') \cap \Sp (B_{i+6k^2}') = \emptyset$ for all $i\in\{1,\ldots ,12k^3-6k^2\}$.\\
For $i\in \{0,\ldots ,2k-1\}$, let $B_i'' = B_{1+6ik^2}'$ and $\B = \{B_0'',\ldots ,B_{2k-1}''\}$. Note that the $\Theta$-spans of the blocks in $\B$ are pairwise disjoint. Clearly $G$ has a cycle $C$ which goes through every block in $\B$. Let $\B_3\subseteq \B$ be the subset of blocks which contain precisely three vertices of degree 2. If $|\B_3|\geq k$, then by Theorem~\ref{thm:1degree2} the $\B_3$-necklace formed by the union of $C$ and the blocks in $\B_3$ is $k$-wiggly. Thus, we may assume $|\B_3|<k$. Let $\B' = \B\setminus \B_3$. Note that $|\B'| \geq k$. Let $x_B,y_B,z_B \in N_B(\Theta^+)$ such that $\pi(x_B) = \ell (B)$, $\pi (y_B) = r(B)$, and $\pi (z_B)\notin \{\ell (B), r(B)\}$ for each $B\in \B'$. Let $P_B$ be a $\pi (x_B)-\pi (y_B)$ path which contains $z_B$ and whose interior vertices lie in $B^+$, and let $w_B$ be the neighbour of $z_B$ in $\Theta^+$. It is easy to see that there exists a $\pi (x_B)-\pi(y_B)$ path $Q_B$ in $\Theta^+$ which contains $w_B$. Finally let $\Theta_B$ be the union of $P_B$, $Q_B$, and the edge $w_Bz_B$. Now the union of $P_1\cup P_2$ and all subgraphs~$\Theta_B$ with $B\in \B'$ contains a $\theta (k,1)$-necklace.\\

\textbf{Case 2:} $R$ has a strictly decreasing subsequence of length $158k^3$.\\
Note that if $\ell_i <\ell_j$ and $r_j<r_i$, then $\Sp (B_j)\subset \Sp (B_i)$. Thus, there exist $158k^3$ pairwise disjoint $\Theta$-isolated 2-connected endblocks $B_1',\ldots ,B_{158k^3}'$ such that $\Sp (B_{i}') \subset \Sp (B_{i+1}')$ for $i\in \{1,\ldots ,158k^3-1\}$. Let $C_i$ denote the component of $H$ containing $B_i'$ and let $\C = (C_1,\ldots ,C_{158k^3})$. The components in $\C$ are not necessarily distinct. Note that $158k^3\geq 156k^3+6k^2$ for $k\geq 3$, so the following expressions are well-defined. We distinguish three cases. \\

\textbf{Case 2.1:} $N_{\Theta}(C_{i+6k^2}^+)\cap \Sp(B_{i}') \neq \emptyset$ for some $i\in \{1,\ldots ,156k^3\}$.\\
Let $x\in N_{\Theta}(C_{i+6k^2}^+)\cap \Sp(B_{i}')$ and let $B_x$ be the block of $C_{i+6k^2}$ with $x\in N_{\Theta}(B_x^+)$. Let $\B = \{B_i',\ldots ,B_{i+6k^2-1}'\}\setminus \{B_x\}$, $\ell = \ell (B_{i+6k^2}')$, and $r= r(B_{i+6k^2}')$. Let $P_r$ be an $r-x$ path with interior vertices in $C_{i+6k^2}^+$. Note that $P_r$ is disjoint from $B$ for every $B\in \B$. Let $\B_3\subseteq \B$ be the set of blocks which contain three vertices of degree 2. It is easy to see that there exists a cycle $C$ in $G$ which contains $P_r$ and goes through every block in $\B$. If $|\B_3| \geq k$, then the union of $C$ and the blocks in $\B_3$ is a $k$-wiggly $\B_3$-necklace. So we may assume $|\B_3|<k$.\\
Let $\B' = \B\setminus \B_3$. Note that $x\in \ell P_1r$, so we can define $Q_1 = \ell P_1x$ and $Q_2 = xP_1r$. For every $B\in \B'$ there exists a vertex $x_B\in B$ such that $\pi (x_B) \in (Q_1\cup Q_2)\setminus \{\ell (B),r(B)\}$. We may assume that at least $\frac{1}{2}|\B'|$ of the vertices $\pi(x_B)$ with $B\in \B'$ are contained in $Q_1$. For every $B\in \B'$, let $z_B\in B$ such that $\pi (z_B) = r(B)$ and let $P_B$ be a $\pi(x_B)-\ell (B)$ path with interior vertices in $B^+$ and containing $z_B$. Let $Q'$ be the graph we get from $Q_1$ by adding the edges $\pi(x_B)\ell (B)$ for every $B\in \B'$ with $\pi (x_B)\in V(Q_1)$, see Figure~\ref{fig:thm_5_16_case_2_1}. The number of edges added is at least $\frac{1}{2}|\B'| \geq \frac{1}{2}(6k^2-k) \geq \frac{8}{3}k(k-1)+1$, so there exists a path $P'$ in $Q'$ containing at least $k$ of these edges by Lemma~\ref{lem:chord}. As in Case 1.1, we can modify $P'$ by replacing the edges $\pi(x_B)\ell (B)$ by the paths $P_B$ to obtain a path $P$ in $G$. Now $P$ and the cycle $Q_2\cup P_r$ are $k$-close.\\

\begin{figure}[]
        \centering
        \includegraphics[scale=.9]{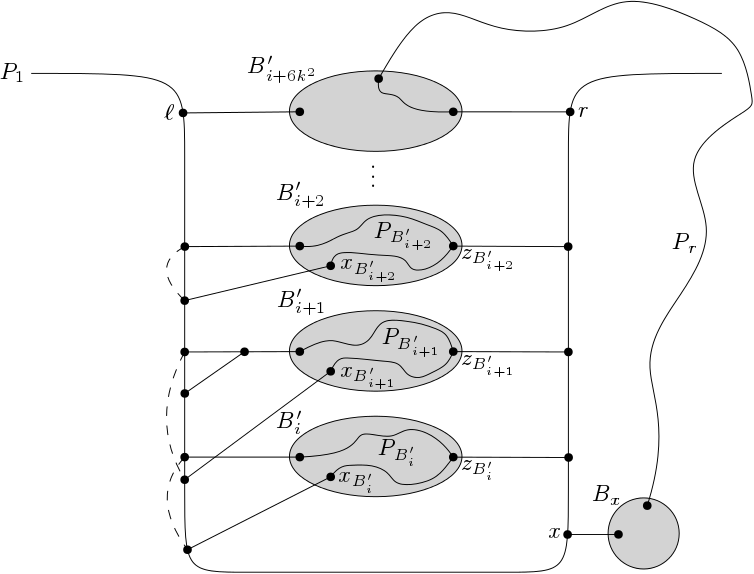}
    \caption{Proof of Theorem~\ref{thm:theta-isolated} Case 2.1}
    \label{fig:thm_5_16_case_2_1}
\end{figure}

\textbf{Case 2.2:} $N_{\Theta}(C_i^+)\setminus \Sp(B_{i+6k^2}') \neq \emptyset$ for some $i\in \{1,\ldots ,156k^3\}$.\\
Let $x\in N_{\Theta}(C_i^+)\setminus \Sp(B_{i+6k^2}')$ and let $B_x$ be the block of $C_{i}$ with $x\in N_{\Theta}(B_x^+)$. We may assume that $x$ is contained in the cycle $P_1\cup P_2$. Let $P_r$ be an $x-r(B_i')$-path with interior vertices in $C_i^+$. 
Let $\B = \{B_{i+1}',\ldots ,B_{i+6k^2}'\}\setminus \{B_x\}$ and $\B_3\subseteq \B$ the set of blocks with three vertices of degree 2. The vertices $x$ and $r(B_i')$ split the cycle $P_1\cup P_2$ into two paths $Q_1$ and~$Q_2$. We may assume that $Q_1$ contains $\ell (B)$ and $Q_2$ contains $r(B)$ for every $B\in \B$. 
Now we proceed as in the previous case.\\ 
If $|\B_3|\geq k$, then we can find a $k$-wiggly $\B_3$-necklace. So we may assume $|\B_3|<k$. Let $\B' = \B\setminus \B_3$. For every $B\in \B'$ there exists a vertex $x_B\in B$ such that $\pi (x_B) \in (Q_1\cup Q_2)\setminus \{\ell (B),r(B)\}$. We may assume that at least $\frac{1}{2}|\B'|\geq \frac{8}{3}k(k-1)+1$ of the vertices $\pi(x_B)$ with $B\in \B'$ are contained in $Q_1$. For every $B\in \B'$, let $z_B\in B$ such that $\pi (z_B) = r(B)$. As before, there exists a path $P$ which is disjoint from $Q_2\cup P_r$ and contains at least $k$ vertices $z_B$. Now $P$ and the cycle $Q_2\cup P_r$ are $k$-close.\\

\textbf{Case 2.3:} $N_{\Theta}(C_i^+)\subset \Sp(B_{i+6k^2}')$ and  $N_{\Theta}(C_{i+6k^2}^+)\cap \Sp(B_{i}') = \emptyset$ for all $i\in \{1,\ldots ,156k^3\}$.\\
Note that $N_{\Theta}(C_i^+)\subset \Sp(B_{i+6k^2}')$ implies that every component in $\C$ is $\Theta$-isolated. Let $C_i' = C_{12ik^2}$ for $i\in\{1,\ldots ,13k\}$, and $\C' = (C_1',\ldots ,C_{13k}')$. We show that $\C'$ is a $\Theta$-chain. For $i,j\in \{1,\ldots ,156k^3\}$ with $j\geq i+12k^2$, we have $N_{\Theta}(C_i^+) \subset \Sp (B_{i+6k^2}') \subseteq \Sp (B_{j-6k^2}')$ and $N_{\Theta}(C_j^+) \cap \Sp (B_{j-6k^2}')=\emptyset$, which implies $\Sp(C_i)\cap N_{\Theta}(C_j^+) = \emptyset$. Thus, $C_i$ and $C_j$ are not crossing, which implies that the components in $\C'$ are pairwise not crossing. Moreover, $\Sp (C_i) \subseteq \Sp (B_{i+6k^2}')\subset \Sp (B_j') \subseteq \Sp (C_j)$ for $i,j\in \{1,\ldots ,156k^3\}$ with $j\geq i+12k^2$. Thus, $C_i'\leq_{\Theta} C_{i+1}'$ for $i\in\{1,\ldots ,13k\}$ and $\C'$ is a $\Theta$-chain. By Theorem~\ref{thm:leaves}, we may assume that there are less than $8k$ components containing an endblock which is not 2-connected. Since~$\C'$ has length $13k$, it contains a subsequence of length $5k$ which is a special $\Theta$-chain. Now~$G$ is $k$-good by Lemma~\ref{lem:theta-chain}.
\end{proof}

\subsection{Few 2-connected endblocks in $G-\Theta^+$}

Combining the results of Section 5.2 and Section 5.3, the only remaining case in the proof of Theorem~\ref{thm:main} is when $G-\Theta^+$ does not contain many 2-connected endblocks. By Theorem~\ref{thm:leaves} we can assume that there are not many endblocks in $G-\Theta^+$. In this case we can find a path and a cycle which are $k$-close.

\begin{theorem}\label{thm:main-diam}
Let $G$ be a cubic 3-connected graph and $k$ a natural number. If the diameter of $G$ is at least $10^9k^{13}2^{9k^2}$, then $G$ is $k$-good.
\end{theorem}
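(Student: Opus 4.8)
The plan is to find a long $\theta$-graph, use the results of Sections~5.1--5.3 to control its surroundings, and then extract from $G-\Theta^+$ a path that is $k$-close to a cycle formed by two legs of the $\theta$-graph.

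Put $D=10^{9}k^{13}2^{9k^2}$, pick $u,v\in V(G)$ at distance at least $D$, and let $\Theta=(P_1,P_2,P_3)$ be a shortest $u,v$-$\theta$-graph, which exists since $G$ is $3$-connected; each leg is a $u$--$v$ path, so $|E(P_i)|\geq D$. Assume $G$ is not $k$-good. Then Lemma~\ref{lem:easy_specialcases} lets us assume that $\Theta$ has fewer than $\frac32 k$ edges with both ends in $V(\Theta)$, fewer than $\frac32 k$ $\Theta$-friendly vertices in $G-\Theta$, and fewer than $\frac32 k$ edges lying on a triangle; Theorem~\ref{thm:leaves} gives fewer than $8k$ vertices of degree at most $1$ in $H:=G-\Theta^+$; and Theorems~\ref{thm:theta-connecting} and~\ref{thm:theta-isolated} bound the numbers of $\Theta$-connecting and $\Theta$-isolated $2$-connected endblocks of $H$ by $O(k^2)$ and $O(k^6)$. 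I would first note that no $2$-connected endblock $B$ of $H$ has $N_\Theta(B^+)=\emptyset$: such a $B$ would satisfy $N_{F(\Theta)}(B)=\emptyset$ (each vertex of $F(\Theta)$ is adjacent to $\Theta$), and as $B$ is an endblock with at most one cut-vertex $c$, every vertex of $B$ other than $c$ would have all its edges inside $B$, making $c$ a cut-vertex of $G$ -- impossible. Hence $H$ has $q:=O(k^6)$ endblocks in total, so its block tree is a forest with at most $q$ leaves; consequently the bridges and $2$-connected blocks of $H$ are organised into $O(q)$ \emph{block-paths}, each a maximal path-like sequence of bridges and $2$-connected blocks glued in series at cut-vertices, and every vertex of $H$ lies on some block-path. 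Moreover, a single path of $H$ can be routed along any given block-path so that it passes through every interior vertex of every stretch of consecutive bridges on it and, by Menger's theorem, through one prescribed interior vertex of each $2$-connected block on it (recall that in a cubic graph no vertex lies in two $2$-connected blocks).

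Next I would analyse how $P_1$ attaches to $H$. Each interior vertex $p$ of $P_1$ sends exactly one edge out of $\Theta$, say to $x(p)$; call $p$ \emph{good} if $x(p)\in H$. Distinct good vertices have distinct images, since a vertex of $H$ has at most one neighbour on $\Theta$. Only $O(k)$ interior vertices $p$ have $x(p)\in V(\Theta)$, or $x(p)$ $\Theta$-friendly, or $x(p)$ with two consecutive neighbours on $P_1$; every remaining non-good interior vertex $p$ has $x(p)\in F(\Theta)$ with exactly two neighbours on $P_1$, necessarily at distance $2$, so that $p$, the vertex $p'$ of $P_1$ lying strictly between those two neighbours, and $x(p)$ lie on a $4$-cycle together with two consecutive edges of $P_1$. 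For each such $4$-cycle the third edge at $p'$ leads to some vertex $y$, and a short case check shows $y\in H$ unless we are in one of $O(k)$ exceptional cases: otherwise $y$ would be a chord endpoint, a $\Theta$-friendly vertex, or a vertex with three neighbours on $P_1$ that are pairwise within distance $2$, the last forcing a $2$-cut of $G$. So, up to $O(k)$ exceptions, each such $4$-cycle wastes two interior vertices of $P_1$ but contributes one good vertex $p'$, and distinct $4$-cycles contribute distinct $p'$. Counting the at least $D-1$ interior vertices of $P_1$ in this way gives at least $\frac13(D-O(k))\geq \frac D4$ good vertices, hence at least $\frac D4$ distinct vertices of $H$ each joined by an edge to a distinct vertex of $P_1$; call these the \emph{attachment vertices}.

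Now I would exploit the path-like structure of $H$. Each attachment vertex has degree $2$ in $H$, hence lies either in the interior of a stretch of consecutive bridges or inside a single $2$-connected block; in either case it lies on one of the $O(q)$ block-paths. By averaging, some block-path $\mathcal P$ carries at least $\frac{D}{O(q)}$ attachment vertices. If at least $k$ of these lie on bridges of $\mathcal P$, the path of $H$ realising $\mathcal P$ passes through all of them. Otherwise at least $\frac{D}{O(q)}-k$ attachment vertices lie inside $2$-connected blocks of $\mathcal P$. If some block $B$ of $\mathcal P$ contains at least $3\cdot 2^{4k^2}$ attachment vertices, pick $2^{4k^2}$ of them forming an independent set in $B$ (the attachment vertices induce maximum degree $2$ in $B$) together with one incident $B$-edge each; these edges are distinct, so Theorem~\ref{thm:chord_adv} yields a path of $B$ through at least $k$ of them, hence through $k$ attachment vertices. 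Otherwise every $2$-connected block of $\mathcal P$ contains fewer than $3\cdot 2^{4k^2}$ attachment vertices, so at least $k$ of them contain one, and routing the path realising $\mathcal P$ through one attachment vertex in each of $k$ such blocks gives a path through $k$ attachment vertices. In every case we obtain a path $Q\subseteq H$, disjoint from $\Theta$, through at least $k$ attachment vertices; then $Q$ and the cycle $P_1\cup P_2$ are joined by $k$ pairwise disjoint edges, so they are $k$-close, and $G$ is $k$-good -- a contradiction. All thresholds used are at most $O(k^{7}2^{4k^2})$, well below $D$.

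The part I expect to require the most care is the analysis of the vertices of $F(\Theta)$ that are not $\Theta$-friendly: one has to show that these ``diamonds'' cannot screen the leg $P_1$ from $H$, which is precisely the assertion that the middle vertex of each diamond escapes into $H$. A secondary subtlety is that the $2$-connected blocks carrying the attachment vertices may be internal blocks rather than endblocks, so that Theorems~\ref{thm:theta-connecting} and~\ref{thm:theta-isolated} do not bound them directly; this is handled by the block-path decomposition together with Theorem~\ref{thm:chord_adv}.
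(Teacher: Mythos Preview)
Your overall strategy mirrors the paper's, but the diamond analysis contains a genuine gap. You claim that when $x(p)\in F(\Theta)$ has its two $P_1$-neighbours at distance~$2$, say $q_0$ and $q_2$, the middle vertex $p'=q_1$ has its third neighbour $y$ in $H$ except in $O(k)$ cases, and your list of exceptions for $y\notin H$ is: chord endpoint, $\Theta$-friendly, or three $P_1$-neighbours within distance~$2$. This list is incomplete. The vertex $y$ can itself lie in $F(\Theta)$, non-$\Theta$-friendly, with exactly two $P_1$-neighbours $q_1$ and $q_3$ (or $q_{-1}$); then $y$ has only two $P_1$-neighbours, so none of your three cases applies. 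Moreover this is not $O(k)$-rare: take $w_{2j}$ adjacent to $q_{4j},q_{4j+2}$ and $w_{2j+1}$ adjacent to $q_{4j+1},q_{4j+3}$ for $j=0,\ldots,m-1$. Then every $q_i$ in the stretch $q_0,\ldots,q_{4m-1}$ has $x(q_i)\in F(\Theta)$, every $4$-cycle's middle vertex has its escape also in $F(\Theta)$, and the third neighbours $z_i$ of the $w_i$ sit in $H$ at distance~$2$ from $P_1$, not distance~$1$. There is no obstruction to realising this in a $3$-connected cubic graph, so your count of $\geq D/4$ attachment vertices (defined as vertices of $H$ joined \emph{by an edge} to $P_1$) fails, and with it the final $k$-closeness via length-$1$ paths.

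The paper sidesteps this by working with $S=N_H(\Theta^+)$ from the start: it bounds $|E(\Theta^+,H)|$ directly from $|E(\Theta,G-\Theta)|$ and the isolated-vertex and isolated-edge estimates, obtaining $|S|>f(k)$ without ever attributing attachments to a single leg. Since $S$ includes vertices of $H$ adjacent to $F(\Theta)$, the interleaved-diamond configuration causes no loss; the final path is then $k$-close to some cycle $P_i\cup P_j$ via paths of length at most~$2$, which is exactly what the definition of $k$-close permits. Your block-path decomposition and the use of Theorem~\ref{thm:chord_adv} in the last step are a perfectly good alternative to the paper's $2$-colouring and minimal path-cover, and would go through once you enlarge your attachment vertices to include those at distance~$2$ through $F(\Theta)$ --- but as written the argument does not stand.
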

\begin{proof}
Let $f(k) = 10^9k^{13}2^{9k^2}$ and let $u,v\in V(G)$ such that $u$ and $v$ have distance at least $f(k)$ in $G$. Let $\Theta =(P_1,P_2,P_3)$ be a shortest $u,v$-$\theta$-graph in $G$ and $H=G-\Theta^+$. We may assume $k\geq 3$ since $\Theta$ is 2-good.
By Lemma~\ref{lem:easy_specialcases}~(a) we can assume that there are less than $\frac{3}{2}k$ edges in $G-E(\Theta)$ with both ends in $V(\Theta)$. Thus, we have 
\begin{align*}
|E(\Theta , G-\Theta)| > 3(f(k)-1)-3k\,.
\end{align*} 
Let $v_0$ and $e_0$ denote the number of isolated vertices and isolated edges in $G-\Theta$, respectively. By Lemma~\ref{lem:easy_specialcases}~(c) and (d) we may assume $v_0<3k$ and $e_0<3k$. 
Note that $|E(\Theta , F(\Theta))| = 2|E(F(\Theta), H)|+3v_0+4e_0 < 2|E(F(\Theta), H)| + 21k$, and thus
\begin{align*}
|E(F(\Theta ), H)| > \frac{1}{2}(|E(\Theta , F(\Theta))| - 21k)\,.
\end{align*} 
Since $|E(\Theta , G-\Theta)| = |E(\Theta , H)| + |E(\Theta , F(\Theta))|$, this implies
\begin{align*}
|E(\Theta^+,H)| & = |E(\Theta , H)| + |E(F(\Theta), H)|\\
                & > |E(\Theta , H)| + \frac{1}{2}(|E(\Theta , F(\Theta))| - 21k)\\
                & \geq \frac{1}{2}(|E(\Theta , G-\Theta)|-21k)\\
                & > \frac{3}{2}(f(k)-1)-12k\,.
\end{align*}
Let $S=N_H(\Theta^+)$ and let $s_{1}$ denote the number of vertices of degree at most 1 in $H$. Each vertex in $S$ has degree at most 1 in $H$ or it is incident with exactly one edge in $E(\Theta^+,H)$, so $|S|\geq |E(\Theta^+,H)|-2s_{1}$. By Theorem~\ref{thm:leaves} we may assume $s_{1} < 8k$.
Thus, 
$$|S|\geq |E(\Theta^+,H)|-16k > \frac{3}{2}f(k)-29k > f(k)\,.$$
By Theorem~\ref{thm:theta-connecting} and Theorem~\ref{thm:theta-isolated}, we may assume that the number of 2-connected endblocks in $H$ is less than $21k^2+\frac{3}{2}k+5700k^6$. By Theorem~\ref{thm:leaves} we can assume that there are less than $8k$ endblocks in $H$ which are not 2-connected. In total, $H$ has less than $10^4k^6$ endblocks and in particular also less than $10^4k^6$ components. Let $K$ be a component of $H$ for which $|K\cap S|$ is maximal.
Since $|S| > f(k)$, we have 
\begin{align}\label{eq:ks}
|K\cap S| > \frac{|S|}{10^4k^6} > \frac{f(k)}{10^4k^6} = 2^{9k^2}10^5k^7 > 3 \cdot 2^{9k^2}10^4k^7\,.
\end{align} 
Let $\B$ be the set of blocks of $K$ which contain a vertex in $S$. Since we can 2-colour the blocks of $K$ such that any two blocks of the same colour are disjoint, there exists a subset of pairwise disjoint blocks $\B'\subseteq \B$ with $|\B'|\geq \frac{1}{2}|\B|$.\\
Suppose there exists a block $B$ in $H$ containing at least $2^{9k^2}$ vertices of $S$. By Theorem~\ref{thm:chord_adv}, there exists a path $P$ in $B$ containing at least $\frac{3}{2}k$ vertices in $S$. Now $P$ is $k$-close to one of the cycles $P_1\cup P_2$, $P_1\cup P_3$, or $P_2\cup P_3$. Thus we may assume that each block contains less than $2^{9k^2}$ vertices of $S$. Together with~(\ref{eq:ks}) this implies $|\B|\geq 3\cdot 10^4k^7$.\\
Let $\PP$ be a minimal collection of paths in $K$ such that at least one edge of each block in $\B'$ is contained in some path of $\PP$. It is easy to see that $\PP$ contains at most as many paths as $K$ has endblocks, thus $|\PP|<10^4k^6$. Thus, there exists a path $P\in \PP$ such that $P$ contains edges of at least 
$$\frac{|\B'|}{|\PP|} > \frac{\frac{1}{2}\cdot 3\cdot 10^4k^7}{10^4k^6} = \frac{3}{2}k$$ 
different blocks in $\B'$. We can modify $P$ so that it contains a vertex of $S$ in each of these blocks. Now $P$ is $k$-close to one of the cycles $P_1\cup P_2$, $P_1\cup P_3$, or $P_2\cup P_3$.
\end{proof}

The following corollary is an immediate application of Theorem~\ref{thm:main-diam} which concludes the proof of Theorem~\ref{thm:main}.

\begin{corollary}
Let $G$ be a cubic 3-connected graph and $m$, $k$ natural numbers with $k$ odd, and $f(k)= 2^{10^6k^{16}}$. If $|V(G)|\geq 3\cdot 2^{f(k)}$ then $G$ contains a cycle whose length is congruent to $m$ modulo $k$.
\end{corollary}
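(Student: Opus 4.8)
The plan is to derive the corollary by combining Theorem~\ref{thm:main-diam} with Theorem~\ref{thm:kgood}, after observing that a large vertex count forces a large diameter in a cubic graph. First I would dispose of the trivial case $k=1$ separately: then every cycle has length congruent to $m$ modulo $1$, and a $3$-connected cubic graph certainly contains a cycle. So from now on assume $k\geq 3$.

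Next I would bound the diameter from below. Since $G$ is cubic, the set of vertices within distance $D$ of any fixed vertex has size at most $1+\sum_{i=1}^{D}3\cdot 2^{i-1}=3\cdot 2^{D}-2$ (this is the same count that appears in the proof of Theorem~\ref{thm:chord_adv}). Consequently, if $|V(G)|\geq 3\cdot 2^{f(k)}$, then not all vertices of $G$ can lie within distance $f(k)$ of a fixed vertex, so the diameter of $G$ is at least $f(k)$.

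Then I would invoke Theorem~\ref{thm:main-diam} with the parameter $162k^{8}$ in place of $k$ (note that $162k^{8}\geq 3$ and that $G$ remains $3$-connected and cubic): it asserts that a $3$-connected cubic graph of diameter at least $10^{9}(162k^{8})^{13}\,2^{9(162k^{8})^{2}}$ is $162k^{8}$-good. It therefore suffices to verify the numerical inequality
\[
f(k)=2^{10^{6}k^{16}}\;\geq\;10^{9}(162k^{8})^{13}\,2^{9(162k^{8})^{2}}.
\]
Taking logarithms base $2$, the exponent on the right equals $9\cdot 162^{2}k^{16}+13\log_{2}(162k^{8})+\log_{2}(10^{9})<2.4\cdot 10^{5}k^{16}+O(\log k)$, which is comfortably smaller than $10^{6}k^{16}$ for every $k\geq 3$. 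Hence $G$ is $162k^{8}$-good, and Theorem~\ref{thm:kgood} yields a cycle of length congruent to $m$ modulo $k$; this also completes the proof of Theorem~\ref{thm:main}.

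The substantive work is entirely contained in Sections~2--5; the only point in this final step that needs a moment's care is checking that the polynomial overhead $10^{9}(162k^{8})^{13}$, which contributes a factor $k^{104}$ before one takes logarithms, is absorbed by the slack between $9\cdot 162^{2}k^{16}$ and $10^{6}k^{16}$. After taking logarithms this overhead becomes only an additive $O(\log k)$ term, so there is no real obstacle, and choosing $f(k)=2^{10^{6}k^{16}}$ with plenty of room to spare makes the argument go through for all odd $k$.
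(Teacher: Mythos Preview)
Your proof is correct and follows essentially the same route as the paper: bound the diameter below from the vertex count via the standard ball-growth estimate, apply Theorem~\ref{thm:main-diam} with parameter $162k^{8}$ after checking the numerical inequality, and finish with Theorem~\ref{thm:kgood}. The only cosmetic differences are that you treat $k=1$ explicitly and verify the inequality by taking logarithms rather than by the paper's splitting $2^{10^{6}k^{16}}=2^{\frac{1}{2}10^{6}k^{16}}\cdot 2^{\frac{1}{2}10^{6}k^{16}}$ and invoking Bernoulli.
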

\begin{proof}
We may assume $k\geq 3$. The diameter of $G$ is at least $f(k)$. Note that $162^{13}<10^{29}$. By Bernoulli's inequality we have
\begin{align*}
    2^{\frac{1}{2}10^6k^{16}} =  \left(2^{\frac{1}{2}10^5k^{16}}\right)^{10} > \left(\frac{1}{2}10^5k^{16}\right)^{10} > 10^{46}k^{160} >  10^9(162k^8)^{13}\,.
\end{align*}
Since $\frac{1}{2}10^6 > 9\cdot (162)^2$, we have
\begin{align*}
    f(k) = 2^{\frac{1}{2}10^6k^{16}}\cdot 2^{\frac{1}{2}10^6k^{16}} > 10^9(162k^8)^{13}2^{9(162k^8)^2}\,.
\end{align*}
Thus, $G$ is $(162k^8)$-good by Theorem~\ref{thm:main-diam}. By Theorem~\ref{thm:kgood}, $G$ has a cycle whose length is congruent to $m$ modulo $k$. 
\end{proof}

\section{Counterexamples for 2-connected cubic graphs}

In some cases weaker conditions suffice to show that a graph contains a cycle whose length is congruent to $m$ modulo $k$. This is in particular true for small values of $k$. For example, Chen and Saito~\cite{chensaito} proved that every graph of minimum degree 3 contains a cycle whose length is divisible by 3. However, in this section we construct families of graphs which show that in general Theorem~\ref{thm:main} cannot be extended to 2-connected cubic graphs. The graphs we construct consist of two disjoint copies of a small graph that are joined by a so-called cross-ladder which is defined as follows.

\begin{definition}[Cross-ladder]
Let $N \geq 1$ be a natural number. A \textbf{cross-ladder} of length $3N$ is a graph consisting of a path $P=u_0u_1 \ldots u_{3N-1}u_{3N}v_{3N}v_{3N-1}\ldots v_1v_0$ and edges $u_{3i}v_{3i}$, $u_{3i+1}v_{3i+2}$, $u_{3i+2}v_{3i+1}$ for every $i\in \{0,\ldots ,N-1\}$.
\end{definition}

Note that every cross-ladder is a 2-connected subcubic graph. Each cross-ladder contains precisely four vertices of degree 2 ($u_0$, $v_0$, $u_{3N}$, and $v_{3N}$) and they induce a matching in the cross-ladder. We now define an operation that allows us to connect two disjoint graphs using a cross-ladder.

\begin{definition}[$G_1 \otimes_{N} G_2$]
Let $N\geq 1$ be a natural number, $G_1$ and $G_2$ 2-connected graphs with $x_i,y_i\in V(G_i)$ such that $d(x_i)=d(y_i)=2$ and $d(v) = 3$ for $i\in \{1,2\}$ and $v\in V(G_i)\setminus\{x_i,y_i\}$. Let $L$ be a cross-ladder of length $3N$ and $u_0,v_0,u_{3N},v_{3N}\in V(L)$ distinct vertices of degree 2 such that $u_0v_0$, $u_{3N}v_{3N}\in E(L)$. We define $G_1 \otimes_{N} G_2$ as the graph we obtain from the disjoint union of $G_1$, $G_2$, and $L$ by adding the edges $x_1u_0$, $y_1v_0$, $x_2u_{3N}$, and $y_2v_{3N}$.
\end{definition}

\begin{figure}[b]
    \centering
    \includegraphics[width=\textwidth]{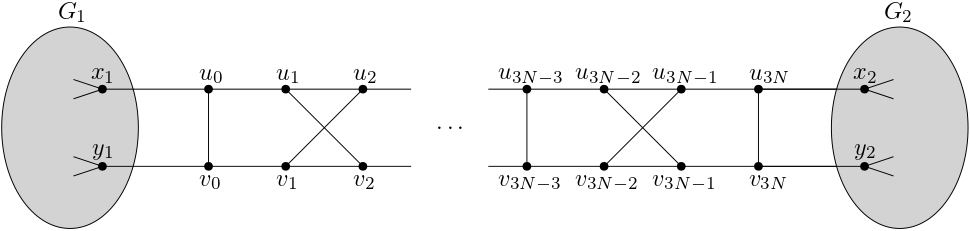}
    \caption{$G_1 \otimes_{N} G_2$}
    \label{fig:crossladder_G_1_G_2}
\end{figure}

It is easy to see that $G_1 \otimes_{N} G_2$ is a cubic 2-connected graph, see Figure~\ref{fig:crossladder_G_1_G_2}. Moreover, if $G_1+x_1y_1$ and $G_2+x_2y_2$ are planar, then also $G_1 \otimes_{N} G_2$ is planar. The following lemma shows that under some mild assumption on $G$ we can control the cycle lengths which are divisible by 3 in $G\otimes_{N} G$.

\begin{lemma}\label{lem:counterex-cyclelengths}
Let $N\geq 1$ be a natural number and $G$ a 2-connected graph with $x,y\in V(G)$ such that $d(x)=d(y)=2$ and $d(v) = 3$ for $v\in V(G)\setminus\{x,y\}$. Suppose that $G$ contains no $x-y$ path whose length is divisible by 3. Let $G' = G\otimes_N G$ and let $G_1$ and $G_2$ denote the two disjoint copies of $G$ in $G'$. If $C$ is a cycle in $G'$ whose length is divisible by 3, then either $C$ is a cycle in $G_1$ or $G_2$, or $C$ has length $6N+4+p_1+p_2$ where $p_1$ and $p_2$ are lengths of $x-y$ paths in $G$.
\end{lemma}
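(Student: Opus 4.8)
The plan is to track how $C$ meets the two $2$-edge-cuts $\{x_1u_0,y_1v_0\}$ and $\{x_2u_{3N},y_2v_{3N}\}$ of $G'$ separating $V(G_1)$ and $V(G_2)$ from the rest of the graph, and then to reduce everything to a length computation inside the cross-ladder $L$. A cycle meets any edge-cut in an even number of edges, so $C$ uses $0$ or $2$ edges of each of these cuts. If $C$ uses neither edge of the cut at $G_i$, then $V(C)$ is contained in $V(G_i)$ or is disjoint from $V(G_i)$; if $C$ uses both, then $C\cap G_i$ is a single $x_i$--$y_i$ path in $G_i$, whose length is therefore one of the values $p_i$ in the statement. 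Hence, unless $C$ is a cycle of $G_1$ or of $G_2$, exactly one of the following holds: (A) $C$ uses both cuts, so $C$ consists of an $x_1$--$y_1$ path $Q_1$, an $x_2$--$y_2$ path $Q_2$, the four cut-edges, and $R:=C\cap L$; (B) $C$ uses only the cut at $G_1$, so $C$ consists of $Q_1$, the edges $x_1u_0,y_1v_0$, and a $u_0$--$v_0$ path $R$ in $L$; (C) the mirror image of (B) using $u_{3N},v_{3N}$; (D) $C\subseteq L$.

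In case (A), since $C$ is a single cycle, $R$ is the union of two vertex-disjoint paths, each with one end in $\{u_0,v_0\}$ and one in $\{u_{3N},v_{3N}\}$. For $j\in\{1,\dots,N-1\}$ the pair $\{u_{3j}u_{3j+1},v_{3j}v_{3j+1}\}$ is a $2$-edge-cut of $L$, and the two ends of each path lie on opposite sides of it, so each path uses an odd number of its edges, hence (the cut has only two edges) exactly one each. Thus inside each of the $N$ blocks of the cross-ladder the two paths realise a disjoint linkage between its two pairs of ports, and a finite check of the (few) such linkages shows each uses exactly $6$ edges of the block --- any detour would force one path to occupy a port needed by the other. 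Therefore $|R|=6N$ and $|C|=6N+4+p_1+p_2$, with no use of the divisibility hypothesis.

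To dispose of cases (B), (C), (D) it suffices to prove that every $u_0$--$v_0$ path and every $u_{3N}$--$v_{3N}$ path in $L$ has length $\equiv 1\pmod 3$, and that every cycle contained in $L$ has length $\not\equiv 0\pmod 3$. Given these, case (D) is immediate, and in case (B) we get $|C|=p_1+2+|R|\equiv p_1\pmod 3$ while $p_1\not\equiv 0\pmod 3$ because $Q_1$ is an $x$--$y$ path of $G$, contradicting $3\mid|C|$; case (C) is symmetric. The engine for these claims is the $3$-colouring $\phi\colon V(L)\to\mathbb Z/3\mathbb Z$ with $\phi(u_j)=j$ and $\phi(v_j)=1-j$: the two ends of every edge get consecutive residues, so orienting each edge towards the larger residue (cyclically), the length of a walk is congruent mod $3$ to twice its number of backward edges, and $\phi(v_0)-\phi(u_0)=1$. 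Moreover every crossing rung points from the $u$-rail to the $v$-rail, so $L$ is acyclic under this orientation (a directed path runs forward along the $u$-rail, crosses once, and runs forward down the $v$-rail to the unique sink $v_0$); a walk can reverse direction only at a $u$-vertex or a $v$-vertex; and $u_0$ is a source, $v_0$ a sink. These structural constraints pin down the length mod $3$ of the relevant paths (one checks that a $u_0$--$v_0$ path cannot sustain a backward step and hence is a directed path of length $\equiv 1\pmod 3$) and of the cycles in $L$; the $u_{3N}$--$v_{3N}$ statement then follows from the automorphism $u_j\mapsto v_{3N-j},\ v_j\mapsto u_{3N-j}$ of $L$.

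I expect this last step --- the precise behaviour mod $3$ of paths and cycles inside the cross-ladder --- to be the main obstacle, because the crossing rungs let a path double back before it is forced into a dead end, so the orientation argument needs some bookkeeping to close. The safest way to make it rigorous is induction on $N$ via the $2$-vertex-cuts $\{u_{3j},v_{3j}\}$, which exhibit $L$ as $N$ copies of a fixed $8$-vertex gadget glued along rung edges; then all four congruence facts needed above (for $u_0$--$v_0$ paths, for $u_{3N}$--$v_{3N}$ paths, for cycles in $L$, and for the pass-through linkage length used in case (A)) reduce to a finite case check on one gadget together with the inductive hypothesis.
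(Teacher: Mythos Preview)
Your case analysis via the two $2$-edge-cuts is correct and matches the paper's argument; you in fact supply justifications the paper leaves implicit (the paper simply asserts that $H$ has no cycle of length divisible by~$3$, that every $x_1$--$y_1$ path in $H$ has length $\equiv 0\pmod 3$, and that $|E(C)\cap E(H)|=6N+4$ when $C$ meets both copies). Your case~(A) argument that the two disjoint $\{u_0,v_0\}$--$\{u_{3N},v_{3N}\}$ paths have total length exactly $6N$ is correct.

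There is, however, one false step in your orientation argument for cases (B), (C), (D): the claim that ``a $u_0$--$v_0$ path cannot sustain a backward step and hence is a directed path'' is wrong already for $N=1$. The path $u_0u_1v_2v_3u_3u_2v_1v_0$ has length~$7$ and traverses three edges against your orientation (namely $v_2v_3$, $v_3u_3$, $u_3u_2$). The $3$-colouring $\phi$ only yields $|P|\equiv 1-(\text{\# backward})\pmod 3$, so you still need that the number of backward edges is divisible by~$3$, and acyclicity of the orientation alone does not force this.

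Your proposed remedy --- induction on $N$ via the $2$-vertex-cuts $\{u_{3j},v_{3j}\}$ --- does work. A $u_0$--$v_0$ path either stays in the first $8$-vertex block (direct check: lengths $1,4,7$), or it crosses $\{u_3,v_3\}$, in which case it decomposes as a disjoint $\{u_0,v_0\}$--$\{u_3,v_3\}$ linkage in the first block (total length exactly~$6$, by the same finite check you use in case~(A)) together with a $u_3$--$v_3$ path in a cross-ladder of length $3(N-1)$ (length $\equiv 1$ by induction). The cycle claim follows the same way: a cycle crossing $\{u_3,v_3\}$ splits into a $u_3$--$v_3$ path in the first block (length $\equiv 1$ by the obvious end-swapping automorphism of the block) and a $u_3$--$v_3$ path in the shorter ladder, giving length $\equiv 2\pmod 3$. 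So drop the directed-path claim and run the induction instead.
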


\begin{proof}
Let $x_1,y_1\in V(G_1)$ and $x_2,y_2\in V(G_2)$ denote the vertices corresponding to $x$ and $y$ in $G$.
Let $H$ be the subgraph of $G'$ which is induced by the copy of the cross-ladder of length $3N$ and the vertices $x_1,y_1,x_2$, and $y_2$. Note that $H$ contains no cycles of length divisible by 3, so we can assume $C$ contains a vertex in at least one of $G_1$ and $G_2$, say $G_1$. 
We may assume $C$ is not a cycle in $G_1$, so it contains both $x_1$ and $y_1$. Note that the length of every $x_1-y_1$ path in $H$ is congruent to 0 modulo 3. 
If $C$ is contained in $G_1\cup H$, then 
$$|E(C)| = |E(C) \cap E(G_1)|+|E(C) \cap E(H)| \equiv |E(C)\cap E(G_1)| \not\equiv 0 \mod{3}\,,$$
since $|E(C)\cap E(G_1)|$ is the length of an $x-y$ path in $G$. Thus we can assume that $C$ also contains vertices in $G_2$. 
Now it is easy to see that $|E(C) \cap E(H)|=6N+4$. Let $p_1 = |E(C)\cap E(G_1)|$ and $p_2 =|E(C)\cap E(G_2)|$. It follows that $|E(C)|= 6N+4+p_1+p_2$. 
\end{proof}

Note that in Lemma~\ref{lem:counterex-cyclelengths}, both $p_1$ and $p_2$ are lengths of $x-y$ paths in $G$ and thus not divisible by 3. Since $|E(C)|$ is divisible by 3, we have $p_1 + p_2 \equiv 2$ modulo 3 and both $p_1$ and $p_2$ are congruent to 1 modulo 3.

\begin{figure}[h]
    \centering
      \begin{subfigure}[b]{0.24\textwidth}
        \includegraphics[width=\textwidth]{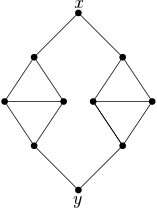}
        \caption{$H_1$}
        \label{fig:crossladder_end_1}
    \end{subfigure}
    \hspace{2cm}
    \begin{subfigure}[b]{0.16\textwidth}
        \includegraphics[width=\textwidth]{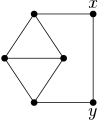}
        \caption{$H_2$}
        \label{fig:crossladder_end_2}
    \end{subfigure}
    \hspace{2cm}
     \begin{subfigure}[b]{0.31\textwidth}
        \includegraphics[width=\textwidth]{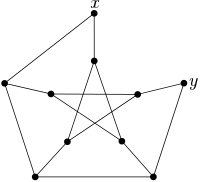}
        \caption{$H_3$}
        \label{fig:crossladder_end_petersen}
    \end{subfigure}
    \caption{Three different building blocks used in the proof of Theorem~\ref{thm:2-con_counter}}
    \label{fig:crossladder_three_graphs}
\end{figure}

\begin{theorem} \label{thm:2-con_counter}
Let $m$, $k$, and $N$ be natural numbers with $k\geq 12$. If $m$ and $k$ are divisible by 3, then there exists a 2-connected cubic graph $G$ with at least $N$ vertices such that no cycle of $G$ has length congruent to $m$ modulo $k$.
\end{theorem}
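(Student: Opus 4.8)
The plan is to build $G$ from two copies of a small \emph{building block} joined by a cross-ladder, and to control its cycle lengths via Lemma~\ref{lem:counterex-cyclelengths}. First I would note that since $3\mid m$ and $3\mid k$, every cycle whose length is congruent to $m$ modulo $k$ has length divisible by $3$; so it is enough to find a $2$-connected cubic graph $G$ with at least $N$ vertices in which no cycle of length divisible by $3$ is congruent to $m$ modulo $k$. I would then fix a short finite list of building blocks $H_1,H_2,H_3$ — each a $2$-connected graph with exactly two vertices $x,y$ of degree $2$, every other vertex of degree $3$, and no $x-y$ path of length divisible by $3$ — chosen so that, for every admissible pair $(m,k)$, at least one of them is usable in the construction below. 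The block $H_3$ is obtained from the Petersen graph by deleting an edge, with $x,y$ its endpoints; $H_1$ and $H_2$ are the small graphs of Figure~\ref{fig:crossladder_three_graphs}.

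Given $(m,k)$ I would pick a suitable block $H$ and set $G=H\otimes_{N'}H$ for a parameter $N'\ge 1$ to be chosen; by the remarks following the definition of $\otimes_N$ the graph $G$ is $2$-connected and cubic, and $|V(G)|=2|V(H)|+6N'+2$, so $|V(G)|\ge N$ as soon as $N'$ is large enough. Let $G_1,G_2$ be the two copies of $H$ inside $G$. By Lemma~\ref{lem:counterex-cyclelengths}, a cycle $C$ of $G$ with $3\mid|E(C)|$ is either a cycle of $G_1$ or of $G_2$, or has length $6N'+4+p_1+p_2$ with $p_1,p_2$ lengths of $x-y$ paths in $H$; and by the remark after that lemma $p_1\equiv p_2\equiv 1\pmod 3$. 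Hence I only have to arrange two things: $(a)$ no cycle of $H$ whose length is divisible by $3$ is congruent to $m$ modulo $k$, and $(b)$ $6N'+4+p_1+p_2\not\equiv m\pmod k$ for all $p_1,p_2$ in the set $\mathcal P_1$ of lengths of $x-y$ paths in $H$ that are congruent to $1$ modulo $3$.

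For each block I would determine $\mathcal P_1$ and the set of cycle lengths exactly. For $H_3$ this rests on three facts about the Petersen graph: it has girth $5$, it contains no $7$-cycle, and it is non-Hamiltonian; together these force every $x-y$ path of $H_3$ to have length in $\{4,5,6,7,8,9\}$ with none divisible by $3$, so that $\mathcal P_1=\{4,7\}$, while the cycles of $H_3$ have lengths in $\{5,6,8,9\}$. Condition $(b)$ then says that $6N'$ must avoid the finite set $\{\,m-4-p_1-p_2\bmod k : p_1,p_2\in\mathcal P_1\,\}$; since $6N'\bmod k$ can be taken to be any element of the cyclic subgroup $\langle 6\rangle\le\mathbb Z/k\mathbb Z$ (and $N'$ can simultaneously be made as large as needed), this is possible provided the forbidden set does not exhaust $\langle 6\rangle$. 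An elementary computation shows that $H_3$ handles all residues with $m\not\equiv 6,9\pmod k$, and $H_1$, $H_2$ are tailored to cover the finitely many remaining residues, using $k\ge 12$. Combining the cases gives the theorem.

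The main obstacle I anticipate is the case analysis in the last step: one must check that the three building blocks, together with the freedom in choosing $N'$, really do cover every admissible $(m,k)$ with $k\ge 12$, which amounts to computing precisely which residues modulo $k$ each block can handle. Determining the path-length spectrum of $H_3$ from the absence of short cycles, $7$-cycles, and Hamiltonian cycles in the Petersen graph, and verifying the analogous but easier statements for $H_1$ and $H_2$, is routine but needs care, as does the bookkeeping that shows the forbidden residue set never covers all of $\langle 6\rangle$ once the right block has been chosen.
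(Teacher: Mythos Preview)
Your overall strategy is exactly the paper's: build $G=H\otimes_{N'}H$ from a small block $H$ with two degree-$2$ vertices and no $x$--$y$ path of length divisible by $3$, then use Lemma~\ref{lem:counterex-cyclelengths} to reduce to controlling (a) the cycle lengths of $H$ divisible by $3$ and (b) the finitely many long-cycle lengths $6N'+4+p_1+p_2$. Your analysis of $H_3$ is also correct: the $x$--$y$ path lengths are $\{4,5,7,8\}$, so $\mathcal P_1=\{4,7\}$, and the relevant long-cycle lengths are $6N'+12,\,6N'+15,\,6N'+18$.

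However, your final case split is wrong. You assert that $H_3$ handles every $m\not\equiv 6,9\pmod k$, but take $k=12$, $m=0$. Then $\langle 6\rangle=\{0,6\}\subset\mathbb Z/12\mathbb Z$, and the forbidden values for $6N'$ are $m-12\equiv 0$, $m-15\equiv 9$, $m-18\equiv 6$; intersecting with $\langle 6\rangle$ gives $\{0,6\}$, which \emph{is} all of $\langle 6\rangle$. Concretely, for $N'$ even the long cycles hit $0\pmod{12}$ via $6N'+12$, and for $N'$ odd via $6N'+18$. So $H_3$ cannot serve as the main workhorse.

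The paper's remedy is simply a different assignment of blocks to residues: $H_1$ is used for all $m\notin\{3,9\}$ (for $H_1$ one has $\mathcal P_1=\{4\}$, so the only long-cycle length divisible by $3$ is $6N'+12$, and a single residue is easy to avoid since $|\langle 6\rangle|\ge 2$ for $k\ge 12$), while $H_2$ covers $m=9$ and $H_3$ covers $m=3$, with $N'$ chosen explicitly in each case. Your argument becomes correct once you swap the roles of $H_1$ and $H_3$ in the case analysis.
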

\begin{proof}
We may assume $m\in \{0,\ldots ,k-1\}$. We define a number $N'$ and a graph $G$ depending on $k$, $m$, and $N$ as follows.
\begin{itemize}
    \item If $m \notin \{3,9\}$, let $N' \geq N$ be a natural number such that $6N'+12 \not\equiv m \mod{k}$ and let $G=H_1$ be the graph in Figure~\ref{fig:crossladder_three_graphs}(a).
    \item If $m =9$, let $N' \geq N$ be a natural number such that $N' \equiv 1$ (mod $k$) and let $G=H_2$ be the graph in Figure~\ref{fig:crossladder_three_graphs}(b).
    \item If $m=3$, let $N' \geq N$ be a natural number such that $N' \equiv -1$ (mod $k$) and let $G=H_3$ be the Petersen Graph with one edge removed, see Figure~\ref{fig:crossladder_three_graphs}(c).
\end{itemize}
Let $x$ and $y$ denote the two vertices of degree 2 in $G$. Note that in each case there is no $x-y$ path in $G$ whose length is divisible by 3.
Let $G' = G\otimes_{N'}G$ and let $G_1$, $G_2$ denote the two copies of $G$ in $G'$. Let $C$ be a cycle in $G'$ whose length is divisible by 3. We distinguish between the three cases above. \\

\textbf{Case 1:} $m \notin \{3,9\}$.\\
The only cycles of length divisible by 3 in $H_1$ have length 3 or 9. Thus, if $C$ is contained in $G_1$ or $G_2$, then $C$ has length 3 or 9. By Lemma~\ref{lem:counterex-cyclelengths}, we may assume that $|E(C)|=6N'+4+p_1+p_2$ where $p_1,p_2\in \{4,5\}$. Since $|E(C)|$ is divisible by 3, it follows that $|E(C)|=6N'+12$, so the length of $C$ is not congruent to $m$ modulo $k$ by the choice of $N'$.  \\

\textbf{Case 2:} $m=9$.\\
If $C$ is contained in $G_1$ or $G_2$, then $C$ has length 3 or 6. By Lemma~\ref{lem:counterex-cyclelengths}, we may assume that $|E(C)|=6N'+4+p_1+p_2$ where $p_1,p_2\in \{1,4,5\}$. Since $|E(C)|$ is divisible by 3, we have $|E(C)|\in \{6N'+6, 6N'+9, 6N'+12\}$. Thus $|E(C)|$ is congruent to 12, 15, or 18 modulo $k$. Since $k\geq 12$, the length of $C$ is not congruent to 9 modulo $k$.\\

\textbf{Case 3:} $m=3$.\\
Since $H_3$ has no cycle whose length is congruent to 3 modulo $k$, we can assume by Lemma~\ref{lem:counterex-cyclelengths} that $|E(C)|=6N'+4+p_1+p_2$ where $p_1,p_2\in \{4,5,7,8\}$. Thus $|E(C)|\in\{6N'+12, 6N'+15, 6N'+18\}$ and $|E(C)|$ is congruent to 6, 9, or 12 modulo $k$. Since $k\geq 12$, the length of $C$ is not congruent to 3 modulo $k$.
\end{proof}

We conclude this paper with the following open problem.

\begin{question}
For which natural numbers $m$ and $k$ does every sufficiently large 2-connected cubic graph contain a cycle whose length is congruent to $m$ modulo $k$?
\end{question}

\bibliographystyle{amsplain}


\begin{aicauthors}
\begin{authorinfo}[kasper]
  Kasper Szabo Lyngsie\\
  Department of Applied Mathematics and Computer Science\\
  Technical University of Denmark\\
  Lyngby, Denmark\\
  kasperszabo\imageat{}hotmail\imagedot{}com
\end{authorinfo}
\begin{authorinfo}[martin]
  Martin Merker\\
  Department of Applied Mathematics and Computer Science\\
  Technical University of Denmark\\
  Lyngby, Denmark\\
  merkermartin\imageat{}gmail\imagedot{}com
\end{authorinfo}
\end{aicauthors}

\end{document}